\def\N{\mathbb N}
\def\R{\mathbb R}
\def\C{\mathbb C}
\def\D{\mathcal D}
\def\CC{\mathcal C}\def\O{\mathcal O}
\def\E{\mathcal E}
\def\G{\mathcal G}
\def\U{\mathcal U}
\def\O{\mathcal O}
\def\M{\mathcal M}
\def\eps{\varepsilon}
\DeclareMathOperator{\id}{id}
\DeclareMathOperator{\WF}{WF}
\DeclareMathOperator{\real}{Re}
\DeclareMathOperator{\imag}{Im}
\DeclareMathOperator{\supp}{supp}
\DeclareMathOperator{\Char}{Char}
\theoremstyle{plain}
\newtheorem{Thm}{Theorem}[section]
\newtheorem{Prop}[Thm]{Proposition}
\newtheorem{Lem}[Thm]{Lemma}
\newtheorem{Cor}[Thm]{Corollary}
\theoremstyle{definition}
\newtheorem{Def}[Thm]{Definition}
\newtheorem{Ex}[Thm]{Example}
\newtheorem{Rem}[Thm]{Remark}
\numberwithin{equation}{section}
\begin{document}
	\title{Geometric Microlocal Analysis in Denjoy-Carleman Classes}
	\author{Stefan F{\"u}rd{\"o}s}
	\email{furdos@math.muni.cz}
	\address{Department of Mathematics and Statistics, Masaryk University, Brno, Czech Republic}
\begin{abstract}
A systematic geometric theory for the ultradifferentiable (non-quasianalytic and quasianalytic) wavefront
set similar to the well-known theory in the classic smooth and analytic setting is developed. In particular
an analogue of Bony’s Theorem and the invariance of the ultradifferentiable wavefront set under
diffeomorphisms of the same regularity is proven using a Theorem of Dyn’kin about the almost-analytic
extension of ultradifferentiable functions. Furthermore we prove a microlocal elliptic regularity theorem
for operators defined on ultradifferentiable vector bundles. As an application we show that Holmgren’s
theorem and several generalizations hold for operators with quasianalytic coefficients.
\end{abstract}
\keywords{ultradifferentiable wavefront set, Denjoy-Carleman classes, Bony's theorem,
quasianalytic uniqueness theorems}
\subjclass[2010]{35A18; 26E10; 35A30; 35A02}
	\maketitle
\section{Introduction}
The aim of this work is to establish a geometric theory for the wavefront set in ultradifferentiable classes introduced by H\"ormander \cite{MR0294849}
analogous to the one for the classical wavefront set.
There are a number of 
recent works dealing with this question, see Adwan-Hoepfner \cite{MR2718658}, Hoepfner-Medrado \cite{Hoepfner-MedradoPreprint}. In this paper we present a unified
approach to the problem, which also allows us to treat quasianalytic classes,
which the methods introduced up to now were not able to cover.

Regarding questions of the regularity of solutions of PDEs the wavefront set is a crucial notion introduced by Sato \cite{zbMATH03331746} in the analytic category and 
by H\"ormander \cite{MR0513000} in the smooth case. 
Their refinement of the singular support simplifies for example the proof of the classical elliptic regularity theorem considerably.

One of the basic features of both the smooth and analytic wavefront sets is that they are invariant under smooth and real-analytic changes of coordinates, respectively.
Hence it is possible to define the smooth (or analytic) wavefront of a distribution given on a smooth (or analytic) manifold. 
This is mainly due to the fact the smooth resp.\ analytic wavefront set  can either be described by the Fourier transform (H\"ormander's approach), 
boundary values of almost analytic resp.\ holomorphic functions (Sato's definiton)
or by the FBI transform (due to Bros and Iagolnitzer \cite{MR0399494}). The proof of the equivalence of these descriptions in the analytic category is due to Bony \cite{MR0650834}.

Various other notions of wavefront sets associated to microlocalizable structures have since then been  introduced, e.g.\ for Sobolov spaces, c.f.\ e.g.\ \cite{Lerner2010}.
In this paper we 
are interested in ultradifferentiable classes, that is spaces of smooth functions which
include strictly all real analytic functions. The most well known example of such classes are the Gevrey classes, see e.g.\
Rodino \cite{MR1249275}.

Generally spaces of ultradifferentiable functions are  defined by putting growth conditions either on the derivatives or the Fourier transform of its elements.
One family of ultradifferentiable classes, which includes the Gevrey classes,
is the category of Denjoy-Carleman classes. 
The elements of a Denjoy-Carleman class satisfy generalized Cauchy estimates of the form 
\begin{equation*}
\bigl\lvert \partial^\alpha f(x)\bigr\rvert\leq C h^{\lvert\alpha\rvert}M_{\lvert\alpha\rvert} 
\end{equation*}
on compact sets, where $C$ and $h$ are constants independent of $\alpha$ and $\M=(M_j)_j$ is a sequence of positive real numbers, 
the 
weight sequence associated to the Denjoy-Carleman class.
Such classes of smooth functions were first investigated by Borel and Hadamard, 
but were named after Denjoy and Carleman who characterized independently the  quasianalyticity of such a class using its weight sequence, 
c.f.\ the survey \cite{MR2384272} by Thilliez.

There is a rich literature concerning the Denjoy-Carleman classes and their properties.
It turns out that conditions on the weight sequence translate to stability conditions of the associated class.
For example, if $\M$ is a regular weight sequence in the sense of Dyn'kin
 \cite{zbMATH03751341}, 
then it is known that the Denjoy-Carleman class is closed under composition, solving ordinary differential equations and that the implicit function theorem holds in the class, 
c.f.\ e.g.\ Bierstone-Milman \cite{MR2061220}. 
Hence it makes sense in this situation to consider manifolds of Denjoy-Carleman type.
In section \ref{sec:DC} we summarize the facts about weight sequences and
 Denjoy-Carleman classes that are needed in the remainder of the paper.
We note also that it is possible to generalize Nagano's theorem \cite{MR0199865} to orbits of quasianalytic vector fields.

There have been several attempts to define wavefront sets with respect to Denjoy-Carleman classes, see e.g.\ Komatsu \cite{MR1178558} and Chung-Kim \cite{MR1492944}. 
But the most widereaching definition of an ultradifferentiable wavefront set both with respect to the conditions imposed on the weight sequence and scope of achieved results 
was given by H\"ormander \cite{MR0294849}.
Due to the relatively weak conditions that he imposed on the weight sequence 
H\"ormander was only able to define the 
ultradifferentiable wavefront set $\WF_\M u$ of distributions $u$ on real-analytic manifolds but not distributions defined on general
ultradifferentiable manifolds. H\"ormander's results are reviewed in section \ref{SecWF}.

The main result we need in order to proceed is a theorem of Dyn'kin \cite{zbMATH03751341}. 
He showed that for regular weight sequences each function in a regular Denjoy-Carleman 
class has an almost-analytic extension, whose $\bar{\partial}$-derivative satisfies
 near $\imag z=0$ a certain exponential decrease in terms of the weight sequence. 
We apply this result and several statements of H\"ormander \cite{MR1996773} in section \ref{sec:BV} to prove that 
the Denjoy-Carleman wavefront set can be characterized by such $\M$-almost-analytic extensions. 
Using this characterization it is possible to modify H\"ormander's proof 
of the  invariance of the wavefront set 
in the real-analytic case to show that in our situation
the ultradifferentiable wavefront set for distributions on Denjoy-Carleman manifolds can be well defined. 

In section \ref{sec:Bony} we show that $\WF_{\M}u$ can be characterized by the generalized FBI transform introduced by Berhanu and Hounie \cite{MR2864805}.
This generalizes results of Berhanu-Hailu\cite{MR3701268} and
Hoepfner-Medrado \cite{Hoepfner-MedradoPreprint}, 
in particular to quasianalytic classes.

As mentioned in the beginning, one of the fundamental results regarding the classical wavefront set is the elliptic regularity theorem 
which states in its microlocal form that we have for all partial differential operators $P$ with smooth coefficients that 
$\WF u\subseteq\WF Pu\cup \Char P$, where $\Char P$ is the set of characteristic points of $P$, for all distributions $u$.
Similarly H\"ormander proved that $\WF_\M u\subseteq \WF_\M u \cup \Char P$ holds for operators with real-analytic coefficients.
However, recently several authors, e.g.\ Albanese-Jornet-Oliaro\cite{MR2595651} and Pilipovi{\'c}-Teofanov-Tomi{\'c} \cite{Pilipovic:2016oj}, 
have used the pattern of H\"ormander's proof 
to show this inclusion for ultradifferentiable wavefront sets and operators with ultradifferentiable coefficients for variously defined ultradifferentiable classes.

Arguing similarly we prove in section \ref{sec:elliptic} that, if $\M$ is a regular weight sequence that satisfies an additional condition,
which is usually referred to in the literature as \emph{moderate growth}, see e.g.\ Thilliez \cite{MR2384272}, 
then $\WF_\M u\subseteq\WF_\M Pu\cup\Char P$ 
for operators $P$ with coefficients in the Denjoy-Carleman class associated to $\M$.
In fact, we show this inclusion for  operators with ultradifferentiable coefficients acting on distributional sections of ultradifferentiable vector bundles.

Following the approach given separately by Kawai \cite{MR0420735} and H\"ormander \cite{MR0294849} in the analytic case
we use the elliptic regularity theorem in section \ref{sec:uniq} to prove a generalization of Holmgren's Uniqueness Theorem 
to operators with coefficients in quasianalytic Denjoy-Carleman classes. 
Finally we give quasianalytic versions of the generalizations of the analytic Holmgren's theorem due to 
Bony \cite{MR0474426}, H\"ormander \cite{MR1275197}, Sj\"ostrand \cite{MR699623} and Zachmanoglou \cite{MR0299925}.

The author was supported by the Austrian Science Fund FWF, international cooperation  project Nr.\ I01776 and by the Czech Science Foundation GACR grant 17-19437S.
\section{Denjoy-Carleman classes}\label{sec:DC}
Troughout this article $\Omega$ is going to denote an open subset of $\R^n$.
A \emph{weight sequence} is a sequence of positive real numbers $(M_j)_{j\in\N_0}$ 
with the following properties
\begin{align*}
M_0&=1\\
M_j^2&\leq M_{j-1}M_{j+1}\qquad j\in\N.
\end{align*}
\begin{Def}
Let $\M=(M_j)_j$ be a weight sequence.
We say that a smooth function $f\in\E(\Omega)$ is \emph{ultradifferentiable of class $\{\M\}$} iff for every compact set 
$K\subset\subset \Omega$ there exist  constants $C$ and $h$ such that for all multi-indices $\alpha\in\N_0^n$
\begin{equation}\label{DC-classDef}
\bigl\lvert D^{\alpha}f(x)\bigr\rvert\leq Ch^{\lvert\alpha\rvert}M_{\lvert\alpha\rvert} \qquad x\in K.
\end{equation}

We denote the space of ultradifferentiable functions of class $\{M\}$ on $\Omega$ as $\E_\M(\Omega)$.
Note that $\E_\M(\Omega)$ is always a subalgebra of $\E(\Omega)$ (Komatsu \cite{MR0320743}).
\end{Def}
\begin{Ex}
For any $s\geq 0$ consider the sequence $\M^s=((k!)^{s+1})_k$. The space of ultradifferentiable functions 
associated to $\M^s$ is the well known space of Gevrey functions $\mathcal{G}^{s+1}=\E_{\M^s}$ of order $s+1$, c.f.\ e.g.\ Rodino \cite{MR1249275}.
If $s\!=\!0$ then $\G^1\!=\!\E_{\M^0}\!=\!\mathcal{O}$ is the space of real-analytic functions.
\end{Ex}
\begin{Rem}
It is easy to see that $\E_\M(\Omega)$ is an infinite-dimensional vector space, 
since it contains all polynomials.
In fact $\E_\M(\Omega)$ is a complete locally convex space, see e.g.\ Komatsu \cite{MR0320743}.
The topology on $\E_\M(\Omega)$ is defined as follows. If $K\subset\subset \Omega$ is a compact set 
such that $K=\overline{K^\circ}$ 
then we define for $f\in\E(K)$
\begin{equation*}
\lVert f\rVert_K^h:=\sup_{\substack{x\in K\\ \alpha\in\N^n_0}}\biggl\lvert\frac{D^{\alpha}f(x)}{h^{\lvert\alpha\rvert}M_{\lvert\alpha\rvert}}\biggr\rvert
\end{equation*}
and set
\begin{equation*}
\E^h_\M(K):=\bigl\{f\in\E(K)\mid \lVert f\rVert_K^h<\infty\bigr\}.
\end{equation*}

It is easy to see that $\E^h_\M(K)$ is a Banach space. Moreover, $\E^h_\M(K)\subsetneq\E^k_\M(K)$ 
for $h<k$ and the inclusion mapping $\iota^k_h:\E^h_\M(K)\rightarrow\E^k_\M(K)$ is compact.
Hence the space
\begin{equation*}
\E_\M(K):=\bigl\{f\in\E(K)\mid \exists h>0\colon\lVert f\rVert_K^h<\infty\bigr\}=\varinjlim_h\, \E^h_\M(K)
\end{equation*}
is a (LB)-space. We can now write 
\begin{equation*}
\E_{\M}(\Omega)=\varprojlim_K\,\E_{\M}(K)
\end{equation*}
as a projective limit. For more details on the topological structure of $\E_\M(\Omega)$ see Komatsu \cite{MR0320743}.
\end{Rem}
We call $\E_\M(\Omega)$ also the \emph{Denjoy-Carleman class on $\Omega$ associated
to the weight sequence $\M$}.

If $\M$ and $\mathcal{N}$ are two weight sequences then
\begin{equation*}
\M\preccurlyeq\mathcal{N}\,:\Longleftrightarrow\, \sup_{k\in\N_0}\biggl(\frac{M_k}{N_k}\biggr)^{\tfrac{1}{k}}<\infty
\end{equation*}
defines a reflexive and transitive relation on the space of weight sequences.
Furthermore it induces an equivalence relation by setting
\begin{equation*}
\M\approx\mathcal{N}\,:\Longleftrightarrow\, \M\preccurlyeq\mathcal{N}\;\text{and}\; \mathcal{N}\preccurlyeq\M.
\end{equation*}
It holds that $\E_\M\subseteq\E_\mathcal{N}$ if and only if $\M\preccurlyeq\mathcal{N}$
and $\E_\M=\E_{\mathcal{N}}$ if and only if $\M \approx\mathcal{N}$, see Mandelbrojt \cite{MR0051893}, 
c.f.\ also \cite{MR3285413} and \cite{MR2384272}. 
For example, if $r<s$ then $\mathcal{G}^{r+1}\subsetneq\mathcal{G}^{s+1}$.

The weight function $\omega_\M$ (c.f.\ Mandelbrojt \cite{MR0051893} and Komatsu \cite{MR0320743}) associated to the weight sequence $\M$ is defined by
\begin{align*}
\omega_\M(t)&:=\sup_{j\in\N_0}\log\frac{t^j}{M_j}  \qquad t>0,\\
\omega_\M(0)&:=0.
\end{align*}
We note that $\omega_\M$ is a continuous increasing function on $[0,\infty)$, vanishes on the interval $[0,1]$
 and $\omega_\M\circ\exp$ is convex.
 In particular $\omega_\M(t)$ increases faster than $\log t^p$ for any $p>0$ as $t$ tends to infinity  
It is possible to extract the weight sequence from the weight function , i.e.
\begin{equation*}
M_k=\sup_t\frac{t^k}{e^{\omega_{\M}(t)}},
\end{equation*}
see \ Mandelbrojt \cite{MR0051893} or Komatsu \cite{MR0320743}.

If $f$ and $g$ are two continuous functions defined on $[0,\infty)$ then we write $f\sim g$ iff 
$f(t)=O(g(t))$ and $g(t)=O(f(t))$ for $t\rightarrow \infty$.
It can be shown that the weight function $\omega_{s}$ for the Gevrey space $\mathcal{G}^{s+1}$ satisfies 
\begin{equation*} 
\omega_s(t)\sim t^{\tfrac{1}{s+1}}.
\end{equation*}

Sometimes the classes $\E_\M$ are defined using the sequence $m_k=\frac{M_k}{k!}$ instead of $(M_k)_k$ and 
\eqref{DC-classDef} is replaced by
\begin{equation*}
\bigl\lvert D^{\alpha}f(x)\bigr\rvert\leq C h^{\lvert\alpha\rvert}\lvert\alpha\rvert!m_{\lvert\alpha\rvert}.
\end{equation*}
Infrequently the sequences $\mu_k=\tfrac{M_{k+1}}{M_k}$ or $L_k=M_k^{\tfrac{1}{k}}$ are also used, with an accordingly modified version of \eqref{DC-classDef},
c.f.\ also Remark \ref{HoeClassCom}.
The main reason for the different ways of defining the Denjoy-Carleman classes is the following.
In order to show that these classes satisfy certain properties, 
like the inverse function theorem, one has to put certain conditions 
on the defining data of  the spaces, i.e.\ the weight sequence, c.f.\ e.g.\ Rainer-Schindl \cite{MR3462072}. 
Often these conditions are easier to write down in terms of these other sequences instead of using $(M_j)_j$.
In the following our point of view is that  the sequences 
$(M_k)_k$, $(m_k)_k$, $(\mu_k)_k$ and $(L_k)_k$ are all associated to the
weight sequence $\M$.
We are going to use especially the two sequences $(m_j)_j$ and $(M_j)_j$ 
indiscriminately.

We may note that sometimes ultradifferentiable functions associated to the weight sequence $\M$ are defined 
as smooth functions satisfying \eqref{DC-classDef} for all $h>0$ on each compact $K$, see e.g.\  Ehrenpreis \cite{MR0285849}.
One says then that $f$ is ultradifferentiable of class $(\M)$ 
and the corresponding space is the Beurling class associated to $\M$. 
On the other hand $\E_\M$ is then usually called the Roumieu class associated to $\M$, 
c.f.\ e.g.\ Komatsu \cite{MR0320743} or Rainer-Schindl \cite{MR3462072}.

From now on we shall put certain conditions on the weight sequences under consideration.
\begin{Def}\label{Link}
We say that a weight sequence $\M$ is \emph{regular} iff it satisfies the following conditions:
\begin{align*}
\tag{\text{M}1} &m_0=m_1=1\label{normalization}\\
\tag{\text{M}2} \label{derivclosed} &\sup_k\sqrt[\leftroot{1}k]{\frac{m_{k+1}}{m_k}}<\infty\\
\tag{\text{M}3}\label{stlogconvex} &m_k^2\leq m_{k-1}m_{k+1}\qquad k\in\N\\
\tag{\text{M}4}\label{analyticincl} &\lim_{k\rightarrow\infty}\sqrt[\leftroot{2}k]{m_k}=\infty
\end{align*}
\end{Def}
The last condition just means that the space $\O$ of all real-analytic functions is strictly contained in 
$\E_\M$ whereas the first is an useful normalization condition that will help simplify certain computations.
It is obvious that if we replace in \eqref{normalization} the number 
$1$ with some other positive real number we would not
change the resulting space $\E_\M$.

If $\M$ is a regular weight sequence, then it is well known that the associated Denjoy-Carleman class satisfies
certain stability properties, c.f.\ e.g.\ Bierstone-Milman \cite{MR2061220} or Rainer-Schindl \cite{MR3462072}.
For example $\E_\M$ is \emph{closed under differentiation}, i.e.\ if $f\in\E_\M(\Omega)$ then 
$D^\alpha f\in\E_\M(\Omega)$ for all $\alpha\in\N^n_0$.
\begin{Rem}\label{HadamardLemma}
The fact that $\E_\M(\Omega)$ is closed under differentiation 
 implies immediately another stability condition, 
namely \emph{closedness under division by a coordinate} (c.f.\ Bierstone-Milman \cite{MR2061220}):

Suppose that $f\in\E_\M(\Omega)$ and $f(x_1,\dotsc ,x_{j-1},a,x_{j+1},\dotsc ,x_n)=0$ 
for some fixed $a\in\R$ and
all $x_k$, $k\neq j$, with the property that $(x_1,\dotsc ,x_{j-1},a,x_{j+1},\dotsc ,x_n)\in\Omega$. 
Then we apply the Fundamental Theorem of Calculus to the function 
\begin{equation*}
f_j: t\longmapsto f(x_1,\dotsc,x_{j-1},t(x_j-a)+a,x_{j+1},\dotsc, x_n)
\end{equation*}
and obtain 
\begin{equation*}
f(x)=\int_{0}^1\! \frac{\partial f_j}{\partial t}(t)\, dt =(x_j-a)\int_{0}^1\!
\frac{\partial f}{\partial x_j}(x_1,\dotsc,x_{j-1},t(x_j-a)+a,x_{j+1},\dotsc, x_n)\,dt=(x_j -a) g(x).
\end{equation*}
It is easy to see that $g\in\E_\M(\Omega)$ using $\tfrac{\partial f}{\partial x_j}\in\E_\M(\Omega)$.
\end{Rem}

For the proof of the properties above only \eqref{derivclosed} was used. 
If we apply also \eqref{stlogconvex} then it is possible to show that $\E_\M(\Omega)$ is \emph{inverse closed},
i.e.\ if $f\in\E_\M(\Omega)$ does not vanish at any point of $\Omega$ then
\begin{equation*}
\frac{1}{f}\in\E_\M(\Omega),
\end{equation*}
c.f.\ Rainer-Schindl \cite{MR3462072} and the remarks therein.

In fact, if $\M$ is a regular weight sequence then the associated Denjoy-Carleman class satisfies also
the following stability properties.
\begin{Thm}\label{CMStability}
Let $\M$ be a regular weight sequence and $\Omega_1\subseteq\R^m$ and 
$\Omega_2\subseteq\R^n$ open sets. Then the following holds:
\begin{enumerate}
\item The class $\E_\M $ is \emph{closed under composition} (Roumieu \cite{MR0158261} see also Bierstone-Milman\cite{MR2061220}) 
i.e.\ let $F\!:\Omega_1\rightarrow\Omega_2$ be an $\E_\M$-mapping, that is each
component $F_j$ of $F$ is ultradifferentiable of class $\{\M\}$ in $\Omega_1$, and $g\in\E_\M(\Omega_2)$.
Then also $g\circ F\in \E_\M(\Omega_1)$.
\item The \emph{inverse function theorem} holds in the Denjoy-Carleman class $\E_\M$ (Komatsu \cite{MR531445}): 
Let $F:\Omega_1\rightarrow \Omega_2$ be an $\E_\M$-mapping and $p_0\in\Omega_1$ such that 
the Jacobian $F^\prime(p_0)$ is invertible. Then there exist neighbourhoods $U$ of $p_0$ in $\Omega_1$
and $V$ of $q_0=F(x_0)$ in $\Omega_2$ and a $\E_\M$-mapping $G:V\rightarrow U$ such that $G(q_0)=p_0$
and $F\circ G=\id_V$.
\item The \emph{implicit function theorem} is valid in $\E_\M$ (Komatsu \cite{MR531445}): Let $F:\R^{n+d}\supseteq\Omega\rightarrow \R^d$
be a $\E_\M$-mapping and $(x_0,y_0)\in \Omega$ such that $F(x_0,y_0)=0$ and
$\tfrac{\partial F}{\partial y}(x_0,y_0)$
is invertible. Then there exist open sets $U\!\subseteq\!\R^n$ and $V\!\subseteq\! \R^d$ with 
$(x_0,y_0)\!\in\! U\!\times\! V\!\subseteq\!\Omega$ and an $\E_\M$-mapping $G:\,U\rightarrow V$ 
such that $G(x_0)=y_0$
and $F(x,G(x))=0$ for all $x\in V$.
\end{enumerate}
\end{Thm}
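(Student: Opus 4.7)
The plan is to prove the three stability properties in turn, with the composition closure in (1) providing the combinatorial backbone and the inverse and implicit function theorems in (2) and (3) being deduced from it. The main obstacle throughout is the Faà di Bruno combinatorics in step (1): converting the sum over set partitions into a clean $C h^{|\alpha|} M_{|\alpha|}$ bound requires careful counting together with sharp use of (M2) and (M3). Once this is in hand, steps (2) and (3) are comparatively routine.

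For (1), I would apply the Faà di Bruno formula to write $D^\alpha(g\circ F)$ as a sum, indexed by set partitions of the multi-index $\alpha$ into $k$ blocks, of terms of the form $D^\beta g(F(x))\prod_{i=1}^k D^{\gamma_i}F_{j_i}(x)$, with $|\beta|=k$ and $\gamma_1+\cdots+\gamma_k=\alpha$. Inserting the $\E_\M$-estimates for $g$ and $F$ on a fixed compact set reduces the problem to a purely combinatorial inequality of the schematic form
\begin{equation*}
\sum_{k,\,\gamma_1,\ldots,\gamma_k} M_k \prod_{i=1}^k M_{|\gamma_i|}\cdot(\text{Fa\`a di Bruno factor}) \;\leq\; C\,h^{|\alpha|}\,M_{|\alpha|}.
\end{equation*}
Rewriting the $M_j$ in terms of $m_j=M_j/j!$ turns the Faà di Bruno weighting into a combinatorially transparent form, and then log-convexity (M3) gives $\prod m_{|\gamma_i|}\le m_{|\alpha|}$, while derivation closedness (M2) controls the extra $m_k$ factor arising from the outer derivatives of $g$ by a geometric factor in $|\alpha|$. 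This yields closure under composition.

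For (2), I would apply the ordinary smooth inverse function theorem to obtain a $C^\infty$ local inverse $G$ of $F$ on a neighborhood $V$ of $q_0$ with $G(q_0)=p_0$, and then upgrade its regularity. Differentiating $F(G(y))=y$ via Faà di Bruno yields, for $|\alpha|\geq 2$, a relation
\begin{equation*}
F'(G(y))\cdot D^\alpha G(y) \;=\; -\!\!\!\sum_{\text{non-trivial partitions}}\!\!\!(\text{terms involving lower-order derivatives of }G),
\end{equation*}
so that $D^\alpha G(y)=-(F'\circ G)^{-1}(y)\cdot(\cdots)$. Here $(F'\circ G)^{-1}$ lies in $\E_\M$ near $q_0$ by the inverse closedness of $\E_\M$ (since $\det F'(p_0)\neq 0$) combined with step (1) applied componentwise. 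An induction on $|\alpha|$, reusing the combinatorial estimate from (1), then produces the Denjoy-Carleman bound on $G$.

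For (3), I would reduce to (2) by the classical trick. Define the $\E_\M$-mapping $\tilde F:\Omega\to\R^{n+d}$ by $\tilde F(x,y)=(x,F(x,y))$; its Jacobian at $(x_0,y_0)$ is block-triangular with $\id_n$ and $\tfrac{\partial F}{\partial y}(x_0,y_0)$ on the diagonal, hence invertible. By (2), $\tilde F$ admits a local $\E_\M$-inverse on a neighborhood of $(x_0,0)$, necessarily of the form $(x,z)\mapsto (x,H(x,z))$ with $H\in\E_\M$, and then $G(x):=H(x,0)$ is the required $\E_\M$-implicit function with $F(x,G(x))=0$ and $G(x_0)=y_0$.
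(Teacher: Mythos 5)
The paper does not prove Theorem \ref{CMStability} at all --- it quotes the three statements from Roumieu, Bierstone--Milman and Komatsu --- and your architecture (Fa\`a di Bruno for (1), bootstrapping a smooth local inverse for (2), the graph map $\tilde F(x,y)=(x,F(x,y))$ for (3)) is exactly the one in those references; part (3) in particular is complete and correct. The problem is how you close the estimate in (1). You propose to bound $\prod_i m_{|\gamma_i|}\le m_{|\alpha|}$ by log-convexity \eqref{stlogconvex} and then to absorb the remaining factor $m_k$ coming from $D^\beta g$, $|\beta|=k$, ``by a geometric factor'' using \eqref{derivclosed}. This cannot work: \eqref{derivclosed} only gives $m_{k+1}\le A^k m_k$, hence $m_k\le A^{k(k-1)/2}$, which is not geometric; and in the partition where every block is a singleton one has $k=|\alpha|$, so the leftover factor is $m_{|\alpha|}$ itself, which is $O(C^{|\alpha|})$ only for essentially analytic classes, contradicting \eqref{analyticincl}. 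The two factors must be estimated \emph{jointly}. The key lemma (Roumieu; Bierstone--Milman) is that for a log-convex sequence with $m_0=1$ one has
\begin{equation*}
m_k\,m_{|\gamma_1|}\cdots m_{|\gamma_k|}\;\le\; m_1^{\,k}\,m_{|\alpha|}\qquad\text{whenever }\textstyle\sum_i|\gamma_i|=|\alpha|\text{ and every }|\gamma_i|\ge 1,
\end{equation*}
proved by rearranging the quotients $a_i=m_i/m_{i-1}$; it uses only \eqref{normalization} and \eqref{stlogconvex}, and \eqref{derivclosed} plays no role in closure under composition. With this lemma in place of your split estimate, step (1) is fine.

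In (2) there are two further issues. Invoking inverse closedness and part (1) to conclude $(F'\circ G)^{-1}\in\E_\M$ is circular, since the $\E_\M$-regularity of $G$ is precisely the conclusion; in the recursion you may only use the sup-norm bound for the matrix $(F'(G(y)))^{-1}$ on a compact neighbourhood, which the smooth inverse function theorem already supplies. More seriously, the plain induction ``$|D^\gamma G|\le Ch^{|\gamma|}M_{|\gamma|}$ for $|\gamma|<N$ implies the same for $|\gamma|=N$'' does not close: the Fa\`a di Bruno term with $k$ blocks carries a factor $C^k h_F^k$ with $k$ as large as $N$, so the constant produced at order $N$ is $(\mathrm{const})^N$ times the one assumed, and no choice of $C,h$ is self-consistent. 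This is exactly why Komatsu runs the argument through a formal power-series majorant rather than a direct induction; alternative repairs are to strengthen the inductive hypothesis (e.g.\ to $|D^\gamma G|\le Ch^{|\gamma|}M_{|\gamma|-1}$, so the loss is absorbed order by order) or to deduce (2) from Theorem \ref{ClosednessODE} by realizing $G$ as the solution of the parametrized ODE $x'(t)=(F'(x(t)))^{-1}(y-q_0)$, $x(0)=p_0$. As written, step (2) is a plan whose central estimate would fail.
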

Furthermore it is true that
 $\E_\M(\Omega)$ is \emph{closed under solving ODEs}, to be more specific the following theorem holds.
\begin{Thm}[Yamanaka \cite{MR1128962} see also Komatsu \cite{MR575993}]\label{ClosednessODE}
Let $\M$ be a regular weight sequence, $0\in I\subseteq\R$ an open interval, 
$U\subseteq \R^n$, $V\subseteq\R^d$ open and $F\in\E_\M(I\times U\times V)$. 

Then the initial value problem
\begin{align*}
x^\prime(t)&=F(t,x(t),\lambda) & t&\in I,\,\lambda \in V\\
x(0)&=x_0 &x_0 &\in U
\end{align*}
has locally a unique solution $x$ that is ultradifferentiable near $0$.

More precisely, there is an open set $\Omega\subseteq I\times U\times V$ that contains the point 
$(0,x_0,\lambda)$ and an $\E_\M$-mapping $x=x(t, y,\lambda): \Omega\rightarrow U$ such that
the function $t\mapsto x(t,y_0,\lambda_0)$ is the solution of the initial value problem
\begin{align*}
x^\prime (t)&=F(t,x(t),\lambda_0)\\
x(0)&=y_0.
\end{align*}
\end{Thm}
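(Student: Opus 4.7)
The existence and uniqueness of a smooth local solution $x(t,y,\lambda)$ depending smoothly on both the initial value and the parameter is classical (Picard--Lindel\"of together with smooth dependence on parameters), so the entire content of the statement is the claim that $x\in\E_\M$ near $(0,x_0,\lambda_0)$. I would first reduce to an autonomous system by adjoining $t$ and $\lambda$ to the state variable with trivial dynamics ($\dot t=1$, $\dot\lambda=0$) and translating the initial condition to the origin. The problem thus reduces to: if $F\in\E_\M$ near $0\in\R^N$ and $x$ is the unique smooth solution of $x'=F(x)$, $x(0)=0$, then $x\in\E_\M$ near $0\in\R$.

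Differentiating the ODE recursively and substituting $x'=F\circ x$ at each step expresses $x^{(k+1)}(t)$ as a universal polynomial (the Fa\`a di Bruno formula) in the partial derivatives $\partial^\alpha F(x(t))$ with $\lvert\alpha\rvert\leq k$ and in $x^{(j)}(t)$ with $j\leq k$. The goal is the inductive Cauchy-type bound
\begin{equation*}
\bigl\lvert x^{(k)}(t)\bigr\rvert\leq C h^k M_k
\end{equation*}
on a sufficiently small interval around $0$, with $C,h>0$ independent of $k$. Passing to the normalized sequence $m_k=M_k/k!$, the log-convexity \eqref{stlogconvex} together with the normalization \eqref{normalization} yields $m_im_j\leq m_{i+j}$, which collapses the products of factors $m_{\alpha_i}$ appearing in the Fa\`a di Bruno sum into a single factor $m_k$; the combinatorial multinomials then reassemble into $k!$, matching the conversion $M_k=k!\,m_k$. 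Derivation closedness \eqref{derivclosed} absorbs the bounded geometric growth of $\partial^\alpha F$ into a power of $h$. Equivalently and more cleanly, one can package the induction as a Cauchy majorant argument: construct a scalar $\Phi\in\E_\M(\R)$ dominating $F$ in the majorant sense and show that the scalar IVP $y'=\Phi(y)$, $y(0)=0$, has an $\E_\M$ solution whose Taylor coefficients dominate those of $x$.

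For the joint dependence on $(t,y,\lambda)$ I would differentiate the ODE in $y$ and $\lambda$ to obtain the variational equations, which are \emph{linear} ODEs whose coefficients are compositions of $\partial^\alpha F$ with the already-constructed $x$; by the composition theorem (Theorem \ref{CMStability}(1)) these coefficients lie in $\E_\M$, and reapplying the majorant estimate to the variational systems delivers the joint bound
\begin{equation*}
\bigl\lvert \partial_t^k\partial_y^\beta\partial_\lambda^\gamma x(t,y,\lambda)\bigr\rvert\leq C h^{k+\lvert\beta\rvert+\lvert\gamma\rvert}\, M_{k+\lvert\beta\rvert+\lvert\gamma\rvert}
\end{equation*}
on a possibly smaller neighborhood of $(0,x_0,\lambda_0)$, which is exactly the assertion. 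The main obstacle is the combinatorial step: the interplay between \eqref{derivclosed} and \eqref{stlogconvex} is what makes the induction close; neither alone suffices, since without \eqref{derivclosed} the derivatives of $F$ escape $\E_\M$, and without \eqref{stlogconvex} the Fa\`a di Bruno sums do not collapse against the weight $M_k$. This is precisely what the proofs of Yamanaka \cite{MR1128962} and Komatsu \cite{MR575993} execute in detail.
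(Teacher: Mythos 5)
The paper does not prove this theorem; it is quoted verbatim from Yamanaka and Komatsu and used as a black box (its role in the paper is only to guarantee that integral curves of $\E_\M$-vector fields are $\E_\M$-curves). So there is no in-paper argument to compare against, and your proposal can only be judged as a reconstruction of the cited proofs. As such it is essentially correct and follows the classical route: reduction to an autonomous system, the Fa\`a di Bruno recursion $x^{(k+1)}=(F\circ x)^{(k)}$, an inductive Cauchy estimate closed by log-convexity, and the variational (linearized) equations for the dependence on $(y,\lambda)$, with the majorant method as the clean packaging. This is the same architecture as in Komatsu \cite{MR575993} and Yamanaka \cite{MR1128962}.

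One point in your combinatorial step is stated too loosely. Superadditivity $m_im_j\leq m_{i+j}$ alone does \emph{not} collapse the Fa\`a di Bruno sum: a term involving $\partial^\alpha F$ with $\lvert\alpha\rvert=j$ and block sizes $k_1,\dotsc,k_j$ with $k_1+\dots+k_j=k$ carries the product $m_j\,m_{k_1}\dotsm m_{k_j}$, and superadditivity only bounds this by $m_{j+k}$, which is far too large without moderate growth. What is actually needed is the sharper chain-rule inequality $m_j\prod_{i=1}^{j}m_{k_i}\leq m_k$ for $k_i\geq 1$, which follows from convex interpolation of $\log m$ between the nodes $1$ and $k$ using \eqref{normalization} (this is the lemma underlying Roumieu's composition theorem, condition \eqref{stlogconvex} being essential). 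With that correction the induction closes as you describe, since the passage from $M_k$ to $M_{k+1}$ only uses monotonicity. Relatedly, your claim that \eqref{derivclosed} is indispensable is overstated: the recursion only ever evaluates finitely many derivatives of $F$ pointwise, so the estimate $\lvert\partial^\alpha F\rvert\leq Ch^{\lvert\alpha\rvert}M_{\lvert\alpha\rvert}$ from $F\in\E_\M$ suffices and one never needs $\partial^\alpha F$ to lie in $\E_\M$ as a separate fact. Finally, for the joint estimate in $(t,y,\lambda)$ the variational equations give each mixed derivative separately, but obtaining the uniform constant $h^{k+\lvert\beta\rvert+\lvert\gamma\rvert}M_{k+\lvert\beta\rvert+\lvert\gamma\rvert}$ requires a simultaneous double induction (or the majorant series in all variables at once), which is where the cited proofs spend most of their effort; your sketch correctly identifies this but does not carry it out.
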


For any regular weight sequence $\M$ we can define the associated weight by
\begin{equation}\label{Def:weight}
h_\M (t)=\inf_k t^km_k \quad \text{if } t>0\quad \text{\&}\quad h_\M(0)=0.
\end{equation}
Similarly to above we have that
\begin{equation*}
m_k=\sup_t\frac{h_\M(t)}{t^k}
\end{equation*}
In order to describe the connection between the weight and the weight function associated to a regular weight sequence we set
\begin{align*}
\tilde{\omega}_\M(t)&:=\sup_{j\in\N_0}\log\frac{t^j}{m_j}\\
\tilde{h}_\M (t)&=\inf_k t^kM_k 
\end{align*}
for $t>0$ and $\tilde{\omega}_\M(0)=\tilde{h}_\M(0)=0$.
\begin{Lem}\label{lem-weight-con}
If $\M$ is a regular weight sequence then
\begin{equation}\label{con}
\begin{split}
h_\M(t)&=e^{-\tilde{\omega}_\M\bigl(\tfrac{1}{t}\bigr)}\\
\tilde{h}_\M(t)&=e^{-\omega_\M\bigl(\tfrac{1}{t}\bigr)}
\end{split}
\end{equation}
\end{Lem}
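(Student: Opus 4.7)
The plan is to observe that both identities follow directly by taking logarithms and using the elementary fact that $-\sup_j a_j = \inf_j (-a_j)$, combined with the fact that the exponential function is continuous and strictly increasing, so it commutes with infima and suprema over any index set.

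To make this concrete for the first identity, I would compute, for $t > 0$,
\begin{equation*}
-\tilde{\omega}_\M\!\left(\tfrac{1}{t}\right) = -\sup_{j\in\N_0}\log\frac{t^{-j}}{m_j} = \inf_{j\in\N_0}\log(t^j m_j) = \log\inf_{j\in\N_0} t^j m_j = \log h_\M(t),
\end{equation*}
so exponentiating yields $h_\M(t) = e^{-\tilde{\omega}_\M(1/t)}$. The proof of the second identity is identical, with the sequence $(m_j)$ replaced by $(M_j)$ throughout; it recovers the classical duality between a weight sequence and its associated weight function already referenced after the definition of $\omega_\M$ above.

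Before writing down that chain of equalities I would briefly justify the two interchanges used. First, since $\M$ is regular, condition \eqref{analyticincl} gives $m_k^{1/k}\to\infty$, hence $t^k m_k\to\infty$ for every fixed $t>0$; together with $m_0 = 1$ this guarantees that the infimum defining $h_\M(t)$ is a finite positive number and, symmetrically, that the supremum defining $\tilde{\omega}_\M(1/t)$ is finite. Second, the passage $\log\inf_j a_j = \inf_j\log a_j$ is valid on $(0,\infty)$ because $\log$ is continuous and increasing. The boundary case $t=0$ is trivial since by definition both sides of each identity are $0$ (respectively $1$ after exponentiating the convention $-\tilde{\omega}_\M(+\infty) = -\infty$, matching $h_\M(0)=0$).

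I do not expect any real obstacle: the lemma is essentially a bookkeeping statement identifying the Legendre–Fenchel-type transform of the sequence $(\log m_k)_k$ with $\tilde{\omega}_\M$, and analogously for $(M_k)_k$. The only point worth handling carefully is the extended-real arithmetic at $t=0$ and $t=+\infty$, which is why I would spell out the finiteness of the relevant $\inf$ and $\sup$ using the regularity hypotheses before performing the logarithmic manipulation.
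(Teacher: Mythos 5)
Your proof is correct and is essentially the paper's own argument: the paper exponentiates $\tilde{\omega}_\M(1/t)=\sup_k\log\frac{1}{m_kt^k}$ and uses monotonicity of $\exp$ together with $\sup_k a_k^{-1}=(\inf_k a_k)^{-1}$, which is exactly your logarithmic manipulation read in reverse. Your extra remarks on finiteness of the $\inf$/$\sup$ and the convention at $t=0$ are harmless additions that the paper leaves implicit.
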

\begin{proof}
We prove only the equality for $h_\M$. Of course, the verification of the other equation is completely analogous.
If $t>0$ is chosen arbitrarily we have by the monotonicity of the exponential function that 
\begin{equation*}
\exp \biggl(\tilde{\omega}_\M\biggl(\frac{1}{t}\biggr)\biggr)=\exp\biggl(\sup_k\log \frac{1}{m_kt^k}\biggr)=
\sup_k\frac{1}{m_kt^k}=\frac{1}{\inf_k m_k t^k}=\frac{1}{h_\M (t)}.
\end{equation*}
\end{proof}
We obtain that $h_\M$ is continuous with values in $[0,1]$, equals $1$ on $[1,\infty)$
 and goes more rapidly to $0$ than $t^p$ for any $p>0$ for $t\rightarrow 0$.
  Albeit the weight function is the prevalant concept,
the weight was used e.g.\ by Dyn'kin \cite{MR0587795,zbMATH03751341} and Thilliez \cite{MR2011916}.
\begin{Ex}
If $\M=\M^s$ is the Gevrey sequence of order $s$ then we know already that
the associated weight function satisfies
$\omega_s(t)\sim t^{\frac{1}{1+s}}$. 
Hence $\eqref{con}$ shows for $s>0$ that if we set
\begin{equation*}
f_s(t)=e^{-\tfrac{1}{t^s}}
\end{equation*}
then there are constants $C_1$, $C_2$, $Q_1$ and $Q_2>0$ such that
\begin{equation*}
C_1 f_s\bigl(Q_1t\bigr)\leq h_s(t)\leq C_2 f_s\bigl(Q_2t\bigr) 
\end{equation*}
for $t>0$.
\end{Ex}

It is well known (see e.g.\ Mather \cite{MR0412461} or Melin-Sj{\"o}strand \cite{MR0431289} 
) that a function $f$ is smooth on $\Omega$ 
if and only if there is an 
almost-analytic extension $F$ of $f$, i.e.\ there exists a smooth function $F$ on some open set 
$\tilde{\Omega}\subseteq\C^n$ with $\tilde{\Omega}\cap\R^n=\Omega$ such that 
\begin{equation*}
\bar{\partial}_jF=\frac{\partial}{\partial \bar{z}_j}F=\frac{1}{2}\biggl(\frac{\partial}{\partial x_j}+i\frac{\partial}{\partial y_j}\biggr)F
\end{equation*}
is flat on $\Omega$ and $F|_\Omega=f$. 
The idea is now that if $f$ is ultradifferentiable 
then one should find an extension $F$ of $f$ such that the regularity of $f$ is 
translated in a certain uniform decrease of $\tilde{\partial}_jF$ near $\Omega$ (c.f.\ Dyn'kin \cite{MR1253229}). 
Such extensions were constructed e.g.\ by Petzsche-Vogt \cite{MR737333} and Adwan-Hoepfner \cite{MR2718658}
under relative restrictive conditions on the weight sequence. 
The most general result in this regard though was given by Dyn'kin \cite{MR0587795} (c.f.\ the english translation in \cite{zbMATH03751341}).
\begin{Thm}\label{Dynkin1}
Let $\M$ be a regular weight sequence, $K\subset\subset\R^n$ a compact convex set with 
$K=\overline{K^\circ}$.
Then $f\in\E_\M(K)$ if and only  if there exists a test function $F\in\D(\C^n)$ with $F|_K=f$ and 
if there are constants $C,Q>0$ such that
\begin{equation}
\bigl\lvert\bar{\partial}_jF(z)\bigr\rvert\leq C h_\M(Qd_K(z))
\end{equation}
where $1\leq j\leq n$ and $d_K$ is the distance function with respect to $K$ on $\C^n\!\setminus\!K$.
\end{Thm}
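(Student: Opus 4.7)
My plan is to treat the two implications separately: for sufficiency I will recover derivative bounds on $f$ from $F$ via the Cauchy--Pompeiu formula, while for necessity I will construct $F$ by gluing truncated Taylor polynomials of $f$ at dyadic scales in $|\imag z|$.

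For the \emph{sufficiency direction}, let $F$ be as in the statement. For $x \in K^\circ$ and a multi-index $\alpha$, I apply the one-variable Cauchy--Pompeiu representation coordinate by coordinate on a polydisc of radius $r$ centred at $x$. Differentiating $\alpha$ times in $x$ and estimating one obtains
\begin{equation*}
\bigl\lvert \partial^\alpha f(x)\bigr\rvert \leq \frac{\alpha!}{r^{|\alpha|}}\,\|F\|_\infty + C_1\, \alpha! \sum_{j=1}^n \int_0^r \frac{h_\M(Qs)}{s^{\alpha_j}}\,ds,
\end{equation*}
using $d_K(x+iy)\leq |y|$ since $x\in K$. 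From the very definition of $h_\M$ one has $h_\M(t) \leq t^{|\alpha|+1}m_{|\alpha|+1}$, which bounds each integral by a constant times $Q^{|\alpha|+1}m_{|\alpha|+1}\,r^2$. Combining this with \eqref{derivclosed} to control the index shift, we obtain an estimate of the form $|\partial^\alpha f(x)| \leq C' H^{|\alpha|}M_{|\alpha|}$ on $K^\circ$; the hypothesis $K = \overline{K^\circ}$ then transfers this bound to all of $K$ by continuity, showing $f \in \E_\M(K)$.

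For the \emph{necessity direction}, suppose $f\in\E_\M(K)$ with $|\partial^\alpha f(x)| \leq Ch^{|\alpha|}M_{|\alpha|}$. Consider the truncated Taylor extensions
\begin{equation*}
F_N(x+iy) = \sum_{|\alpha|\leq N} \frac{(iy)^\alpha}{\alpha!}\,\partial^\alpha f(x),
\end{equation*}
for which a direct computation using $\bar\partial_j=\tfrac{1}{2}(\partial_{x_j}+i\partial_{y_j})$ telescopes to
\begin{equation*}
\bar\partial_j F_N(x+iy) = \tfrac{1}{2} \sum_{|\alpha|=N}\frac{(iy)^\alpha}{\alpha!}\,\partial^{\alpha+e_j}f(x),
\end{equation*}
whose modulus is bounded, after invoking \eqref{derivclosed} and the multinomial identity, by a constant multiple of $(H|y|)^{N+1}m_{N+1}$. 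Taking the infimum in $N$ yields precisely a bound of the form $C\,h_\M(Q|y|)$. To obtain a single smooth extension I would choose a dyadic partition of unity $\{\chi_k\}$ in $|y|$ and set $F(z) = \psi(z)\sum_k \chi_k(|y|)F_{N_k}(z)$, with $N_k$ tuned so that the infimum above is attained near the scale $|y|\sim 2^{-k}$; errors produced by differentiating $\chi_k$ are controlled via the log-convexity \eqref{stlogconvex} of $(m_j)_j$. The cutoff $\psi$ supported in a small neighbourhood of $K$ ensures $F\in\D(\C^n)$, and convexity of $K$ together with $K=\overline{K^\circ}$ guarantees that the Taylor polynomials make sense in that neighbourhood.

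\textbf{Main obstacle.} The delicate point lies in the necessity direction: arranging $N_k$ together with the dyadic partition of unity so that the globally smoothed $F$ retains the sharp pointwise $h_\M$-decay of $\bar\partial F$. The cancellation mechanism in $\bar\partial_j F_N$ is exact for each fixed $N$, but derivatives of the partition of unity produce terms of order $2^k\cdot|F_{N_k}-F_{N_{k+1}}|$, and verifying that these lie within the required decay envelope is exactly where both \eqref{derivclosed} and \eqref{stlogconvex} enter essentially; this is also the reason the sharper regularity assumptions on $\M$ (as opposed to mere log-convexity of $\M$ itself) cannot be dispensed with.
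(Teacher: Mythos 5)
First, a point of reference: the paper does not prove this theorem at all — it is quoted from Dyn'kin \cite{MR0587795,zbMATH03751341} — so your attempt must be judged on its own, and it contains two genuine gaps. The decisive one is in the necessity direction. Your extension $F_N(x+iy)=\sum_{|\alpha|\le N}\frac{(iy)^\alpha}{\alpha!}\partial^\alpha f(x)$ is only defined for $\real z=x\in K$, whereas the theorem demands $F\in\D(\C^n)$ with $\lvert\bar\partial_jF(z)\rvert\le Ch_\M(Qd_K(z))$ on all of $\C^n\setminus K$ — in particular at points $z$ with $\real z\notin K$ but $d_K(z)$ arbitrarily small, where a cutoff $\psi$ contributes an $O(1)$ term to $\bar\partial_jF$ that is not dominated by $h_\M(Qd_K(z))$. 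The claim that ``convexity of $K$ together with $K=\overline{K^\circ}$ guarantees that the Taylor polynomials make sense in that neighbourhood'' is not a proof: $\partial^\alpha f$ is simply undefined off $K$, and since quasianalytic regular $\M$ are allowed here, one cannot first extend $f$ to an open neighbourhood within $\E_\M$. What Dyn'kin actually does — and what is missing from your construction — is to work with the Whitney jet of $f$ on $K$ and recenter the truncated Taylor polynomial at (approximately) the nearest point of $K$, glued over a Whitney-type decomposition of $\C^n\setminus K$; convexity of $K$ enters to compare the jets at different base points via Taylor's formula along segments inside $K$. Your dyadic gluing in $\lvert\imag z\rvert$ only handles the choice of truncation order, which is the easier half of the construction (there your telescoping identity for $\bar\partial_jF_N$ and the use of \eqref{derivclosed} to absorb the factor $\lvert y\rvert^{-1}$ via $h_\M(t)\le t\,h_\M(Bt)$ are correct).

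Second, in the sufficiency direction the displayed inequality does not follow from ``the one-variable Cauchy--Pompeiu representation coordinate by coordinate.'' Iterating that formula over several variables produces area integrals against mixed higher-order derivatives $\bar\partial_{j_1}\bar\partial_{j_2}F$, which the hypothesis does not control; likewise, differentiating a single-variable representation in a transversal real direction hits $\partial_{x_k}\bar\partial_jF$, with the same problem. The standard repair is to restrict $F$ to complex lines $\tau\mapsto x+\tau v$ with $v\in\R^n$ (convexity of $K$ is used again to keep $d_K(x+\tau v)$ controlled by $\lvert v\rvert\lvert\imag\tau\rvert$), apply the one-variable Pompeiu argument to obtain uniform bounds on the pure directional derivatives $\partial_v^Nf$, and then recover mixed derivatives by polarization. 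Both gaps are repairable, but each requires an idea that is absent from the proposal.
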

We shall note that Dyn'kin used the function $h_1(t)=\inf_{k\in\N} m_k t^{k-1}$ instead of the weight $h_\M$\footnote{$h_1$ is in fact the weight associated to the shifted sequence $(m_{k+1})_{k}$}.
But we observe that 
\begin{equation*}
h_\M(t)=\inf_{k\in\N_0}m_kt^k\leq t\inf_{k\in\N}m_k t^{k-1}=th_1(t)\leq Ct\inf_{k\in\N}m_{k-1}t^{k-1}=Cth_{\M}(t),
\end{equation*}
where we used \eqref{derivclosed}.
Since $h_\M$ is rapidly decreasing for $t\rightarrow 0$ we can 
interchange these two functions in the formulation of Theorem \ref{Dynkin1}. 
In fact, Dyn'kin's proof gives immediately the following result.
\begin{Cor}\label{CharMalmostanalytic}
Let $\M$ be a regular weight sequence, $p\in\Omega$ and $f\in\D^\prime(\Omega)$. 
If $f$ is ultradifferentiable of class $\{\M\}$ near $p$, 
i.e.\ there exists a compact neighbourhood $K$ of $p$ such that $f\vert_K\in\E_\M(K)$,
then  there are an open neighbourhood $W\subseteq\Omega$,
a constant $\rho>0$ and a function $F\in\E(W+iB(0,\rho))$ such that $F|_W=f|_W$ and
\begin{equation}\label{Malmostest}
\bigl\lvert\bar{\partial}_j F(x+iy)\bigr\rvert\leq Ch_\M (Q\lvert y\rvert)
\end{equation}
for some positive constants $C,Q$ and all $1\leq j\leq n$ and $x+iy\in W+iB(0,\rho)$.
\end{Cor}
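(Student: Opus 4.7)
The plan is to localise Theorem \ref{Dynkin1} by applying it to a small convex compact neighbourhood of $p$ contained in $K$, and then to translate the estimate in terms of the distance function into an estimate in terms of $|\imag z|$.

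Concretely, since $K$ is a compact neighbourhood of $p$, I would choose $r>0$ small enough that the closed Euclidean ball $K':=\overline{B(p,r)}$ lies in $K^\circ$. Then $K'$ is compact, convex and satisfies $K'=\overline{(K')^\circ}$, so Theorem \ref{Dynkin1} applies. Since $f|_K\in\E_\M(K)$ and $K'\subseteq K$, the Cauchy estimates \eqref{DC-classDef} pass to $K'$, so $f|_{K'}\in\E_\M(K')$. Theorem \ref{Dynkin1} then produces $F\in\D(\C^n)$ with $F|_{K'}=f|_{K'}$ and constants $C,Q>0$ such that
\begin{equation*}
\bigl\lvert\bar\partial_j F(z)\bigr\rvert\leq C\,h_\M\bigl(Q\,d_{K'}(z)\bigr)
\end{equation*}
for all $z\in\C^n\setminus K'$ and $1\le j\le n$. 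Now I would set $W:=B(p,r/2)$ and $\rho:=r/2$, so that every point $z=x+iy\in W+iB(0,\rho)$ has $x\in K'$, and hence $d_{K'}(z)\le|z-x|=|y|$. Since $F$ is smooth on all of $\C^n$, the restriction $F|_{W+iB(0,\rho)}$ lies in $\E(W+iB(0,\rho))$, and $F|_W=f|_W$ because $W\subseteq K'$.

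To conclude, I need monotonicity of $h_\M$ on $[0,\infty)$: for $0<s<t$ and each $k\in\N_0$ one has $s^km_k\le t^km_k$, whence $h_\M(s)\le h_\M(t)$. Combined with $d_{K'}(x+iy)\le|y|$ this gives $h_\M(Q\,d_{K'}(x+iy))\le h_\M(Q|y|)$, which yields the required estimate \eqref{Malmostest}.

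The main obstacle is essentially bookkeeping: verifying that shrinking $K$ to a convex ball $K'$ does not lose the $\E_\M$-regularity of $f$, and checking that the distance to $K'$ is controlled by $|\imag z|$ when $\real z\in K'$. Neither of these is deep, which is why the corollary follows \emph{immediately} from Dyn'kin's theorem once the latter has been established.
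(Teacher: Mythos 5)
Your argument is correct and is essentially the derivation the paper intends (the paper simply asserts that the corollary follows from Dyn'kin's theorem): localise to a closed convex ball $K'\subseteq K^\circ$, apply Theorem \ref{Dynkin1}, and use $d_{K'}(x+iy)\le\lvert y\rvert$ for $x\in K'$ together with the monotonicity of $h_\M$. The only point worth adding is the case $y=0$, where \eqref{Malmostest} forces $\bar\partial_jF(x)=0$; this follows by letting $y\to 0$ in your estimate and using the continuity of $\bar\partial_jF$.
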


One of the main questions in the study of ultradifferentiable functions is 
if the class under consideration behaves more like the ring of real-analytic functions
or the ring of smooth functions. E.g., does the class contain flat functions, 
that means nonzero elements whose Taylor series at some point vanishes?
That leads to following definition.
\begin{Def}
Let $E\subseteq\E(\Omega)$ be a subalgebra. We say that $E$ is quasianalytic iff 
for $f\in E$ the fact that $D^\alpha f(p)=0$ for some $p\in\Omega$ and all $\alpha\in\N_0^n$ implies
that $f\equiv 0$ in the connected component of $\Omega$ that contains $p$.
\end{Def}
In the case of Denjoy-Carleman classes quasianalyticity is characterized by the following theorem.
\begin{Thm}[Denjoy\cite{zbMATH02601219}-Carleman\cite{zbMATH02599917,zbMATH02598188}]
The space $\E_\M(\Omega)$ is quasianalytic
if and only if
\begin{equation}\label{quasiCond}
\sum_{k=1}^\infty\frac{M_{k-1}}{M_k}=\infty.
\end{equation}
\end{Thm}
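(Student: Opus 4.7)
The plan is to prove both implications, treating the sufficiency direction as the main work; the first step is the classical reduction to dimension one. Let $f\in\E_\M(\Omega)$ with $D^\alpha f(p)=0$ for all $\alpha\in\N_0^n$, and let $Z$ denote the set of points in the connected component $\Omega_0$ of $p$ at which every derivative of $f$ vanishes. The set $Z$ is trivially closed by continuity, so it suffices to show it is open. Fix $q\in Z$. For each direction $v\in S^{n-1}$ the map $t\mapsto f(q+tv)$ is in $\E_\M$ of an interval around $0$ by closure under composition (Theorem \ref{CMStability}) with all derivatives vanishing at $t=0$; granting the one-dimensional version of the theorem we get $f(q+tv)=0$ for $|t|<\delta_v$. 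Uniformity of the $\E_\M$-seminorms as $v$ varies over the compact sphere produces a uniform $\delta>0$, so $f\equiv 0$ on a ball about $q$, proving openness.

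In the one-dimensional case, let $g\in\E_\M((-a,a))$ with $g^{(k)}(0)=0$ for all $k$. Taylor's formula together with the Cauchy estimate and minimisation over $k$ yields
\[
|g(x)|\ \le\ C\,h_\M(Q|x|)\ =\ C\,\exp\!\bigl(-\tilde\omega_\M(1/(Q|x|))\bigr),
\]
by Lemma \ref{lem-weight-con}. Next, by Corollary \ref{CharMalmostanalytic} I extend $g$ to an almost-analytic $G$ on a strip $(-a',a')+iB(0,\rho)$ with $|\bar\partial G(z)|\le C h_\M(Q|\imag z|)$. On a small half-disc $D^+$ attached to $0$, the Cauchy-Pompeiu formula writes $G$ as the sum of a holomorphic piece and the Cauchy transform of $\bar\partial G$; the latter inherits the same weight-$h_\M$ decay, so modifying $G$ by that transform yields a genuinely holomorphic $\tilde G$ on $D^+$ whose boundary trace on the real axis is still bounded by $C h_\M(Q|x|)$. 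The classical Carleman-Ostrowski quasi-analytic divergence theorem, whose hypothesis $\int_1^\infty \omega_\M(t)/t^2\,dt=\infty$ is elementarily equivalent to $\sum_k M_{k-1}/M_k=\infty$, then forces $\tilde G\equiv 0$ on the half-disc, hence $g\equiv 0$ in a neighbourhood of $0$. The hard step is the absorption of the $\bar\partial$-error into the boundary data without degrading the exponential decay rate: this works precisely because the weight $h_\M$ controlling $\bar\partial G$ is the same weight controlling $g$ on the real axis.

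For the converse, assume $\sum_k M_{k-1}/M_k<\infty$ and set $a_k=M_{k-1}/M_k$ with $A=\sum_k a_k<\infty$; I would construct an explicit nontrivial flat element of $\E_\M(\R)$. Consider
\[
\hat\varphi(\xi)\ =\ \prod_{k=1}^\infty\frac{\sin(a_k\xi)}{a_k\xi},\qquad \supp\hat\varphi\subseteq[-A,A],
\]
and set $\varphi=\F^{-1}\hat\varphi$. Using $|\sin(a_k\xi)/(a_k\xi)|\le (a_k|\xi|)^{-1}$ to bound any chosen $j+2$ of the factors gives $|\xi|^j|\hat\varphi(\xi)|\le M_{j+2}|\xi|^{-2}$, and condition \eqref{derivclosed} yields $M_{j+2}\le C h^j M_j$; integrating in $\xi$ and differentiating under the integral sign we obtain $|\varphi^{(j)}(x)|\le C h^j M_j$, so $\varphi\in\E_\M(\R)$. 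Since $\hat\varphi\not\equiv 0$ the function $\varphi$ is not identically zero; convolving $\varphi$ with a compactly supported translate of itself (or applying a standard cut-off mollifier constructed analogously) produces a nontrivial compactly supported element of $\E_\M$, which vanishes to infinite order outside its support and witnesses the failure of quasi-analyticity.
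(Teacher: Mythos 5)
The paper offers no proof of this statement; it is quoted as the classical Denjoy--Carleman theorem with references to the original papers, so your attempt can only be judged on its own. Your necessity direction is essentially the standard construction (H\"ormander, Thm.~1.3.5): with $a_k=M_{k-1}/M_k$ the infinite product $\prod_k \sin(a_k\xi)/(a_k\xi)$ is the Fourier transform of the infinite convolution of the normalized indicators $\tfrac{1}{2a_k}\chi_{[-a_k,a_k]}$, so it is $\varphi$, not $\hat\varphi$, that is supported in $[-A,A]$ --- your final convolution step is unnecessary, since $\varphi\ge 0$, $\int\varphi=1$ and $\supp\varphi$ compact already witness non-quasianalyticity. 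Two caveats: your bound $M_{j+2}\le Ch^jM_j$ invokes \eqref{derivclosed}, which is not among the hypotheses of the theorem as stated (only $M_0=1$ and log-convexity); the standard remedy is to insert one extra factor $(\sin(\epsilon\xi)/(\epsilon\xi))^2$ into the product so that only $j$ of the original factors are spent on the decay $|\xi|^j|\hat\varphi(\xi)|\le C M_j|\xi|^{-2}$.

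The sufficiency direction breaks at precisely the step you yourself flag as the hard one. The Cauchy transform $T(z)=-\pi^{-1}\int_{D^+}(\zeta-z)^{-1}\,\bar\partial G(\zeta)\,dA(\zeta)$ does \emph{not} inherit the decay $h_\M(Q|x|)$ as $x\to 0$ on the real axis: the estimate $|\bar\partial G(\zeta)|\le Ch_\M(Q|\imag\zeta|)$ is a transverse decay in $\imag\zeta$ and says nothing about $\real\zeta\to 0$. For real $x$ one only gets $|T(x)|\lesssim\int_0^\rho h_\M(Q\eta)\log(1/\eta)\,d\eta$, a constant, and generically $T(0)\neq 0$; subtracting $T$ therefore destroys the flatness of the boundary data at the origin, and the Carleman--Ostrowski log-integral criterion cannot be applied to $\tilde G$. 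There is no cheap passage from an $\M$-almost-analytic extension to a genuinely holomorphic one with the same pointwise boundary decay --- the two weights measure different things. The classical proofs (Carleman, Ostrowski, Bang; H\"ormander Thm.~1.3.8) instead work with an intrinsically holomorphic transform of $g$, e.g.\ $F(z)=\int_0^a g(t)e^{-tz}\,dt$ on a half-plane, whose bounds $|F(z)|\le Ch^kM_k|z|^{-k}$ for every $k$ come from repeated integration by parts using $g^{(j)}(0)=0$, followed by a Phragm\'en--Lindel\"of/Watson-type uniqueness argument; some such replacement is needed here. Note finally that your scheme also relies on Dyn'kin's extension (Corollary \ref{CharMalmostanalytic}) and hence on regularity of $\M$, a further hypothesis absent from the theorem.
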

We say that a weight sequence is quasianalytic iff it satisfies \eqref{quasiCond}
and non-quasianalytic otherwise.
\begin{Ex}\label{QuasiEx}
Let $\sigma>0$ be a parameter. We define a family $\mathcal{N}^\sigma$ 
of regular weight sequences by $N_0=N_1=1$ and
\begin{equation*}
N_k^\sigma=k!\bigl(\log(k+e )\bigr)^{\sigma k}
\end{equation*}
for $k\geq 2$.
The weight sequence $\mathcal{N}^\sigma$ is quasianalytic if and only if 
$0<\sigma\leq 1$, see Thilliez \cite{MR2384272}.
\end{Ex}
\begin{Rem}
Obviously $\D_\M(\Omega)=\D(\Omega)\cap\E_\M(\Omega)$ is nontrivial if and only if
$\E_\M(\Omega)$ is non-quasianalytic, c.f.\ e.g.\ Rudin \cite{MR924157}.
It is well known that the sequences $\M^s$ are non-quasianalytic if and only if $s>0$.
In fact there is a non-quasianalytic regular weight sequence $\tilde{\M}$ such that 
$\tilde{\M}\precnapprox\M^s$ for all $s>0$, see Rainer-Schindl \cite[p.125]{MR3285413}.
Hence 
\begin{equation*}
\mathcal{O}\subsetneq\E_{\tilde{\M}}\subsetneq\bigcap_{s>0}\mathcal{G}^{s+1}.
\end{equation*}
\end{Rem}
Using Theorem \ref{CMStability} we are able to define
\begin{Def}
Let $M$ be a smooth manifold and $\M$ a regular weight sequence. We say that $M$ is an ultradifferentiable 
manifold of class $\{\M\}$ iff there is an atlas $\mathcal{A}$ of $M$ that consists of charts such that 
\begin{equation*}
\varphi^\prime\circ\varphi^{-1}\in\E_{\M}
\end{equation*}
for all $\varphi,\varphi^{\prime}\in\mathcal{A}$.
\end{Def}

A mapping $F\!:\, M\rightarrow N$ between two manifolds of class $\{\M \}$ is 
ultradifferentiable of class $\{\M \}$ iff $\psi\circ F\circ \varphi^{-1}\in\E_{\M}$
for any charts $\varphi$ and $\psi$ of $M$ and $N$, respectively.
We can now consider the category of ultradifferentiable manifolds of class $\{\M\}$. 
We denote by $\mathfrak{X}_\M(M)=\E_\M(M, TM)$ the Lie algebra of ultradifferentiable vector fields on $M$.
Note that, if $\M$ is a regular weight sequence, an integral curve of an ultradifferentiable vector field of class $\{\M\}$ 
is an $\E_\M$-curve by Theorem \ref{ClosednessODE}.

 These considerations allow us to state 
a quasianalytic version of Nagano's theorem \cite{MR0199865}.
\begin{Thm}\label{NaganoThm}
Let  $U$ be an open neighbourhood of $p_0\in\R^n$ and $\M$ a quasianalytic regular weight sequence.
Furthermore let $\mathfrak{g}$ be a Lie subalgebra of $\mathfrak{X}_\M(U)$
 that is also an $\E_\M$-module, i.e.\ if $X\in\mathfrak{g}$ and $f\in\E_\M(U)$ then $fX\in\mathfrak{g}$.
 
 Then there exists an ultradifferentiable submanifold $W$ of class $\{\M\}$ in $U$, such that
 \begin{equation}\label{Naganoeq}
 T_pW=\mathfrak{g}(p)\qquad \forall p\in W.
 \end{equation}
 Moreover, the germ of $W$ at $p_0$ is uniquely defined by this property.
\end{Thm}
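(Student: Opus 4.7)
The plan is to adapt the classical real-analytic proof of Nagano's theorem, replacing the role of Taylor-series convergence for flow pushforwards by a quasianalyticity argument. Let $d = \dim \mathfrak{g}(p_0)$ and pick $X_1,\dots,X_d \in \mathfrak{g}$ whose values at $p_0$ form a basis of $\mathfrak{g}(p_0)$. Define
\begin{equation*}
\Phi(t_1,\dots,t_d) = \phi^{X_1}_{t_1}\circ\cdots\circ\phi^{X_d}_{t_d}(p_0)
\end{equation*}
for $t$ in a small neighbourhood of $0$ in $\R^d$. By Theorem \ref{ClosednessODE} each flow $\phi^{X_i}_{t_i}$ is an $\E_\M$-diffeomorphism near $p_0$, and by Theorem \ref{CMStability}(1) their composition is an $\E_\M$-map. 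Since $d\Phi\vert_0(\partial_{t_i}) = X_i(p_0)$ are linearly independent, $\Phi$ is an immersion at $0$, and the ultradifferentiable inverse function theorem (Theorem \ref{CMStability}(2)) yields that $W := \Phi(B_\varepsilon(0))$ is an $\E_\M$-submanifold of $U$ of dimension $d$ through $p_0$.

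The heart of the argument is the following key lemma: for all $X, Y \in \mathfrak{g}$, all $q$ in a small neighbourhood of $p_0$ and all sufficiently small $t$, the pointwise pushforward $(\phi^X_t)_*Y(q)$ lies in $\mathfrak{g}(q)$. To prove this, I fix $q$ and consider the curve $f\colon t \mapsto (\phi^X_t)_*Y(q) \in T_qU$, which is ultradifferentiable of class $\{\M\}$ in $t$ by Theorems \ref{ClosednessODE} and \ref{CMStability}. Its Taylor coefficients at $t = 0$ are $f^{(n)}(0) = (-\ad X)^n Y(q)$, all of which lie in the finite-dimensional subspace $\mathfrak{g}(q) \subseteq T_qU$ because $\mathfrak{g}$ is closed under Lie brackets. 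Composing $f$ with the projection $\pi_q\colon T_qU \to T_qU/\mathfrak{g}(q)$, one obtains an ultradifferentiable curve with values in a finite-dimensional quotient whose formal Taylor series at $0$ vanishes. Quasianalyticity then forces $\pi_q\circ f \equiv 0$ near $0$, whence $f(t)\in \mathfrak{g}(q)$.

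An immediate consequence is that $d\phi^X_t\vert_q$ restricts to a linear isomorphism $\mathfrak{g}(q)\to\mathfrak{g}(\phi^X_t(q))$, so $\dim\mathfrak{g}(p)$ is constant along the flow of any element of $\mathfrak{g}$. Iterating along the composition defining $\Phi$ gives both $\dim\mathfrak{g}(\Phi(t))=d$ for all small $t$ and, since $\partial_{t_i}\Phi\vert_t$ is an iterated pushforward of $X_i$ evaluated at $\Phi(t)$, the inclusion $T_{\Phi(t)}W \subseteq \mathfrak{g}(\Phi(t))$. Dimension matching then forces the equality \eqref{Naganoeq}. Uniqueness of the germ of $W$ at $p_0$ follows because any other $\E_\M$-submanifold $W'$ with $T_pW'=\mathfrak{g}(p)$ for all $p\in W'$ is automatically invariant under the flows of each $X_i$ (as $X_i\vert_{W'}$ is tangent); hence $W'$ contains the image of $\Phi$, and the two germs coincide by dimension.

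The principal obstacle is the key lemma. In the real-analytic case one writes $(\phi^X_t)_*Y = \sum_n t^n(-\ad X)^n Y/n!$, obtains convergence from Cauchy estimates, and deduces membership in $\mathfrak{g}$ from membership of each Taylor coefficient. In the quasianalytic Denjoy--Carleman setting this series generally diverges, so the argument must instead exploit the fact that although $\mathfrak{g}$ itself is an infinite-dimensional module, the evaluation $\mathfrak{g}(q)$ is a genuine finite-dimensional subspace of $T_qU$; this makes the quotient $T_qU/\mathfrak{g}(q)$ finite-dimensional and allows the quasianalytic uniqueness theorem for $\E_\M$-curves to be applied directly to $\pi_q\circ f$.
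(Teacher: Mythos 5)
Your proposal is correct and follows exactly the route the paper intends: the paper gives no proof of its own but states that the argument is the same as the analytic Nagano theorem in Baouendi--Ebenfelt--Rothschild, and your construction of $\Phi$ via composed flows, the key lemma that $(\phi^X_t)_*Y(q)\in\mathfrak{g}(q)$ via the iterated-bracket Taylor coefficients, and the flow-invariance argument for uniqueness are precisely that proof. Your identification of the one point where quasianalyticity must replace convergence of the Taylor series --- applying the quasianalytic uniqueness property to the projection of the $\E_\M$-curve $t\mapsto(\phi^X_t)_*Y(q)$ onto the finite-dimensional quotient $T_qU/\mathfrak{g}(q)$ --- is exactly the adaptation required.
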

The proof of Theorem \ref{NaganoThm} is the same as in the analytic version, c.f.\ e.g.\ Baouendi-Ebenfelt-Rothschild \cite{MR1668103}.
We call the uniquely defined germ $\gamma_{p_0}(\mathfrak{g})$ of the manifold constructed in Theorem \ref{NaganoThm} the local Nagano leaf of $\mathfrak{g}$ at $p_0$.
From now on all Lie algebras of ultradifferentiable vector fields that are considered are assumed to be also $\E_\M$-modules.

Following Nagano \cite{MR0199865}, c.f.\ also Baouendi-Ebenfelt-Rothschild \cite{MR1668103}, we can also give a global version of Theorem \ref{NaganoThm}.
\begin{Thm}\label{globalNaganothm}
Let $\M$ be a quasianalytic regular weight sequence. If $\mathfrak{g}$ is a Lie subalgebra of
 $\mathfrak{X}_\M(\Omega)$ then $\mathfrak{g}$ admits a foliation of $\Omega$, 
 that is a partition of $\Omega$ by maximal integral manifolds.
\end{Thm}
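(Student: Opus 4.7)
The plan is to construct the leaves as equivalence classes under reachability by iterated flows of vector fields in $\mathfrak{g}$ and to equip each class with an ultradifferentiable manifold structure by gluing the local Nagano leaves provided by Theorem~\ref{NaganoThm}. Since each $X\in\mathfrak{g}$ is of class $\E_\M$, Theorem~\ref{ClosednessODE} guarantees that its flow $\Phi^X_t$ is an $\E_\M$-diffeomorphism on its (open) domain. I would then define a relation on $\Omega$ by $p\sim q$ iff there exist $X_1,\dotsc,X_k\in\mathfrak{g}$ and $t_1,\dotsc,t_k\in\R$ with
\[q=\Phi^{X_k}_{t_k}\circ\dotsb\circ\Phi^{X_1}_{t_1}(p);\]
reflexivity, symmetry (reverse the sequence with the opposite times) and transitivity (concatenate) of $\sim$ are immediate.

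Next I would compare this relation to the local Nagano leaves. Fix $p_0\in\Omega$ and let $N_{p_0}$ be a representative of the germ $\gamma_{p_0}(\mathfrak{g})$. On the one hand, any $X\in\mathfrak{g}$ satisfies $X(q)\in\mathfrak{g}(q)=T_qN_{p_0}$ for $q\in N_{p_0}$, so the integral curve of $X$ starting in $N_{p_0}$ stays in $N_{p_0}$ by uniqueness of solutions coupled with the fact that the restriction of $X$ to $N_{p_0}$ is an $\E_\M$-vector field on the submanifold. On the other hand, choosing $X_1,\dotsc,X_d\in\mathfrak{g}$ whose values at $p_0$ form a basis of $\mathfrak{g}(p_0)$, the map
\[(t_1,\dotsc,t_d)\longmapsto\Phi^{X_d}_{t_d}\circ\dotsb\circ\Phi^{X_1}_{t_1}(p_0)\]
is an $\E_\M$-mapping into $N_{p_0}$ with invertible Jacobian at the origin, and the inverse function theorem in $\E_\M$ (Theorem~\ref{CMStability}) produces a local $\E_\M$-parametrization of a neighbourhood of $p_0$ in $N_{p_0}$. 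Taken together, the two observations identify $N_{p_0}$ locally with the set of points $\sim$-equivalent to $p_0$; in particular $\dim\mathfrak{g}(\cdot)$ is constant along $N_{p_0}$.

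Denote by $L_{p_0}$ the equivalence class of $p_0$ and declare the local Nagano leaves $N_q$, $q\in L_{p_0}$, to be its charts. Here quasianalyticity enters crucially: by the uniqueness statement of Theorem~\ref{NaganoThm}, whenever two such charts $N_q$ and $N_{q'}$ overlap they agree as germs on the intersection, so the transition maps are (restrictions of) identity maps and are automatically $\E_\M$. Hence $L_{p_0}$ acquires the structure of a connected ultradifferentiable manifold of class $\{\M\}$ whose inclusion into $\Omega$ is an immersion tangent to $\mathfrak{g}$ at every point. Maximality follows because any connected integral submanifold $N$ of $\mathfrak{g}$ through $p_0$ represents $\gamma_{p_0}(\mathfrak{g})$ and is therefore locally contained in $L_{p_0}$; propagating this identification along $N$ by invariance under short flows yields $N\subseteq L_{p_0}$. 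The equivalence classes $\{L_p : p\in\Omega\}$ are then the sought foliation.

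The main obstacle is the topological bookkeeping on $L_{p_0}$, whose manifold topology is usually strictly finer than the subspace topology inherited from $\Omega$: one has to verify Hausdorffness and second countability of $L_{p_0}$ together with the coherent gluing of the Nagano-leaf charts around every point. This is exactly the step where quasianalytic uniqueness of the Nagano germ is indispensable, since without it two Nagano germs on an overlap need not coincide and the candidate global leaf would not be a well defined manifold.
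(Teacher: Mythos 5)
Your argument is precisely the classical Nagano orbit construction that the paper itself invokes without writing out (it defers to Nagano and Baouendi--Ebenfelt--Rothschild), transported to the class $\E_\M$ via Theorems \ref{CMStability} and \ref{ClosednessODE} and the quasianalytic uniqueness of the local leaf from Theorem \ref{NaganoThm}. The topological points you flag at the end are routine (Hausdorffness is immediate since the leaf topology refines the subspace topology of $\Omega$, and second countability follows from the standard plaque-counting argument), so the proposal is correct and matches the paper's intended proof.
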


Before we close this section we need to introduce another condition for weight sequences.
Let $\M$ be a weight sequence. We say that $\M$ is of \emph{moderate growth} iff
there are constants $C$ and $\rho$ such that
\begin{equation}\tag{$\text{M}2^\prime$}\label{mg}
M_{j+k}\leq C\rho^{j+k}M_jM_k
\end{equation}
for all $(j,k)\in\N_0^2$.
Both the Gevrey sequences $\M^s$ and the sequences $\mathcal{N}^\sigma$ from  Example 
\ref{QuasiEx} satisfy \eqref{mg} for all $s$ and $\sigma$, respectively.

For a discussion of this condition, see e.g.\ Komatsu \cite{MR0320743}. 
Here we only mention two facts. First, for any weight sequence $\M$, if \eqref{mg} holds then \eqref{derivclosed} is also satified.
Furthermore, if $\M$ satisfies \eqref{mg} then there is some $s>0$ such that $\E_\M\subseteq\mathcal{G}^{1+s}$, c.f.\
e.g.\ Thilliez \cite{MR2011916}.
On the other hand consider the regular weight sequence $\mathcal{L}$ given by 
$L_0=L_1=1$ and $L_k=k!2^{k^2}$ if $k\geq 2$. Then $\G^{1+s}\subseteq\E_{\mathcal{L}}$
for all $s\geq 0$.

\section{The ultradifferentiable wavefront set}\label{SecWF}
In this and the following two sections we always assume that $\M$ is a regular weight sequence.

In 1971 H{\"o}rmander \cite{MR0294849} proved the following local characterization of $\E_\M$ via the Fourier transform:
\begin{Prop}\label{MCharFT}
Let $u\in\D^\prime (\Omega)$ and $p_0\in\Omega$. 
Then $u$ is ultradifferentiable of class $\{\M\}$ near $p_0$ 
if and only if there are an open neighbourhood $V$ of $p_0$, 
a bounded sequence $(u_N)_N\subseteq\E^\prime (U)$
such that $u|_V=(u_N)|_V$ and some constant $Q>0$ so that
\begin{equation*}
\sup_{\substack{\xi\in\R^n\\ 
N\in\N_0}}\frac{\lvert\xi\rvert^N\lvert \hat{u}_N(\xi)\rvert}{Q^N M_N}<\infty.
\end{equation*}
\end{Prop}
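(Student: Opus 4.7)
The plan is to prove the two directions of the equivalence separately, in the classical spirit of H\"ormander's original argument.

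For the forward direction, suppose $u|_K \in \E_\M(K)$ for some compact neighborhood $K$ of $p_0$, and fix an open $V \Subset K^\circ$ containing $p_0$. The essential ingredient is a sequence of \emph{$\M$-adapted cutoffs}: smooth functions $\chi_N \in \D(K^\circ)$ with $\chi_N \equiv 1$ on $V$ and
\begin{equation*}
\bigl\lvert D^\beta \chi_N(x)\bigr\rvert \leq A_0\, A_1^{|\beta|} M_{|\beta|} \qquad \text{for all } |\beta| \leq N,
\end{equation*}
with constants independent of $N$. Such cutoffs exist for any regular weight sequence by the classical construction that convolves the characteristic function of a nested neighborhood system with $N$ smooth bumps of comparable mass; this works even in the quasianalytic case because the bound is only required up to finite order $N$. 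Set $u_N := \chi_N u$, which lies in $\E'(K^\circ)$, agrees with $u$ on $V$, and forms a bounded sequence there. The Leibniz rule combined with the bound on $u$ and on $\chi_N$, together with logarithmic convexity \eqref{stlogconvex} of $\M$ (which gives $M_{|\beta|} M_{|\alpha-\beta|} \leq M_{|\alpha|}$ up to constants), yields
\begin{equation*}
\bigl\lvert D^\alpha u_N(x)\bigr\rvert \leq C B^{|\alpha|} M_{|\alpha|} \qquad \text{for } |\alpha| \leq N.
\end{equation*}
Since $\lvert \xi^\alpha \hat{u}_N(\xi)\rvert \leq \|D^\alpha u_N\|_{L^1}$, choosing $\alpha = N e_j$ for $j$ maximizing $|\xi_j|$ and using $|\xi_j|^N \geq n^{-N/2} |\xi|^N$ gives the desired supremum estimate with $Q = B\sqrt{n}$.

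For the reverse direction, assume the Fourier estimate and fix $x_0 \in V$ with a smaller neighborhood $V' \Subset V$. Pick $\phi \in \D(V)$ with $\phi \equiv 1$ on a neighborhood of $\overline{V'}$. Since $u|_V = u_N|_V$, we have $\phi u = \phi u_N$ globally as distributions, so for $y \in V'$ Fourier inversion gives
\begin{equation*}
D^\alpha u(y) = D^\alpha(\phi u_N)(y) = (2\pi)^{-n}\! \int (i\xi)^\alpha e^{iy\cdot\xi}\, \widehat{\phi u_N}(\xi)\, d\xi.
\end{equation*}
The key technical step is to estimate $\widehat{\phi u_N} = (2\pi)^{-n} \hat{\phi} \ast \hat{u}_N$. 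Splitting the convolution integral into the regions $|\eta| \leq |\xi|/2$ and $|\eta| \geq |\xi|/2$, and using that $\hat{\phi}$ is Schwartz while $|\hat{u}_N(\eta)| \leq AQ^N M_N |\eta|^{-N}$ (for $|\eta| \geq 1$), combined with the uniform $\E'$-bound for $\widehat{u}_N$ near the origin, yields
\begin{equation*}
\bigl\lvert \widehat{\phi u_N}(\xi)\bigr\rvert \leq C_1 Q^N M_N (1+|\xi|)^{-N}.
\end{equation*}
Inserting this into the inversion formula and choosing $N = |\alpha| + n + 1$ makes the integral $\int (1+|\xi|)^{|\alpha|-N}d\xi$ convergent, giving $|D^\alpha u(y)| \leq C_2 Q^{|\alpha|+n+1} M_{|\alpha|+n+1}$ uniformly in $y \in V'$. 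A final application of condition \eqref{derivclosed}, iterated $n+1$ times to absorb the shift, yields $M_{|\alpha|+n+1} \leq C_3 H^{|\alpha|} M_{|\alpha|}$, and hence $u \in \E_\M(V')$.

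The principal obstacle is the construction of the sequence $\chi_N$: in the non-quasianalytic case one may take a single cutoff $\chi \in \D_\M$ and the argument trivializes, but for general (including quasianalytic) regular $\M$ one must carefully build $\chi_N$ as an iterated convolution so the bound \eqref{stlogconvex} $+ $ \eqref{derivclosed} can be exploited only up to the truncation order $N$. A secondary but nontrivial step is the convolution estimate for $\widehat{\phi u_N}$ in the reverse direction: a naive application of Young's inequality would lose the $N$-dependence, so the two-region splitting that exploits both the rapid decay of $\hat{\phi}$ and the controlled polynomial decay of $\hat{u}_N$ is essential.
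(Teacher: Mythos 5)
The paper does not prove this proposition itself (it is quoted from H\"ormander \cite{MR0294849}), so I judge your argument on its own terms. Both directions contain a genuine gap, and in both cases the gap is located exactly where the quasianalytic case --- the case this formulation exists to handle --- bites.

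In the forward direction, the cutoffs you postulate do not exist when $\M$ is quasianalytic. You ask for $\chi_N\in\D(K^\circ)$, $\chi_N\equiv 1$ on $V$, with $\lvert D^\beta\chi_N\rvert\leq A_0A_1^{\lvert\beta\rvert}M_{\lvert\beta\rvert}$ for $\lvert\beta\rvert\leq N$ and $A_0,A_1$ \emph{independent of $N$}. The iterated convolution $\mathbf{1}_W\ast\phi_{d_1}\ast\dots\ast\phi_{d_N}$ gives $\lvert D^\beta\chi_N\rvert\lesssim C^{\lvert\beta\rvert}(d_1\cdots d_{\lvert\beta\rvert})^{-1}$, so to reach $A_1^{k}M_k$ you are forced to take $d_j\gtrsim M_{j-1}/M_j$; but the widths must satisfy $\sum_{j\leq N}d_j\leq\mathrm{dist}(V,\partial K)$, and $\sum_j M_{j-1}/M_j$ diverges precisely when \eqref{quasiCond} holds. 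This is not an artifact of the construction: if such $\chi_N$ existed with $N$-independent constants, an Arzel\`a--Ascoli diagonal argument (equicontinuity at order $k$ follows from the bound at order $k+1$, available for all $N\geq k+1$) would produce a limit $\chi\in\D(K^\circ)$, $\chi\equiv 1$ on $V$, satisfying the bound for \emph{all} $\beta$, i.e.\ a nontrivial element of $\D_\M$ --- contradicting quasianalyticity. The repair is to use H\"ormander's cutoffs (\cite[Theorem 1.4.2]{MR1996773}, quoted in Proposition \ref{WF-M Charakterisierung} and \eqref{Testfunctionest1}), which satisfy $\lvert D^{\gamma+\beta}\chi_N\rvert\leq C_\gamma(C_\gamma N)^{\lvert\beta\rvert}$ for $\lvert\beta\rvert\leq N$; since $N\leq\delta M_N^{1/N}$ by \eqref{analyticincl} and $M_N^{1/N}$ is increasing by \eqref{stlogconvex}, one gets $(CN)^{\lvert\beta\rvert}\leq(C\delta)^{\lvert\beta\rvert}M_N^{\lvert\beta\rvert/N}$, and the Leibniz computation at order exactly $\lvert\alpha\rvert=N$ (which is all you use, since you take $\alpha=Ne_j$) still closes because $M_{N-\lvert\beta\rvert}M_N^{\lvert\beta\rvert/N}\leq M_N$.

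In the reverse direction, the estimate $\lvert\widehat{\phi u_N}(\xi)\rvert\leq C_1Q^NM_N(1+\lvert\xi\rvert)^{-N}$ for a single fixed $\phi\in\D(V)$ is not attainable for quasianalytic $\M$. In your splitting, the region $\lvert\eta\rvert\leq\lvert\xi\rvert/2$ (where $\hat u_N$ only obeys the polynomial $\E'$-bound) forces you to extract decay of order $N$ from $\hat\phi(\xi-\eta)$ with constants controlled by $Q^NM_N$, uniformly in $N$; this amounts to $\sup_\zeta(1+\lvert\zeta\rvert)^{N}\lvert\hat\phi(\zeta)\rvert\lesssim Q^NM_N$ for all $N$, i.e.\ $\phi\in\D_\M$, which is empty in the quasianalytic case. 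Fortunately the cutoff $\phi$ is unnecessary: for $N=\lvert\alpha\rvert+n+1$ the hypothesis already gives $u_N\in C^{\lvert\alpha\rvert}(\R^n)$ with $D^\alpha u_N(y)=(2\pi)^{-n}\int(i\xi)^\alpha e^{iy\xi}\hat u_N(\xi)\,d\xi$; splitting that integral at $\lvert\xi\rvert=1$ and using the Banach--Steinhaus bound $\lvert\hat u_N(\xi)\rvert\leq c(1+\lvert\xi\rvert)^\mu$ near the origin yields $\lvert D^\alpha u_N\rvert\leq CQ^NM_N$ everywhere, and since $u=u_N$ on $V$ this bounds $D^\alpha u$ on $V$ directly. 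Your final step $M_{\lvert\alpha\rvert+n+1}\leq CH^{\lvert\alpha\rvert}M_{\lvert\alpha\rvert}$ via \eqref{derivclosed} is then correct and finishes the proof.
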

Subsequently he used this fact to define analogously to the smooth category:
\begin{Def}\label{WF-M Def1}
Let $u\in\D^\prime (\Omega)$ and $(x_0,\xi_0)\in T^*\Omega\!\setminus\!\{ 0\}$.
We say that $u$ is \emph{microlocally ultradifferentiable of class $\{\M\}$} at $(x_0,\xi_0)$ 
iff there is a bounded sequence 
$(u_N)_N\subseteq\E^\prime (\Omega)$ such that $u_N\vert_V\equiv u\vert_V$, where $V\in\U(x_0)$
 and a conic neighbourhood $\Gamma$ of $\xi_0$  such that for some constant $Q>0$
\begin{equation}\label{WF-M Estimate1}
\sup_{\substack{\xi\in\Gamma\\
 N\in\N_0}} \frac{\lvert\xi\rvert^N\lvert\hat{u}_N\rvert}{Q^N M_N}<\infty.
\end{equation}
The ultradifferentiable wavefront set $\WF_\M u$ is then defined as
\begin{equation*}
\WF_\M u:=\bigl\{(x,\xi)\in T^*\Omega\!\setminus\!\{ 0\}\mid u\text{ is not microlocally
	ultradiff.\ of class }\{\M\}
\text{ at }(x,\xi)\bigr\}.
\end{equation*}
\end{Def}
\begin{Rem}\label{HoeClassCom}
We need to point out that H{\"o}rmander in \cite{MR0294849} defined $\WF_\M$ for weight sequences that 
satisfy weaker conditions then those we imposed in Definition \ref{Link}. 
He required, as we have done, \eqref{derivclosed} and that $\O\subseteq\E_{\M}$,
but \eqref{stlogconvex} is replaced by the monotonic growth of the sequence 
\begin{equation}\label{HoeClass}
L_N=(M_N)^{\frac{1}{N}}.
\end{equation}
This condition still implies that $\E_{\M}$ is an algebra but gives only that $\E_{\M}$ is closed 
under composition with analytic mappings. 

More precisely,  in terms of the sequence $(L_N)_N$ the conditions that H\"ormander imposed take the following form.
First, $N\leq L_N$ and $L_{N+1}\leq C L_N$ for all $N$ and a constant $C>0$ independent of $N$. 
Furthermore as mentioned before the sequence $(L_N)_N$  is also assumed to be increasing.
 
 Note that his classes might not even be defined by weight sequences in the sense
of section \ref{sec:DC}.
Hence H{\"o}rmander in \cite{MR1996773} was able to define 
$\WF_{\M}u$ for distributions $u$ on real analytic manifolds 
but not on arbitrary ultradifferentiable manifolds of class $\{\M\}$;
note that the implicit function theorem may not hold in an arbitrary ultradifferentiable class defined by
weight sequences obeying his conditions.
Similarly he proved that 
\begin{equation*}
\WF_\M u\subseteq \WF_\M Pu\cup \Char P
\end{equation*}
 for linear partial differential operators $P$ with analytic coefficients but not for operators whose coefficients might be only of class $\{\M\}$.

As mentioned before it is possible to modify the arguments of H{\"o}rmander in the case of regular weight sequences to show that
the above inclusion holds for partial differential operators with ultradifferentiable coefficients as long as $\M$ is regular and of moderate growth. 
Similarly we are able to define $\WF_\M u$ for distributions defined on manifolds of class $\{\M\}$ (for regular $\M$), 
in this instance using 
Dyn'kin's almost-analytic extension of ultradifferentiable functions 
(i.e.\ Corollary \ref{CharMalmostanalytic}).

However, since regular weight sequences also fulfill the conditions of 
H{\"o}rmander we can use all of his results on $\WF_{\M}$.
Indeed, in terms of $L_N$, we have that \eqref{analyticincl} implies that $k\leq \gamma L_k$ for all $k\in\N_0$ and a constant $\gamma>0$ independent of $k$ 
by Sterling's formula whereas
\eqref{derivclosed} is equivalent to the existence of a constant $A>0$ such that $L_k\leq A L_{k-1}$.
We note that the last estimate implies
$L_N\leq A^N$
for $N\in\N_0$ since $L_1=1$. On the other hand,
 it is well known that if $(M_N)_N$ satisfies \eqref{stlogconvex} 
then $(L_N)_N$ is an increasing sequence, see e.g.\ Mandelbrojt \cite{MR0051893}.
\end{Rem}
The following result by H{\"o}rmander shows that we may choose the distributions $u_N$ 
in Definition \ref{WF-M Def1} in a special manner.
\begin{Prop}[\cite{MR1996773} Lemma 8.4.4.]\label{WF-M Charakterisierung}
Let $u\in\D^\prime(\Omega)$ and let $K\subset\Omega$ be compact, $F\subseteq\R^n$ a closed cone 
such that $\WF_\M u\cap (K\times F)=\emptyset$. If $\chi_N\in\D (K)$ and for all $\alpha$
\begin{equation*}
\bigl\lvert D^{\alpha +\beta}\chi_N\rvert\leq C_{\alpha} h_{\alpha}^{\lvert\beta\rvert}M_N^{\tfrac{\lvert\beta
\rvert}{N}} \qquad \lvert\beta\rvert\leq N
\end{equation*}
for some constants $C_\alpha$, $h_{\alpha}>0$ 
then it follows that $\chi_N u$ is bounded in $\E^{\prime S}$ if $u$ is of order $S$ in a neighbourhood 
of $K$, and further
\begin{equation*}
\lvert\widehat{\chi_N u}(\xi)\rvert\leq C \frac{Q^N M_N}{\lvert\xi\rvert^N} \qquad N\in\N,\; \xi\in F
\end{equation*}
for some constants $C,Q>0$.
\end{Prop}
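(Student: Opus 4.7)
The overall strategy is to reduce the Fourier estimate to a direct control of the convolution $\hat\chi_N*\hat u_N$. To begin, I would apply Definition \ref{WF-M Def1} at every point of $K\times(F\cap S^{n-1})$ and use compactness, together with a partition of unity subordinate to a finite subcover, to patch the local representatives into a single bounded sequence $(u_N)_N\subseteq\E^\prime(\Omega)$. This yields an open neighbourhood $U\supseteq K$, an open conic neighbourhood $\Gamma$ of $F\setminus\{0\}$, and constants $C_0,Q>0$ such that $u_N\equiv u$ on $U$ and
\begin{equation*}
|\hat u_N(\xi)|\leq C_0Q^NM_N/|\xi|^N,\qquad \xi\in\Gamma,\;N\in\N_0.
\end{equation*}
Since $\supp\chi_N\subseteq K\subset U$ one has $\chi_Nu=\chi_Nu_N$ and hence $\widehat{\chi_Nu}=(2\pi)^{-n}\hat\chi_N*\hat u_N$. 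The assertion that $(\chi_Nu)_N$ is bounded in $\E^{\prime S}$ then follows from $(\chi_Nu)(\psi)=u(\chi_N\psi)$ and Leibniz, using the uniform bound $|D^\alpha\chi_N|\leq C_\alpha$ obtained by setting $\beta=0$ in the hypothesis.

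Next I would derive two complementary estimates on $\hat\chi_N$. Integration by parts combined with the hypothesis applied with $|\alpha|=K$ and $|\beta|=N$, which controls derivatives of order $N+K$ by $C_\alpha h_\alpha^NM_N$, gives
\begin{equation*}
|\hat\chi_N(\eta)|\leq A_K\tilde h_K^NM_N/|\eta|^{N+K},\qquad \eta\neq 0,\;K\in\N_0,
\end{equation*}
with $A_K,\tilde h_K$ depending on $K$ but not on $N$. Taking instead $\beta=0$ with $|\alpha|\leq 2p$ yields $|\hat\chi_N(\eta)|\leq B_p(1+|\eta|)^{-2p}$ uniformly in $N$; in particular $\|\hat\chi_N\|_{L^1}$ is bounded independently of $N$.

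Now fix $\xi\in F$ with $|\xi|\geq 1$ and split the convolution integral at $|\eta|=\delta|\xi|$, with $\delta\in(0,1/2)$ chosen so that $|\eta|\leq\delta|\xi|$ forces $\xi-\eta\in\Gamma$ and $|\xi-\eta|\geq(1-\delta)|\xi|$. On the inner region, the decay of $\hat u_N$ on $\Gamma$ and the uniform $L^1$-bound on $\hat\chi_N$ give a contribution of order $(Q/(1-\delta))^NM_N/|\xi|^N$. On the outer region only the polynomial bound $|\hat u_N(\xi-\eta)|\leq C_1(1+|\xi-\eta|)^S$ is available (uniform in $N$, since $(u_N)_N$ is bounded in $\E^\prime$); to compensate I would apply the first estimate on $\hat\chi_N$ with $K=n+S+1$, which furnishes the extra decay $|\eta|^{-K}$ needed to absorb both the polynomial growth of $(1+|\xi-\eta|)^S$ and the volume of the region. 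Splitting the outer region further into $\{\delta|\xi|<|\eta|\leq 2|\xi|\}$ and $\{|\eta|>2|\xi|\}$ and performing the two elementary radial integrations gives a contribution of order $\tilde h_K^NM_N|\xi|^{S+n-N-K}=\tilde h_K^NM_N|\xi|^{-N-1}$. Summing the two pieces yields the desired estimate for $|\xi|\geq 1$; the case $|\xi|\leq 1$ is immediate from the $\E^{\prime S}$-bound.

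The main obstacle is recognising the role of the auxiliary multi-index $\alpha$ in the hypothesis: the $\beta$-part alone only produces decay of $\hat\chi_N$ of the form $M_N/|\eta|^N$, which is not integrable against the tempered bound on $\hat u_N$ in the outer region. It is precisely the additional $K=n+S+1$ derivatives of $\chi_N$ controlled by the hypothesis that supply the further polynomial decay required to close the convolution estimate without any loss in the $M_N$-rate.
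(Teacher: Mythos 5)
Your central convolution estimate is correct and is exactly the argument of H\"ormander's Lemma 8.4.4, which is all the paper itself cites for this proposition: the two bounds on $\hat\chi_N$ (one trading $|\beta|=N$ derivatives for $M_N/|\eta|^{N+K}$, one uniform in $N$ giving an $L^1$ bound), the split of $\hat\chi_N\ast\hat u_N$ at $|\eta|=\delta|\xi|$, and the choice $K=n+S+1$ to absorb the tempered growth of $\hat u_N$ in the outer region are all right, as is the $\E^{\prime S}$-boundedness via Leibniz. The gap is in your first step, the claim that a partition of unity lets you patch the local representatives of Definition \ref{WF-M Def1} into a \emph{single} bounded sequence $(u_N)_N$ equal to $u$ near all of $K$ and decaying on a conic neighbourhood of all of $F$. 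If $\phi_i$ is a fixed smooth cutoff, the decay of $\widehat{\phi_i u_N^{(i)}}=(2\pi)^{-n}\hat\phi_i\ast\hat u_N^{(i)}$ on a subcone is precisely an instance of the estimate you are proving, and it fails for generic $\phi_i$: in the region $|\eta|>\delta|\xi|$ you only have $|\hat\phi_i(\eta)|\leq C_k|\eta|^{-k}$ with $C_k$ controlled by the $k$-th derivatives of $\phi_i$, and for $k\approx N$ these constants grow uncontrollably in $N$, destroying the rate $Q^NM_N$. In the quasianalytic case -- the case this paper is written for -- no fixed compactly supported cutoff has derivatives bounded by $CQ^kM_k$, so this cannot be repaired by taking $\phi_i$ of class $\{\M\}$. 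Moreover, a spatial partition of unity does nothing to merge representatives attached to \emph{different frequency cones} $\Gamma_j$ into one sequence decaying on all of $F$; these are genuinely different distributions.

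The standard repair, which is what H\"ormander does, is to localize $\chi_N$ rather than $u$: write $\chi_N=\sum_i\chi_N\psi_{i,N}$, where $(\psi_{i,N})_N$ are the $N$-dependent cutoffs of \cite[Theorem 1.4.2]{MR1996773} (the same ones used in the proof of Theorem \ref{Theorem-M-BVWF}) subordinate to a finite cover of $K$ by neighbourhoods on which Definition \ref{WF-M Def1} applies. By Leibniz and the inequality $N\leq\gamma M_N^{1/N}$ from Remark \ref{HoeClassCom}, each product $\chi_N\psi_{i,N}$ satisfies the same derivative hypothesis as $\chi_N$; then for each $i$ and each cone $\Gamma_j$ of a finite conic cover of $F\cap S^{n-1}$ one has $\chi_N\psi_{i,N}u=\chi_N\psi_{i,N}u_N^{(i,j)}$, and your convolution estimate applies verbatim for $\xi\in F\cap\overline{\Gamma_j'}$. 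Summing the finitely many pieces gives the claim; there is no need to ever produce one global sequence. With this reorganization everything after your first step goes through unchanged.
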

We summarize the basic properties of $\WF_\M$ according to H{\"o}rmander \cite{MR1996773}.
\begin{Thm}[\cite{MR1996773} Theorem 8.4.5-8.4.7]\label{WF-MProperties}
Let $u\in\D^{\prime}(\Omega)$ and $\M$, $\mathcal{N}$ be two weight sequences. Then we have 
\begin{enumerate}
\item $\WF_\M u$ is a closed conic subset of $\Omega\times\R^n\!\setminus\!\{0\}$.
\item The projection of $\WF_\M u$ in $\Omega$ is
\begin{equation*}
\pi_1\bigl(\WF_\M u\bigr)=\mathrm{sing}\,\supp_{\M} u=
\overline{\bigl\{x\in\Omega \;\vert\; \nexists V\in\U(x):\, u\vert_V\in\E_\M (V)\}}
\end{equation*}
\item $\WF u\subseteq\WF_{\mathcal{N}}u\subseteq \WF_{\M}u$ if $\M\preccurlyeq \mathcal{N}$.
\item If $P=\sum p_{\alpha}D^{\alpha}$ is a partial differential operator with ultradifferentiable coefficents
of class $\{\M\}$ then $\WF_{\M} Pu\subseteq \WF_\M u$.
\end{enumerate}
\end{Thm}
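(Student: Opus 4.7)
Parts (1) and (2) are relatively formal. For (1), if $(x_0,\xi_0)\notin\WF_\M u$, the witnessing data $(u_N)$, $V$ and $\Gamma$ from Definition \ref{WF-M Def1} serve every $(x,\xi)\in V\times\Gamma$ and $\Gamma$ is conic, so the complement of $\WF_\M u$ is open and conic in $T^*\Omega\setminus 0$. For (2), the inclusion $\pi_1(\WF_\M u)\subseteq\mathrm{sing}\,\supp_{\M}u$ is immediate from Proposition \ref{MCharFT}: a representative of $u|_V\in\E_\M(V)$ supplies a bounded sequence satisfying a \emph{global} Fourier bound of the form \eqref{WF-M Estimate1}, which rules out every direction above $V$. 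For the reverse inclusion, suppose the entire fiber above $x_0$ avoids $\WF_\M u$; compactness of $S^{n-1}$ combined with (1) produces finitely many open conic sets $\Gamma_1,\dots,\Gamma_r$ covering $\R^n\!\setminus\! 0$, a common neighbourhood $V$ of $x_0$, and sequences $(u_N^{(j)})_N$ witnessing each $\Gamma_j$ with a uniform constant $Q$. Fix $\chi\in\D(V)$ equal to one near $x_0$ and a partition of unity $\{\psi_j\}$ on $S^{n-1}$ with $\supp\psi_j\subset\Gamma_j$: since $\chi u_N^{(j)}=\chi u$ for every $j$ and $N$, the decomposition $\widehat{\chi u}(\xi)=\sum_j\psi_j(\xi/|\xi|)\widehat{\chi u_N^{(j)}}(\xi)$ meets the hypothesis of Proposition \ref{MCharFT}, so $\chi u\in\E_\M$ near $x_0$.

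Part (3) reduces to comparing estimates. The relation $\M\preccurlyeq\mathcal{N}$ yields a constant $C>0$ with $M_k\leq C^k N_k$, and this immediately turns any $\{\M\}$-bound $Q^N M_N$ into an $\{\mathcal{N}\}$-bound $(QC)^N N_N$, proving $\WF_\mathcal{N} u\subseteq\WF_\M u$. The inclusion $\WF u\subseteq\WF_\mathcal{N} u$ is the standard comparison of classical and ultradifferentiable wavefront sets: fixing $\chi\in\D(V)$ with $\chi\equiv 1$ near $x_0$ one has $\chi u=\chi u_N$ for every $N$, and splitting the convolution $(2\pi)^{-n}\hat\chi*\hat u_N$ into frequencies inside and outside a conic neighbourhood of $\xi_0$ yields rapid decay of $\widehat{\chi u}$ on a narrower cone, using the rapid decay of $\hat\chi$ off the diagonal together with the $\{\mathcal{N}\}$-bound $|\hat u_N(\eta)|\leq Q^N N_N|\eta|^{-N}$ for each fixed $N$.

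Part (4) is the substantive claim. Given $(x_0,\xi_0)\notin\WF_\M u$, apply Proposition \ref{WF-M Charakterisierung} to obtain cutoffs $(\chi_N)\subset\D(K)$ on a compact neighbourhood $K$ of $x_0$ satisfying the stated derivative bounds and $|\widehat{\chi_N u}(\xi)|\leq CQ^N M_N|\xi|^{-N}$ on a closed conic neighbourhood $F$ of $\xi_0$. The plan is to use the shifted sequence $v_N:=\chi_{N+m}Pu$ (with $m=\mathrm{ord}\,P$) as witness for $Pu$. Expanding via Leibniz,
\begin{equation*}
\chi_{N+m}Pu=\sum_{|\alpha|\leq m}p_\alpha\Bigl(D^\alpha(\chi_{N+m}u)-\sum_{0<\beta\leq\alpha}\binom{\alpha}{\beta}(D^\beta\chi_{N+m})(D^{\alpha-\beta}u)\Bigr),
\end{equation*}
the principal contribution to $\widehat{v_N}$ on $F$ has the form $\xi^\alpha\widehat{p_\alpha\chi_{N+m}u}(\xi)$ (up to factors of $i$), which after convolution with the $\E_\M$-localization of $p_\alpha$ inherits the decay $Q^{N+m}M_{N+m}|\xi|^{-(N+m)}$ of $\widehat{\chi_{N+m}u}$; the commutator contributions likewise carry the appropriate decay, because the derivatives $D^\beta\chi_{N+m}$ are controlled by $M_{N+m}^{|\beta|/(N+m)}$ for $|\beta|\leq m$. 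The main obstacle is the bookkeeping: the factor $|\xi|^m$ inherited from $\xi^\alpha$ must be absorbed by the shift from $|\xi|^{-N}$ to $|\xi|^{-(N+m)}$, and the replacement of $M_N$ by $M_{N+m}$ has to be reversible up to exponential constants. Both are handled by \eqref{derivclosed}, which yields $M_{N+m}\leq A^N M_N$ with $A$ depending only on $m$; the extra factor $A^N$ is reabsorbed into a rescaled $Q'=AQ$. This produces the required bound $|\widehat{v_N}(\xi)|\leq C'Q'^N M_N|\xi|^{-N}$ on a smaller conic neighbourhood of $\xi_0$, witnessing $(x_0,\xi_0)\notin\WF_\M Pu$.
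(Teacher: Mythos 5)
The paper does not actually prove this theorem: it is imported verbatim from H\"ormander \cite{MR1996773}, Theorems 8.4.5--8.4.7, so your proposal can only be measured against H\"ormander's arguments and the tools the paper does state, namely Propositions \ref{MCharFT} and \ref{WF-M Charakterisierung}. Parts (1) and (3) of your write-up are correct; in (3) the fixed cutoff $\chi$ is harmless because for the classical wavefront set the constant in the bound $C_k\lvert\xi\rvert^{-k}$ is allowed to depend on $k$.

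The genuine gap sits in the reverse inclusion of (2) and in (4), and it is the same gap both times: you control $\widehat{\chi v}$ (resp.\ the transform of $p_\alpha$ times something) by convolving $\hat\chi$ (resp.\ the transform of a \emph{fixed} localization of $p_\alpha$) against a transform known to decay like $Q^NM_N\lvert\xi\rvert^{-N}$ on a cone, and you need the output to satisfy the same shape of estimate \emph{uniformly in $N$}. In the far-field part of that convolution the only available bound on $\hat u_N$ is polynomial (Banach--Steinhaus), so the decay $Q^NM_N\lvert\xi\rvert^{-N}$ must be produced by $\hat\chi(\xi-\eta)$ itself on $\lvert\xi-\eta\rvert\gtrsim\lvert\xi\rvert$; that forces $\lvert\hat\chi(\zeta)\rvert\lesssim Q^NM_N\lvert\zeta\rvert^{-N}$ for all $N$, i.e.\ essentially $\chi\in\D_\M$. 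For quasianalytic $\M$ --- the case this paper is written for --- no such nonzero $\chi$ exists, so the identity $\widehat{\chi u}=\sum_j\psi_j(\xi/\lvert\xi\rvert)\widehat{\chi u_N^{(j)}}$ with a single fixed $\chi$ does not ``meet the hypothesis of Proposition \ref{MCharFT}''. This is exactly what Proposition \ref{WF-M Charakterisierung} (H\"ormander's Lemma 8.4.4) exists for: one must use $N$-dependent cutoffs $\chi_N$ whose derivatives up to order $N$ are bounded by $h^{\lvert\beta\rvert}M_N^{\lvert\beta\rvert/N}$. The repair of (2) is short: by (1) and compactness of $S^{n-1}$, $\WF_\M u\cap\bigl(K\times(\R^n\setminus\{0\})\bigr)=\emptyset$ for a compact neighbourhood $K$ of $x_0$; apply Proposition \ref{WF-M Charakterisierung} with $F=\R^n\setminus\{0\}$ and cutoffs $\chi_N\equiv1$ near $x_0$, then Proposition \ref{MCharFT}. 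In (4) you do invoke Proposition \ref{WF-M Charakterisierung} for the cutoffs of $u$, which is right, but the phrase ``after convolution with the $\E_\M$-localization of $p_\alpha$ inherits the decay'' hides the identical difficulty for the coefficients. The correct route is to check that the products $\chi_{N+m}\,p_\alpha$ and $D^\beta\chi_{N+m}\cdot p_\alpha$, $\lvert\beta\rvert\le m$, again satisfy the derivative bounds required in Proposition \ref{WF-M Charakterisierung} --- this uses $p_\alpha\in\E_\M$ together with $M_k\le M_N^{k/N}$ for $k\le N$, a consequence of \eqref{stlogconvex} --- and then to apply that proposition to these modified cutoffs. Once that is in place, your bookkeeping with the factor $\lvert\xi\rvert^m$ and with $M_{N+m}\le A^NM_N$ via \eqref{derivclosed} is correct.
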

Additionally we note that $\WF_\M u$ satisfies the following \emph{microlocal reflection property}:
\begin{equation}\label{microreflprop}
(x,\xi)\notin\WF_\M u \Longleftrightarrow (x,-\xi)\notin\WF_\M \bar{u}
\end{equation}
In particular, if $u$ is a real-valued distribution, i.e.\ $\bar{u}=u$, then 
$\WF_\M u\vert_{x}:=\{\xi\in\R^n\mid (x,\xi)\in\WF_\M u\}$ is symmetric at the origin.
\begin{Ex}
It is easy to see that $\WF_\M \delta_p=\{p\}\times\R^n\!\setminus\!\{0\}$ 
for any regular weight sequence $\M$.
\end{Ex}
\begin{Rem}\label{Discussion}
The complicated form of Definition \ref{WF-M Def1} compared with the definition of the smooth wavefront set 
stems from the fact that quasianalytic weight sequences are allowed. 
Thus in general there may not be any nontrivial test functions of class $\{\M\}$.
However if $\D_{\M} \neq \{ 0\}$ then we can choose in Definition \ref{WF-M Def1} the constant sequence 
$u_N=\varphi u$ for some $\varphi\in\D_{\M}(\Omega)$ with $\varphi(x_0)=1$
and \eqref{WF-M Estimate1} is equivalent to 
\begin{equation*}
\exists C,Q>0\quad\bigl\lvert\widehat{\varphi u}(\xi)\bigr\rvert\leq C \inf_{N}Q^N M_N\lvert\xi\rvert^{-N}\qquad \forall\xi\in\Gamma
\end{equation*}
thus \ref{con} implies
\begin{equation*}
\bigl\lvert\widehat{\varphi u}(\xi)\bigr\rvert\leq C\tilde{h}_\M\biggl(\frac{Q}{\lvert\xi\rvert}\biggr)
\leq C\exp\biggl(-\omega_\M\biggl(\frac{\lvert\xi\rvert}{Q}\biggr)\biggr).
\end{equation*}
We conclude that (c.f. e.g. Rodino \cite{MR1249275} in the case of Gevrey-classes) that for non-quasianalytic weight sequences $\M$ 
\eqref{WF-M Estimate1} is equivalent to 
\begin{equation*}
\exists Q>0 \quad\sup_{\xi\in\Gamma}e^{\omega_\M(Q\lvert\xi\rvert)}\bigl\lvert\widehat{\varphi u}(\xi)\bigr\rvert<\infty.
\end{equation*}
Proposition \ref{MCharFT} is then only a restatement to the well-known fact that for non-quasianalytic 
weight sequences we have that $\varphi\in\D_\M$ if and only if 
$\hat{\varphi}\leq C e^{-\omega_\M(Q\lvert\xi\rvert)}$ for some constants $C, Q$.
Therefore it is possible to define ultradifferentiable classes using appropriately defined weight functions 
instead of weight sequences, see e.g.\ in a somehow generalized setting Bj{\"o}rk \cite{MR0203201}.
However, this approach leads only to non-quasianalytic spaces. 
This restriction was removed by Braun-Meise-Taylor \cite{MR1052587} who reformulated the defining estimates of these classes to 
allow also quasianalytic classes. 
A wavefront set relative to these classes was introduced in Albanese-Jornet-Oliaro \cite{MR2595651}, c.f.\ section \ref{sec:elliptic}.
The complicated connection between the classes defined by weight sequences 
and those given by weight functions was investigated in Bonet-Meise-Melikhov \cite{MR2387040}.
Recently a new approach to define spaces of ultradifferentiable functions was introduced in Rainer-Schindl \cite{MR3285413},
which encompasses the classes given by weight sequences and weight functions, see also Rainer-Schindl \cite{MR3462072}.
\end{Rem}
\section{Invariance of the wavefront set under ultradifferentiable mappings}\label{sec:BV}
Our aim in this section is to develop, using the almost-analytic extension of functions in $\E_\M$ given by Dyn'kin, 
a geometric description of $\WF_\M$ similarly to the one that was presented e.g.\
by Liess \cite[section 4]{MR1806500} for the smooth wavefront set. 

We need to fix some notations: If $\Gamma\subseteq\R^d$ is a cone and $r>0$ then 
\begin{equation*}
\Gamma_r :=\bigl\{y\in \Gamma \mid\, \lvert y\rvert <r\bigr\}.
\end{equation*} 
If $\Gamma^{\prime}\subseteq\Gamma$ is also a cone we write 
$\Gamma^{\prime}\subset\subset\Gamma$ iff 
$(\Gamma^{\prime}\cap S^{d-1})\subset\subset (\Gamma\cap S^{d-1})$.

Analogous to Liess \cite[section 2.1]{MR1806500} 
in the smooth category 
we say that, if $\M$ is a weight sequence, a function 
$F\in\E(\Omega\times U\times\Gamma_r)$, $U\subseteq\R^d$ open, is 
\emph{$\M$-almost analytic} in the variables $(x,y)\in U\times \Gamma_r$ with parameter 
$x^\prime\in\Omega$ iff for all  $K\subset\subset\Omega$, 
$L\subset\subset U$ and cones $\Gamma^{\prime}\subset\subset\Gamma$ there are constants 
$C,Q>0$ such that for some $r^\prime$ we have
\begin{equation}
\biggl\lvert\frac{\partial F}{\partial \bar{z}_j}(x^\prime,x,y)\biggr\rvert \leq Ch_\M (Q\lvert y\rvert) 
\qquad (x^\prime,x,y)\in K\times L\times\Gamma^{\prime}_{r^\prime},\;j=1,\dotsc,d
\end{equation}
where $\tfrac{\partial}{\partial \bar{z}_j}=\tfrac{1}{2}(\partial_{x_j}+i\partial_{y_j})$ 
and $h_\M$ is  the weight associated to the regular weight sequence $\M$ as defined by \eqref{Def:weight}.

We may also say generally that a function $g\in\CC (\Omega\times U\times\Gamma_r)$ 
is of \emph{slow growth} in $ y\in\Gamma_r$ 
if for all $K\subset\subset\Omega$, $L\subset\subset U$ and $\Gamma^\prime\subset\subset\Gamma$
there are constants $c,k>0$ such that
\begin{equation}\label{temperategrowth}
\lvert g(x^\prime,x,y)\rvert\leq c \lvert y\rvert ^{-k} \qquad (x^\prime,x,y)\in K\times L\times
\Gamma^{\prime}_r.
\end{equation}
The next theorem is a generalization of \cite[Theorem 4.4.8]{MR1996773}.
\begin{Thm}\label{Theorem-M-BVWF}
Let $F\in \E(\Omega\times U\times \Gamma_r)$ be $\M$-almost analytic in the variables $(x,y)\in U\times\Gamma_r$
and of slow growth in the variable $y\in\Gamma_r$.
Then the distributional limit $u$ of the sequence 
$u_\eps =F(\,.\,,\,.\,,\eps)\in\E(\Omega\times U)$
exists. We say that $u=b_\Gamma (F)\in\D^\prime(\Omega\times U)$ is the boundary value of $F$.
Furthermore, we have
\begin{equation*}
\WF_\M u\subseteq\,\bigr(\Omega\times U\bigr)\times\bigl(\R^n\times\Gamma^\circ\bigr)
\end{equation*}
where $\Gamma^\circ=\{\eta\in\R^d\mid \langle y,\eta\rangle\geq 0 \;\;\forall y\in\Gamma\}$
is the dual cone of $\Gamma$ in $\R^d$.
\end{Thm}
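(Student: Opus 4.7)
The strategy is to imitate H\"ormander's proof of the analytic boundary value theorem \cite[Theorem 8.4.11]{MR1996773}, with the $h_\M$-decay of $\bar\partial F$ afforded by Dyn'kin's extensions (Corollary \ref{CharMalmostanalytic}) replacing the $e^{-c/|y|}$-decay of the real-analytic case. The argument splits into the existence of the boundary value and the wavefront inclusion.

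For \textbf{existence}, fix $y_0\in\Gamma$ of small norm and set $u_\eps=F(\,\cdot\,,\,\cdot\,,\eps y_0)$. Slow growth gives $|\langle u_\eps,\varphi\rangle|\lesssim \eps^{-k}$ for $\varphi\in\D(\Omega\times U)$. The Cauchy--Riemann type identity $\partial_{y_j}F=i\partial_{x_j}F-2i\bar\partial_j F$ expresses $\partial_y F$ as a pure $x$-derivative modulo the rapidly decreasing $\bar\partial F$; iterated integration by parts in $x$ then converts factors of $\eps^{-1}$ into derivatives landing on $\varphi$, at the cost of an $h_\M$-controlled error. After $k$ iterations the integrand is uniformly bounded near $\eps=0$, so $\eps\mapsto\langle u_\eps,\varphi\rangle$ is Cauchy; a homotopy argument within $\Gamma$ shows the limit is independent of $y_0$. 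This step is the ultradifferentiable analogue of \cite[Theorem 3.1.15]{MR1996773}.

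For the \textbf{wavefront bound}, fix $(x_0',x_0;\xi_0',\xi_0)\in T^*(\Omega\times U)\setminus 0$ with $\xi_0\notin\Gamma^\circ$. Choose $y_0\in\Gamma$ with $\langle y_0,\xi_0\rangle<0$ and a conic neighborhood $V$ of $(\xi_0',\xi_0)$ on which $\langle y_0,\xi\rangle\leq -c|(\xi',\xi)|$. I would build cutoffs $(\chi_N)_N\subset\D(\Omega\times U)$ equal to $1$ near $(x_0',x_0)$, supported in a fixed compact, and satisfying the estimates of Proposition \ref{WF-M Charakterisierung}, together with almost-analytic extensions $\tilde\chi_N$ in the $x$-variables carrying the matching $\bar\partial$-bounds. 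Setting $G(x',z)=F(x',\real z,\imag z)$, Stokes' theorem on $\R^d + i[\eps,t]y_0$ followed by the limit $\eps\to 0^+$ gives
\begin{equation*}
\widehat{\chi_N u}(\xi',\xi) = \int \tilde\chi_N(x',x+ity_0)\, G(x',x+ity_0)\, e^{-i(x+ity_0)\cdot\xi - ix'\cdot\xi'}\, dx\, dx' + R_N(t;\xi',\xi),
\end{equation*}
where $R_N$ is the $\bar\partial$-correction over the cylinder. The exponential contributes $e^{t\langle y_0,\xi\rangle}\leq e^{-ct|(\xi',\xi)|}$; the correction $R_N$ splits into a piece from $\bar\partial_z\tilde\chi_N$ (controlled by the cutoff estimates of Proposition \ref{WF-M Charakterisierung}) and a piece from $\bar\partial_z G$, bounded by $Ch_\M(Qs|y_0|)$. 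Choosing $t|(\xi',\xi)|\sim N$, using $h_\M(Qs|y_0|)\leq (Qs|y_0|)^N m_N$, and invoking Stirling's formula to combine $N^N e^{-N}m_N$ into $M_N$, one verifies that each term is bounded by $Q^N M_N|(\xi',\xi)|^{-N}$ on $V$, which by Definition \ref{WF-M Def1} gives $(x_0',x_0;\xi_0',\xi_0)\notin\WF_\M u$.

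The \textbf{main obstacle} is the simultaneous balancing of three competing scales --- the exponential damping $e^{-ct|\xi|}$, the slow growth $|y|^{-k}$ of $G$, and the $h_\M$-decay of $\bar\partial G$ --- through the single parameter $t=t(N,|\xi|)$. The condition \eqref{derivclosed} is critical, since it permits freely interchanging $(M_N)_N$ and $(m_N)_N$ in the final estimate. A secondary subtlety arises in the quasianalytic case where $\D_\M=\{0\}$: the cutoffs $\chi_N$ cannot be taken ultradifferentiable, and one must design both the sequence and its almost-analytic extension to respect the order-$N$-limited Denjoy--Carleman estimates of Proposition \ref{WF-M Charakterisierung} --- in particular, the $\bar\partial\tilde\chi_N$ piece of $R_N$ must be calibrated so that the cutoff's imperfect ultradifferentiability is compensated by the exponential factor on the shifted contour.
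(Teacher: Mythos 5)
Your overall architecture coincides with the paper's. The existence argument via $\partial_{y_j}F=i\partial_{x_j}F-2i\bar{\partial}_jF$ and iterated integration by parts in $x$ is just another way of writing the paper's Stokes computation against the truncated Taylor polynomial $\Phi_\kappa(x^\prime,x,y)=\sum_{\lvert\alpha\rvert\leq\kappa}\partial_x^\alpha\varphi(x^\prime,x)(iy)^\alpha/\alpha!$ of the test function; and for the wavefront bound the paper likewise uses $N$-dependent H\"ormander cutoffs (those of \cite[Theorem 1.4.2]{MR1996773}, i.e.\ exactly the hypotheses of Proposition \ref{WF-M Charakterisierung}) extended into the complex domain by their order-$\kappa$ Taylor polynomial, whose $\bar{\partial}$ is the order-$\kappa$ Taylor remainder --- this is precisely your ``$\tilde{\chi}_N$ with matching $\bar{\partial}$-bounds'', and it is how the quasianalytic obstruction you correctly identify is resolved.

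However, the balancing you describe does not work as stated. With the contour pushed to height $t$, the shifted boundary term is bounded by $t^{-k}e^{-ct\lvert(\xi^\prime,\xi)\rvert}$ (slow growth times the exponential gain). Under your choice $t\lvert(\xi^\prime,\xi)\rvert\sim N$ this is of size $(c\lvert(\xi^\prime,\xi)\rvert/N)^{k}e^{-N}$, which for each fixed $N$ grows polynomially in $\lvert(\xi^\prime,\xi)\rvert$ and is therefore not $O(Q^NM_N\lvert(\xi^\prime,\xi)\rvert^{-N})$; Definition \ref{WF-M Def1} requires that bound for every fixed $N$ uniformly on the cone. A $\xi$-dependent contour height is neither needed nor helpful. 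The paper pushes the contour all the way to a fixed $Y_0\in\Gamma$: the main term is then $O(e^{-c\lvert\zeta\rvert})=O(\kappa!(c\lvert\zeta\rvert)^{-\kappa})$ by the elementary inequality $e^{-s}\leq \kappa!\,s^{-\kappa}$; the $\bar{\partial}F$ correction is handled by the Gamma integral $\int_0^1 h_\M(Q\tau\lvert Y_0\rvert)e^{\tau\langle Y_0,\eta\rangle}\,d\tau\leq m_j(Q\lvert Y_0\rvert)^j j!(c\lvert\zeta\rvert)^{-j-1}$ with $j=\kappa-k$; and the cutoff's Taylor remainder contributes a term of the same size. All three are then absorbed into $Q^\kappa m_\kappa\kappa!\lvert\zeta\rvert^{-\kappa}=Q^\kappa M_\kappa\lvert\zeta\rvert^{-\kappa}$, using $m_j\geq 1$ and \eqref{derivclosed} to shift indices. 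So the single device doing the work is $e^{-s}\leq N!\,s^{-N}$ at a fixed contour height, not an optimization over $t$; with that correction your argument becomes the paper's.
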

\begin{proof}
Let $\varphi\in\D(\Omega\times U)$ and $Y_0\in\Gamma_\delta$. 
Then there are $K\subset\subset\Omega$, $L\subset\subset U$ such that 
$\supp \varphi \subseteq K\times L$ and constants $c,k>0$ exists such that \eqref{temperategrowth} holds.
We set 
\begin{equation*}
\Phi_\kappa (x^\prime, x,y)=\sum_{\lvert\alpha\rvert\leq \kappa}\partial^\alpha_x\varphi (x^\prime,x)\frac{(iy)^\alpha}{\alpha !}
\end{equation*}
for $\kappa\geq k$. Obviously $F\cdot\Phi_\kappa$ can be extended to a smooth function on 
$\R^n\times\R^d\times\Gamma_\delta$ that vanishes outside $K\times L\times\Gamma_\delta$. 
We consider the function
\begin{equation*}
u_\eps:\, \R^2\ni (\sigma,\tau)\longmapsto F(x^\prime,\tilde{x}+\sigma Y_0,\eps+\tau Y_0)
\Phi_\kappa(x^\prime,\sigma Y_0,\tau Y_0)
\end{equation*}
where $ x^\prime\in\R^n,\, \tilde{x}\in Y_0^\bot=\{z\in\R^d\mid \langle z,Y_0\rangle=0\}$.
If $a<b$ are chosen such that $\varphi (x^\prime,\tilde{x}+ \sigma Y_0)=0$ 
for all $x^\prime\in\R^n$, $\tilde{x}\in Y_0^\bot$
and $\sigma\leq a$ or $\sigma\geq b$
then $u_\eps (\sigma ,\tau)=0$ for all $\tau\in [0,1]$.
If $R=[a,b]\times [0,1]$ then Stokes' Theorem states that
\begin{equation}\label{Stokes1}
\int_{\partial R} \!u_\eps\; d\zeta=\int_R\! \frac{\partial u_\eps}{\partial \bar{\zeta}} \;d\bar{\zeta}\wedge d\zeta
\end{equation}
where we have set $\zeta =\sigma +i\tau$. 

A simple computation gives
\begin{equation*}
2i\frac{\partial}{\partial \bar{\zeta}}\bigl(\Phi_\kappa(x^\prime,\tilde{x}
+\sigma Y_0,\tau Y_0)\bigl)
=(\kappa +1)\tau^\kappa\sum_{\lvert\alpha\rvert=\kappa +1}\!\!\! \partial_x^\alpha\varphi(x^\prime,\tilde{x}+\sigma Y_0)
\frac{(iY_0)^\alpha}{\alpha !}.
\end{equation*} 
Hence  formula \eqref{Stokes1} means in detail that
\begin{equation*}
\begin{split}
\int_a^b\! F(x^\prime,\sigma Y_0,\eps)\varphi(x^\prime,\sigma Y_0)\,d\sigma 
&=\int_a^b\! F(x^\prime,\sigma Y_0,\eps + Y_0)\Phi_\kappa (x^\prime,\sigma Y_0, Y_0)\,d\sigma\\
&+2i\int_a^b\!\!\int_0^1\!\langle \bar{\partial}F(x^\prime,\sigma Y_0,\eps+ \tau Y_0),Y_0\rangle\Phi_\kappa (x^\prime,\sigma Y_0, \tau Y_0)\,d\tau d\sigma\\
&+(\kappa +1)\int_a^b\!\!\int_0^1\!F(x^\prime,\sigma Y_0,\eps+\tau Y_0)\tau^\kappa\sum_{\lvert\alpha\rvert=\kappa +1}\frac{\partial_x^\alpha \varphi}{\beta !}\,d\tau d\sigma
\end{split}
\end{equation*}
and thus integrating over $\Omega\times Y_0^\bot$ yields
\begin{equation}\label{ConvergenceBVeq}
\begin{split}
\int_{\Omega\times U}\negthickspace\!\! F(x^\prime,x,\eps)\varphi(x^\prime,x)\,d\lambda(x^\prime,x)
&=\int_{\Omega\times U}\!\! F(x^\prime,x,\eps+Y_0)\Phi_\kappa (x^\prime,x,Y_0)\,d\lambda(x^\prime,x)\\
&+2i\! \int_{\Omega\times U}\!\int_0^1 \!\bigl\langle \bar{\partial} F(x^\prime,x,\eps+\tau Y_0),Y_0\bigr
\rangle \Phi_\kappa(x^\prime,x,\tau Y_0)\,d\tau d\lambda(x^\prime,x)\\
&+(\kappa+1)\negmedspace\!\int_{\Omega\times U}\!\int_0^1\! F(x^\prime,x,\eps+\tau Y_0)\tau^\kappa\negthickspace\!\!
\sum_{\lvert\alpha\rvert=\kappa+1}\negthickspace\!\!
\partial_{x}^\alpha \varphi(x^\prime,x)\frac{(iY_0)^\alpha}{\alpha !}d\lambda(x^\prime,x).
\end{split}
\end{equation}
Since by assumption $\lvert\tau^\kappa F(x^\prime,x,\eps+\tau Y_0)\rvert\leq c$ for some constant $c$ and 
$\bar{\partial}_j F$ decreases rapidly for $\Gamma_r\ni y\rightarrow 0$ 
(c.f.\ the remarks after Lemma \ref{lem-weight-con})
the bounded convergence theorem implies that the right-hand side converges for $\eps\rightarrow 0$.
Hence we define
\begin{equation}\label{BVDef}
\begin{split}
\langle u,\varphi\rangle 
&:=\int_{\Omega\times U}\!\! F(x^\prime,x,Y_0)\Phi_\kappa (x^\prime,x,Y_0)\,d\lambda(x^\prime,x)\\
&+2i\! \int_{\Omega\times U}\!\int_0^1 \!\bigl\langle \bar{\partial} F(x^\prime,x,\tau Y_0),Y_0\bigr
\rangle \Phi_\kappa(x^\prime,x,\tau Y_0)\,d\tau d\lambda(x^\prime,x)\\
&+(\kappa+1)\!\int_{\Omega\times U}\!\int_0^1 F(x^\prime,x,\tau Y_0)\tau^\kappa\!\!\!
\sum_{\lvert\alpha\rvert=\kappa+1}\!\!\partial_{x}^\alpha \varphi(x^\prime,x)
\frac{(iY_0)^\alpha}{\alpha !}\,d\tau d\lambda(x^\prime, x).
\end{split}
\end{equation}
Since there is a constant $\tilde{C}$ only depending on $F$ and $K\times L$ such that
\begin{equation*}
\lvert\langle u,\varphi \rangle\rvert\leq \tilde{C}\sup_{(x^\prime ,x)\in K\times L}
\Biggl(\sum_{\lvert\beta\rvert\leq \kappa +1}\bigl\lvert\partial^\beta_x \varphi(x^\prime,x)\bigr\rvert\Biggr)
\end{equation*}
we deduce that the linear form $u$ on $\D(\Omega\times U)$ given by \eqref{BVDef} is a distribution.

Now, let $p_0\in\Omega\times U$ and 
$\omega_2\times V_2\subset\subset\omega_1\times V_1\subset\subset\Omega\times U$ 
two open neighbourhoods of $p_0$. Using \cite[Theorem 1.4.2]{MR1996773} we can choose a sequence
$(\varphi_\kappa)_\kappa\subset\D(\omega_1\times V_1)$ such that 
$\varphi_\kappa\vert_{\omega_2\times V_2}\equiv 1$ and for all $\gamma\in\N_0^{n+d}$ we have that
\begin{equation}\label{Testfunctionest1}
\bigl\lvert D^{\gamma+\beta}\varphi_\kappa\bigr\rvert\leq 
\bigr(C_\gamma(\kappa+1)\bigr)^{\lvert\beta\rvert}\qquad
\lvert\beta\rvert\leq \kappa +1
\end{equation}
for a constant $C_\gamma\geq 1$ independent of $\kappa$.
As before we set for each $\kappa$
\begin{equation*}
\Phi_\kappa (x^\prime,x,y)=\sum_{\lvert\alpha\rvert\leq \kappa}\partial^\alpha_x
\varphi_\kappa (x^\prime,x)\frac{(iy)^\alpha}{\alpha !}.
\end{equation*}
We aim to estimate $\widehat{\varphi_\kappa u}$. 
In order to do so let $(\xi,\eta)\in\R^n\times\R^d$ and notice that \eqref{BVDef} implies 
for $\kappa \geq k$
\begin{equation*}
\begin{split}
\widehat{\varphi_\kappa u}(\xi,\eta)
&=\Bigl\langle u,e^{-i\langle \,.\,,(\xi,\eta)\rangle}\varphi_\kappa\Bigr\rangle\\ 
&=\int_{\Omega\times U}\!\!\! F(x^\prime,x,Y_0)e^{-i(x^\prime\xi +(x+iY_0)\eta)}
\Phi_\kappa (x^\prime,x,Y_0)\,d\lambda(x^\prime,x)\\
&+2i\! \int_{\Omega\times U}\!\int_0^1 \!\bigl\langle \bar{\partial} F(x^\prime,x,\tau Y_0),Y_0\bigr
\rangle e^{-i(x^\prime\xi+(x+i\tau Y_0)\eta)}\Phi_\kappa (x^\prime,x,\tau Y_0)\,d\tau 
d\lambda(x^\prime,x)\\
&+(\kappa+1)\!\int_{\Omega\times U}\!\int_0^1 F(x^\prime,x,\tau Y_0)
e^{-i(x^\prime\xi+(x+i\tau Y_0)\eta )}\tau^\kappa\!\!\!\sum_{\lvert\alpha\rvert=\kappa+1}\!\!
\partial_{x}^\alpha \varphi(x^\prime,x)\frac{(iY_0)^\alpha}{\alpha !}\,d\tau d\lambda (x^\prime,x)
\end{split}
\end{equation*}
for some fixed, but arbitrary $Y_0\in\Gamma_r$ 
(note that $k$ depends on $u$, $\omega_1\times V_1$ and $Y_0$).
Condition \eqref{Testfunctionest1} gives the following estimate for $0\leq\mu\leq\kappa +1$
\begin{equation*}
\Biggl\lvert\sum_{\lvert\alpha\rvert=\mu}\partial^\alpha_{x}\varphi_\kappa(x^\prime,x)
\frac{(iY)^\alpha}{\alpha !}\Biggr\rvert\leq C_0^\mu (\kappa+1)^\mu\sum_{\lvert\alpha\rvert =\mu}
\frac{\lvert Y^\alpha\rvert}{\alpha !}=C_0^\mu (\kappa+1)^\mu\frac{\lvert Y\rvert_1^\mu}{\mu !}
\end{equation*}
where $\lvert Y\rvert_1=\sum_j\lvert Y_j\rvert$ for $Y=(Y_1,\dotsc,Y_d)\in\R^d$. 
Hence we have
\begin{gather*}
\bigl\lvert\Phi_\kappa(x^\prime,x,\tau Y_0)\bigr\rvert \leq C_1^{\kappa +1}\\
\biggl\lvert (\kappa +1)\sum_{\lvert\alpha\rvert =\kappa +1}\partial_x^\alpha\varphi_\kappa 
(x^\prime,x)\frac{(i Y_0)^\alpha}{\alpha !}\biggr\rvert \leq C_1^{\kappa +1}
\end{gather*}
for $C_1=2e^{C_0\lvert Y_0\rvert_1}$ and $\tau\in [0,1]$. We obtain
\begin{equation*}
\begin{split}
\lvert\widehat{\varphi_\kappa u}(\xi,\eta)\rvert &\leq C_1^{\kappa +1}e^{\eta Y_0}
+2C_1^{\kappa +1}C\int_0^1\! h_\M (Q\tau \lvert Y_0\rvert)e^{\tau\eta Y_0}\,d\tau
+C_1^{\kappa +1}\int_0^1\!\tau^{\kappa -k} e^{\tau\eta Y_0}\,d\tau\\
&\leq C_2 Q_1^\kappa\biggl( e^{\eta Y_0}+m_{\kappa -k}\int_0^1\tau^{\kappa -k} e^{\eta Y_0}\biggr)
=C_2Q_1^\kappa\Bigl( e^{\eta Y_0}+m_\kappa(\kappa -k)!(-Y_0\eta)^{k-\kappa -1}\Bigr)
\end{split}
\end{equation*}
for some constants $C_2,Q_1$ and $Y_0\eta<0$. 
 If we set $\tilde{Y}_0=(0,Y_0)\in \R^n\times\R^d$ then obviously
 \begin{equation*}
 \bigl\langle \tilde{Y}_0,(\xi,\eta)\bigr\rangle=\langle Y_0, \eta\rangle.
 \end{equation*} 
 Therefore we have for $\kappa\geq k$ and $\zeta =(\xi,\eta)$ that
 \begin{equation*}
 \bigl\lvert\widehat{\varphi_\kappa u}(\zeta)\bigr\rvert=C_3 Q_1^\kappa\biggl(e^{\tilde{Y}_0\zeta}
 +m_{\kappa -k}(\kappa -k)! \bigl(-\tilde{Y}_0\zeta\bigr)^{k-\kappa -1}\biggr)
 \end{equation*}
and $\tilde{Y}_0\zeta<0$.
 
 Now for any $\zeta_0\in\R^{n+d}$ with $\langle \tilde{Y}_0,\zeta_0\rangle<0$
  we can choose an open cone $V\subseteq\R^{n+d}$ such that $\zeta_0\in V$ and 
  for some constant $c >0$
  we have $\langle \tilde{Y}_0,\zeta\rangle <-c\lvert\zeta\rvert$ if $\zeta\in V$.
  Furthermore we set $u_\kappa =\varphi_{k+\kappa -1}u$.
 Clearly the sequence $(u_\kappa)_\kappa$ is bounded in $\E^{\prime}(\Omega\times U)$ and
 $u_\kappa\vert_{\omega_2\times V_2}\equiv u\vert_{\omega_2\times V_2}$.
Also using the inequality $e^{-c\lvert\zeta\rvert}\leq \kappa !(c\lvert\zeta\rvert)^{-\kappa}$ we conclude 
\begin{equation*}
 \bigl\lvert\hat{u}_\kappa(\zeta)\bigr\rvert=C_3 Q_1^\kappa\biggl(\kappa ! 
 (c\lvert\zeta\rvert)^{-\kappa}
 +m_{\kappa -1}(\kappa -1)! \bigl(c\lvert\zeta\rvert\bigr)^{-\kappa}\biggr)
 \leq C_3 Q_2^\kappa m_\kappa \kappa !\lvert\zeta\rvert^{-\kappa} \qquad \zeta \in V.
 \end{equation*}
 Hence $(p_0,\zeta_0)\notin \WF_\M u$ and therefore
 \begin{equation*}
 \WF_\M u\subseteq \bigl(\Omega\times U\bigr)\times \bigl(\R^n\times\Gamma^\circ\bigr)\!
 \setminus\!\{(0,0)\}.
 \end{equation*}
 \end{proof}
 It is clear that the proof would only require $F\in\CC^1$. From now the constants used in the proofs  will be generic, i.e.\ they may change from line to line.
 \begin{Rem}
If $F\in\E (\Omega\times U\times V)$ is $\M$-almost analytic with respect to the variables $(x,y)\in U\times V$ 
we will often write $F(x^\prime, x+iy)$ or $F(x^\prime, z,\bar{z})$ and consider $F$ as a smooth function
on $\Omega\times(U+iV)$.
If $\Omega=\emptyset$ then we just say that $F$ is $\M$-almost analytic.

Even though in the remainder of this paper we shall 
 only use the assertion of Theorem \ref{WF-M-Micro-decomp} 
 in the special case $\Omega=\emptyset$ (i.e. without 
 parameters), we have decided to include the general 
 statement because we think it is of independent interest. We 
 also have an application for the parameter version 
 of the theorem in our upcoming paper \cite{Furdos2}. 

 \end{Rem}
 \begin{Ex}\label{ExJump}
Consider the holomorphic function $F(z)=\tfrac{1}{z}$ on $\C\!\setminus\!\{0\}$. 
It is well known that the boundary values of $F$ onto the real line from above and beneath, commonly denoted by
\begin{align*}
\frac{1}{x+i0}&=b_+F=\lim_{y\rightarrow 0+}\frac{1}{x+iy}\\
\frac{1}{x-i0}&=b_+F=\lim_{y\rightarrow 0+}\frac{1}{x-iy}\\
\intertext{satisfy the jump relations (c.f.\ e.g.\ Duistermaat-Kolk \cite{MR2680692}), in particular}
2i\delta &=\frac{1}{x-i0}-\frac{1}{x+i0}.
\end{align*}
We have that both $\tfrac{1}{x+i0}$ and $\tfrac{1}{x-i0}$ are real-analytic outside the origin.
Hence the application of Theorem \ref{Theorem-M-BVWF} together with the jump relations imply that
\begin{equation*}
\WF_\M \biggl(\frac{1}{x\pm i0}\biggr)=\{0\}\times\R_\pm.
\end{equation*}
 \end{Ex}

 There is a partial converse to the last theorem.
\begin{Thm}\label{BV-M-WF}
Let $\Gamma\subseteq \R^n$ be an open convex cone and $u\in\D^\prime(\Omega)$
with $\WF_\M u\subseteq\Omega\times\Gamma^\circ$. 
If $V\subset\subset\Omega$ and $\Gamma^\prime$ is an open convex cone 
with $\overline{\Gamma}^\prime\subseteq\Gamma\cup\{0\}$ then there is an $\M$-almost analytic function $F$ on 
$V+i\Gamma^\prime_r$ of slow growth for some $r>0$ such that $u\vert_V=b_{\Gamma^\prime}(F)$
\end{Thm}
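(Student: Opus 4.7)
The plan is to construct $F$ as an inverse Fourier integral with a microlocal conic cut-off, adapting H\"ormander's argument from the analytic case \cite[Thm.~8.4.11, 8.4.15]{MR1996773} so that the extension becomes $\M$-almost analytic in place of strictly holomorphic. Fix intermediate sets $V\subset\subset V_1\subset\subset\Omega$. Since $\overline{\Gamma'}\setminus\{0\}\subset\Gamma$ and both cones are open and convex, a standard compactness argument shows that $\Gamma^\circ\setminus\{0\}$ is contained in the interior of the dual cone $(\overline{\Gamma'})^\circ$. Hence one finds a closed cone $C\subset\R^n$ and a constant $c>0$ with $\Gamma^\circ\setminus\{0\}\subset\mathrm{int}\,C$, $C\setminus\{0\}\subset\mathrm{int}\,(\overline{\Gamma'})^\circ$, and $y\cdot\xi\ge c|y||\xi|$ for every $y\in\overline{\Gamma'}$ and $\xi\in C$. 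Choose a microlocal cut-off $\phi\in\E(\R^n)$ that vanishes on $\{|\xi|\le 1\}$, is positively homogeneous of degree zero for $|\xi|\ge 2$, equals $1$ on $\Gamma^\circ\cap\{|\xi|\ge 2\}$, and is supported in $C$.

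Using \cite[Thm.~1.4.2]{MR1996773} select a sequence $(\chi_N)_N\subset\D(V_1)$ with $\chi_N\equiv 1$ on $V$ satisfying the $\M$-derivative estimates of Proposition~\ref{WF-M Charakterisierung}; then $(\chi_Nu)_N$ is bounded in $\E^{\prime S}(V_1)$ for some order $S$, so $|\widehat{\chi_Nu}(\xi)|\le C_0(1+|\xi|)^S$ globally, while Proposition~\ref{WF-M Charakterisierung} applied to a closed cone containing $\supp(1-\phi)$ outside a bounded set yields
\begin{equation*}
\bigl|\widehat{\chi_Nu}(\xi)\bigr|\le C_1Q^NM_N(1+|\xi|)^{-N},\qquad\xi\in\supp(1-\phi).
\end{equation*}
Decompose accordingly: the \emph{bad-frequency} pieces $F^\flat_N(z):=(2\pi)^{-n}\int_{\R^n}e^{iz\cdot\xi}\phi(\xi)\widehat{\chi_Nu}(\xi)\,d\xi$ are holomorphic on $\R^n+i\Gamma'$ (absolute convergence from $|e^{-y\cdot\xi}|\le e^{-c|y||\xi|}$ on $\supp\phi$), whereas the \emph{good-frequency} remainders $R_N(x):=(2\pi)^{-n}\int_{\R^n}e^{ix\cdot\xi}(1-\phi(\xi))\widehat{\chi_Nu}(\xi)\,d\xi$ obey $|D^\alpha R_N|\le CQ^NM_N$ for $|\alpha|\le N-n-1$, so Corollary~\ref{CharMalmostanalytic} furnishes $\M$-almost analytic extensions $\tilde R_N$ to $V+iB(0,\rho)$. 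Glue with a smooth partition of unity $(\rho_N)_N$ of $(0,r]$ with $\supp\rho_N\subset\{|y|\sim 1/N\}$ and $|\rho_N^{(k)}|\le C_k N^k$:
\begin{equation*}
F(x+iy):=\sum_{N\ge N_0}\rho_N(|y|)\bigl(F^\flat_N(x+iy)+\tilde R_N(x+iy)\bigr).
\end{equation*}

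The identity $F^\flat_N(x)+R_N(x)=\chi_Nu(x)=u(x)$ for $x\in V$ combined with Fourier inversion and the limit $N\to\infty$ as $|y|\to 0$ yields $b_{\Gamma'}F=u|_V$, and the slow-growth estimate for $F$ is immediate from the polynomial Fourier bound and the local boundedness of $\tilde R_N$. The main obstacle and technical heart of the proof is the $\M$-almost analytic estimate $|\bar\partial_j F(x+iy)|\le Ch_\M(Q|y|)$. Because each $F^\flat_N$ is holomorphic and each $\tilde R_N$ satisfies the required bound by Corollary~\ref{CharMalmostanalytic}, $\bar\partial F$ reduces to transition terms $\tfrac{iy_j}{2|y|}\sum_N\rho_N'(|y|)(F^\flat_N+\tilde R_N)$; using $\sum_N\rho_N'\equiv 0$ these telescope to differences $F^\flat_{N+1}-F^\flat_N+\tilde R_{N+1}-\tilde R_N$, whose $V$-boundary values sum to $\chi_{N+1}u-\chi_Nu=0$, so the contribution comes entirely from the almost-analyticity defect away from $V$. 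At scale $|y|\sim 1/N$, the bounds $|D^\alpha R_N|\le CQ^NM_N$ combined with Taylor expansion via Corollary~\ref{CharMalmostanalytic} produce a discrete bound of order $(Q|y|)^NM_N/N!$, which in view of $M_N=N!\,m_N$ and $h_\M(Q|y|)=\inf_k(Q|y|)^km_k$ optimizes to exactly $h_\M(Q|y|)$. The construction of $(\chi_N)_N$ in \cite[Thm.~1.4.2]{MR1996773} via iterated convolutions with kernels of scale $\sim 1/N$ is precisely designed so that these discrete decay rates hold, and optimizing $N$ against $|y|$ produces the continuous bound required for $\M$-almost analyticity.
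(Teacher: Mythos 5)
Your construction bypasses the route the paper actually takes: there the statement is a two-line reduction, namely H\"ormander's \cite[Theorem 8.4.15]{MR1996773} (which applies because regular weight sequences satisfy his hypotheses, cf.\ Remark \ref{HoeClassCom}) already writes $u=f+b_{\Gamma'}(U)$ with $f\in\E_\M$ and $U$ holomorphic of slow growth, and the only new ingredient is Corollary \ref{CharMalmostanalytic} applied to $f$. You instead try to reprove the decomposition and the gluing simultaneously, and this is where the argument breaks down. The first concrete problem is the invocation of Corollary \ref{CharMalmostanalytic} for $R_N$: that corollary applies to functions of class $\{\M\}$, whereas $R_N=\mathcal{F}^{-1}[(1-\phi)\widehat{\chi_Nu}]$ is, for fixed $N$, only of class $\CC^{N-n-1}$ (Proposition \ref{WF-M Charakterisierung} gives decay $|\xi|^{-N}$ for that one $N$, not for all orders), so no $\M$-almost analytic extension $\tilde R_N$ exists in the sense of the corollary. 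One would need a finite-order Taylor--Whitney extension together with a defect estimate of the form $(Q|y|)^{N}M_N/N!$, which must be proved, not cited.

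The second and more serious gap is the estimate of the transition terms. At $|y|\sim 1/N$ the sum $\sum_M\rho_M'(|y|)\bigl(F^\flat_M+\tilde R_M\bigr)$ reduces (after using $\sum\rho_M'\equiv0$) to $\rho_N'(|y|)\bigl(F^\flat_{N+1}-F^\flat_N+\tilde R_{N+1}-\tilde R_N\bigr)$ with $|\rho_N'|\sim N\sim|y|^{-1}$, so you need the difference to be $O(|y|\,h_\M(Q|y|))$ on $V+i\Gamma'$. But $F^\flat_{N+1}-F^\flat_N=\langle(\chi_{N+1}-\chi_N)u,K(\cdot+iy-w)\rangle$ with $K=\mathcal{F}^{-1}\phi$; since $(\chi_{N+1}-\chi_N)u$ is supported in $V_1\setminus V$ at positive distance from $x\in V$ and carries only uniform $\E^{\prime S}$ bounds, this pairing is merely $O(1)$, not $o(|y|)$. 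Vanishing of the boundary value on $V$ does not make a holomorphic function small in the wedge over $V$ (it only makes it extend across $V$), so "the contribution comes entirely from the almost-analyticity defect away from $V$" is unjustified; you would need a quantitative cancellation between $F^\flat_{N+1}-F^\flat_N$ and $\tilde R_{N+1}-\tilde R_N$ that the construction does not supply. Finally, slow growth of the glued $F$ is not "immediate": at scale $|y|\sim1/N$ your bound for $\tilde R_N$ is $CQ^NM_N$, which is not polynomially bounded in $1/|y|=N$ for a general regular weight sequence (e.g.\ $M_k=k!\,2^{k^2}$). Each of these points must be repaired before the proof stands; alternatively, quote Theorem 8.4.15 for the holomorphic part and reserve Dyn'kin's extension for the $\E_\M$ remainder, as the paper does.
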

\begin{proof}
By \cite[Theorem 8.4.15]{MR1996773} we have that $u$ can be written on a bounded neighbourhood $U$ of 
$V$ as a sum of a function $f\in\E_\M(U)$ and the boundary value of a holomorphic function of slow growth 
on $U+i\Gamma^\prime_r$ for some $r$.
 To obtain the assertion use Corollary \ref{CharMalmostanalytic} to extend $f$ almost-analytically on $V$. 
\end{proof}
In order to proceed we need a further refinement of a result of H{\"o}rmander.
\begin{Lem}\label{WF-M-Micro-decomp}
Let $\Gamma_j\subseteq\R^n\!\setminus\!\{0\}$, $j=1,\dots, N$, be closed cones such that
$\bigcup_j\Gamma_j=\R^n\!\setminus\!\{0\}$ and $V\subset\subset\Omega$ convex. 
Any $u\in\D^\prime (\Omega)$ can be written on $V$ as a linear combination $u\vert_V=\sum_j u_j$
of distributions $u_j\in\D^{\prime}(V)$ that satisfy
\begin{equation*}
\WF_\M u_j\subseteq \WF_\M u\cap \bigl(V\times\Gamma_j\bigr)
\end{equation*}
\end{Lem}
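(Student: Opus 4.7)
The plan is to carry out a Fourier multiplier microlocal decomposition using a homogeneous smooth partition of unity on $\R^n \setminus \{0\}$ adapted to the cover $\{\Gamma_j\}$, and to verify the wavefront inclusions via the standard convolution-splitting argument combined with the cut-off sequences provided by Proposition \ref{WF-M Charakterisierung}.

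First I choose a smooth partition of unity $\phi_j \in C^\infty(\R^n \setminus \{0\})$, homogeneous of degree zero, with $\sum_j \phi_j \equiv 1$ on $\R^n \setminus \{0\}$ and $\supp \phi_j \subseteq \Gamma_j$; such a partition is obtained by refining a smooth partition of unity on $S^{n-1}$ subordinate to the closed cover $\{\Gamma_j \cap S^{n-1}\}$. Then I pick a cut-off $\chi \in \D(\Omega)$ with $\chi \equiv 1$ on a neighbourhood of $V$ (note $\chi$ is not required to be ultradifferentiable); since $\chi u \in \E^\prime(\Omega)$, its Fourier transform $\widehat{\chi u}$ is a polynomially bounded continuous function. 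Extending $\phi_j$ by $0$ at the origin gives a bounded measurable function, so the product $\phi_j\,\widehat{\chi u}$ is a tempered distribution, and I define
$$u_j := \mathcal{F}^{-1}(\phi_j\,\widehat{\chi u})\big|_V \in \D^\prime(V), \qquad j=1,\dots, N.$$
Since $\sum_j \phi_j(\zeta) = 1$ for $\zeta \neq 0$, i.e.\ almost everywhere, one has $\sum_j u_j = \chi u = u$ on $V$ with no remainder.

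It remains to verify the two wavefront set inclusions. In both cases one picks, by \cite[Theorem 1.4.2]{MR1996773}, a sequence $\chi_N \in \D(V)$ with $\chi_N \equiv 1$ near the point $x_0$ of interest, satisfying the derivative bounds of Proposition \ref{WF-M Charakterisierung}; these bounds translate to $|\hat\chi_N(\eta)| \leq C Q^N M_N |\eta|^{-N}$. The Fourier transform of $\chi_N u_j$ is the convolution
$$\widehat{\chi_N u_j}(\xi) = (2\pi)^{-n}\!\int \hat\chi_N(\xi - \zeta)\,\phi_j(\zeta)\,\widehat{\chi u}(\zeta)\,d\zeta,$$
which we estimate by splitting at $|\zeta| = c|\xi|$. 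For $\WF_\M u_j \subseteq V \times \Gamma_j$: given $\xi_0 \notin \Gamma_j$, pick a closed conic neighbourhood $F$ of $\xi_0$ with $F \cap \supp \phi_j = \emptyset$, so that only $\zeta \in \Gamma_j$ contributes; on $|\zeta| < c|\xi|$ one has $|\xi-\zeta| \geq (1-c)|\xi|$, and on $|\zeta| \geq c|\xi|$ the angular separation of $F$ and $\supp\phi_j$ gives $|\xi - \zeta| \geq c'\max(|\xi|,|\zeta|)$, so in both regions the $\M$-decay of $\hat\chi_N$ absorbs the polynomial growth of $\widehat{\chi u}$. For $\WF_\M u_j \subseteq \WF_\M u$: given $\xi_0 \notin \WF_\M u|_{x_0}$, pick $F$ around $\xi_0$ with $\WF_\M u \cap (\overline V \times F) = \emptyset$ and write $\widehat{\chi u} = \widehat{\chi_N u} + \widehat{\chi(1-\chi_N)u}$ using $\chi \chi_N = \chi_N$; Proposition \ref{WF-M Charakterisierung} applied to $u$ controls $\widehat{\chi_N u}$ with $\M$-decay in $F$, while the second piece is a distribution of the same order as $u$ supported away from $x_0$ whose convolution with $\hat\chi_N$ is handled by a purely global split of the integrand.

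The main obstacle will be the bookkeeping of constants in the convolution split: the $\M$-decay rate $Q^N M_N |\eta|^{-N}$ of $\hat\chi_N$ must be matched to the polynomial growth $(1+|\zeta|)^K$ of $\widehat{\chi u}$ so that after factoring out the required $|\xi|^N$ one still obtains a bound of the form $Q^N M_N$. This forces one to use the cut-off $\chi_{N+K+n}$ instead of $\chi_N$ for the $N$-th level estimate, and then to exploit the conditions \eqref{derivclosed} and \eqref{stlogconvex} on the weight sequence to conclude $M_{N+K+n} \leq B^N M_N$ for some $B > 0$, which suffices to absorb the shift in the index. A secondary technicality is the topological construction of the partition $\{\phi_j\}$ with support exactly inside the closed cones $\Gamma_j$: when these cones share only a lower-dimensional intersection (for example two closed half-spaces) a small preliminary refinement of the cover by closed subcones is needed before the smooth partition can be arranged.
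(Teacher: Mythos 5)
There is a genuine gap, and it sits at the heart of your construction: a homogeneous Fourier multiplier $\phi_j$ that is merely $C^\infty$ is \emph{not} microlocal for $\WF_\M$. Your $u_j$ is the convolution $\check\phi_j\ast(\chi u)$ with $\check\phi_j=\mathcal{F}^{-1}\phi_j$, and $\check\phi_j$ is in general only smooth, not of class $\{\M\}$, away from the origin (already $u=\delta_0$ gives $u_1=\check\phi_1$, which then violates $\WF_\M u_1\subseteq\WF_\M\delta_0\cap(\cdot\times\Gamma_1)$, since the latter would force $\check\phi_1\in\E_\M$ off the origin). Concretely, your verification of $\WF_\M u_j\subseteq\WF_\M u$ breaks for the piece $w=\chi(1-\chi_N)u$ in the region $\zeta\approx\xi$: there $\hat\chi_N(\xi-\zeta)$ provides no decay, $\phi_j(\zeta)$ provides none, and $\hat w(\zeta)$ is only polynomially bounded, so no ``purely global split of the integrand'' closes the estimate. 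The only way to rescue a multiplier argument is to integrate by parts in $\zeta$, which requires symbol estimates $\lvert\partial^\alpha\phi_j(\zeta)\rvert\leq CQ^{\lvert\alpha\rvert}M_{\lvert\alpha\rvert}\lvert\zeta\rvert^{-\lvert\alpha\rvert}$ so that the constants match the target bound $Q^NM_N$; such conic cutoffs are exactly $\M$-ultradifferentiable partitions of unity on the sphere and \emph{do not exist when $\M$ is quasianalytic} — which is precisely the case this paper needs the lemma for. (A secondary, unresolved point: for two closed half-spaces no smooth partition with $\supp\phi_j\subseteq\Gamma_j$ exists at all, and ``refining by closed subcones'' does not repair this, since the interiors of the $\Gamma_j$ need not cover $\R^n\setminus\{0\}$.)

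The paper avoids all of this by citing H\"ormander's Corollary 8.4.13 directly: one sets $v=\varphi u$ with a smooth cutoff $\varphi\equiv1$ on $V$ and invokes the decomposition $v=\sum v_j$ with $\WF_\M v_j\subseteq\WF_\M v\cap(\R^n\times\Gamma_j)$, then restricts to $V$. H\"ormander's proof of that corollary rests on the analytic plane-wave decomposition (his Theorems 8.4.11--8.4.12): $v$ is written as a superposition over $\omega\in S^{n-1}$ of boundary values of functions holomorphic in half-spaces, and $v_j$ is obtained by integrating over a \emph{measurable} partition of the sphere subordinate to the $\Gamma_j$. The microlocal bound then comes from the holomorphy of the kernel, not from smooth frequency cutoffs, which is why it survives in the quasianalytic setting. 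If you want an honest self-contained proof, that is the route to reproduce.
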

\begin{proof}
Set $v=\varphi u$ where $\varphi\in\D(\Omega)$ such that $\varphi\equiv 1$ on $V$. 
\cite[Corollary 8.4.13]{MR1996773} gives the existence of $v_j\in\mathcal{S}^\prime(\R^n)$ such that
\begin{equation*}
\WF_\M v_j\subseteq \WF_\M v\cap \bigl(\R^n\times\Gamma_j\bigr).
\end{equation*}
Set $u_j=(v_j)\vert_U$.
\end{proof}
Combining Theorem \ref{BV-M-WF} with Lemma \ref{WF-M-Micro-decomp} we obtain
\begin{Cor}\label{WF-M-localdescr1}
Let $u\in\D^{\prime}(\Omega)$ and $(x_0,\xi_0)\in\Omega\times\R^n\!\setminus\!\{0\}$. 
Then $(x_0,\xi_0)\notin\WF_\M u$ if and only if there are a neighbourhood $U$ of $x_0$,
open convex cones $\Gamma_1,\dots,\Gamma_N$ with the properties 
$\xi_0\Gamma_j<0$, $j=1,\dots N$ and $\Gamma_j\cap\Gamma_k=\emptyset\text{ for}\;j\neq k$,
and $\M$-almost analytic functions $h_j$ on $U+i\Gamma_{r_j}$, $r_j>0$, of slow growth
such that
\begin{equation*}
u\vert_U=\sum_{j=1}^N b_{\Gamma_j}(h_j)
\end{equation*}
\end{Cor}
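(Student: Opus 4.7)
The ``if'' direction is immediate from Theorem \ref{Theorem-M-BVWF} together with the subadditivity $\WF_\M(v+w)\subseteq\WF_\M v\cup\WF_\M w$, which follows directly from Definition \ref{WF-M Def1} by adding the bounded representing sequences. Each $b_{\Gamma_j}(h_j)$ has $\WF_\M\subseteq U\times\Gamma_j^\circ$, and the hypothesis $\xi_0\Gamma_j<0$ rules out $\xi_0\in\Gamma_j^\circ$: any nonzero $y\in\Gamma_j$ would simultaneously give $\langle y,\xi_0\rangle<0$ and $\langle y,\xi_0\rangle\geq 0$. Hence $(x_0,\xi_0)\notin\bigcup_j\WF_\M b_{\Gamma_j}(h_j)\supseteq\WF_\M u|_U$.

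For the ``only if'' direction, assume $(x_0,\xi_0)\notin\WF_\M u$ and fix a bounded convex neighbourhood $V\subset\subset\Omega$ of $x_0$ and an open conic neighbourhood $W$ of $\xi_0$ in $\R^n\setminus\{0\}$ with $\WF_\M u\cap(V\times W)=\emptyset$. My first step is to construct pairwise disjoint open convex cones $\Gamma_1,\dots,\Gamma_N$ lying inside the open half-space $H=\{y\in\R^n:\langle\xi_0,y\rangle<0\}$, together with a closed conic neighbourhood $F_0\subseteq W$ of $\xi_0$, so that $F_0\cup\bigcup_{j=1}^N\Gamma_j^\circ=\R^n\setminus\{0\}$. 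Concretely one selects directions $\omega_1,\dots,\omega_N\in H\cap S^{n-1}$ so well-spread that every unit vector outside a small spherical cap around $\xi_0/\lvert\xi_0\rvert$ lies in some closed half-space $\{\langle\omega_j,\cdot\rangle\geq 0\}$, and takes each $\Gamma_j$ to be a narrow open convex cone around $\omega_j$, with the $\Gamma_j$ mutually disjoint.

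With this cover in place I apply Lemma \ref{WF-M-Micro-decomp} to the closed cones $F_0,\Gamma_1^\circ,\dots,\Gamma_N^\circ$ to obtain $u|_V=u_0+\sum_{j=1}^N u_j$ with $\WF_\M u_0\subseteq\WF_\M u\cap(V\times F_0)$ and $\WF_\M u_j\subseteq\WF_\M u\cap(V\times\Gamma_j^\circ)$. Because $F_0\subseteq W$ the first intersection is empty, and Theorem \ref{WF-MProperties}(2) forces $u_0\in\E_\M(V)$; Corollary \ref{CharMalmostanalytic} then furnishes a neighbourhood $U\subseteq V$ of $x_0$, a radius $\rho>0$ and an $\M$-almost analytic function $H_0$ on $U+iB(0,\rho)$ with $H_0|_U=u_0|_U$. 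For each $j\geq 1$ I apply Theorem \ref{BV-M-WF} to $u_j$ with an open convex subcone $\Gamma_j^\prime\subset\subset\Gamma_j$: this produces an $\M$-almost analytic function $\tilde h_j$ of slow growth on $U+i(\Gamma_j^\prime)_{r_j}$ with $u_j|_U=b_{\Gamma_j^\prime}(\tilde h_j)$. Setting $h_1:=\tilde h_1+H_0\vert_{U+i(\Gamma_1^\prime)_{r_1}}$, which remains $\M$-almost analytic and of slow growth since $H_0$ is bounded on a neighbourhood of the edge, $h_j:=\tilde h_j$ for $j\geq 2$, and relabelling $\Gamma_j^\prime$ as $\Gamma_j$, yields $u|_U=\sum_{j=1}^N b_{\Gamma_j}(h_j)$ with all the required properties (disjointness and $\xi_0\Gamma_j<0$ are inherited from the construction).

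The main obstacle is the geometric first step, namely producing pairwise disjoint open convex cones in the open half-space $H$ whose closed dual cones, together with a small cone around $\xi_0$, cover $\R^n\setminus\{0\}$. Once this combinatorial cover has been arranged, the remainder is an essentially mechanical assembly of Lemma \ref{WF-M-Micro-decomp}, Theorem \ref{BV-M-WF} and Corollary \ref{CharMalmostanalytic}.
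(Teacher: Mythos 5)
Your proof is correct and follows essentially the same route as the paper: a conic decomposition via Lemma \ref{WF-M-Micro-decomp} adapted to $(x_0,\xi_0)$, Theorem \ref{BV-M-WF} for the pieces with wavefront set in the dual cones, and Corollary \ref{CharMalmostanalytic} for the ultradifferentiable piece near $\xi_0$. You are in fact more explicit than the paper on two points it leaves implicit — the geometric construction of disjoint cones in the half-space $\{\langle\xi_0,y\rangle<0\}$ whose duals cover the complement of a neighbourhood of $\xi_0$, and the absorption of the $\E_\M$ summand into one of the boundary values — but the underlying argument is the same.
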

\begin{proof}
W.l.o.g.\ assume that $\WF_{\M} u\neq\emptyset$. If $(x_0,\xi_0)\notin\WF_\M u$ one can find 
closed cones $V_1,\dots,V_N$ with nonempty interior and $V_j\cap V_k$ has measure zero for $j\neq k$
 such that $\xi_0$ is contained in the interior of 
$V_1$ and $V_1\cap\WF_{\M}u=\emptyset$ whereas $\xi_0\notin V_j$ are acute cones  and 
 $\WF_\M u\cap V_j\neq\emptyset$ for $j=2,\dotsc, N$.
 By Lemma \ref{WF-M-Micro-decomp} we can write $u$ on an open neighbourhood $U$ of $x_0$ 
 as a sum $u=u_1+\sum_{j=2}^N u_j$ with $u_1$ being an ultradifferentiable function defined on $U$
 and $u_j\in\D^{\prime}(U)$ such that $\WF_\M u_j\subseteq \WF_\M u\cap V_j$, $j=2,\dotsc ,N$.
The cones $V_2,\dotsc,V_N$ are the dual cones of open convex cones $\Gamma_2,\dotsc,\Gamma_N$,
 i.e.\ $\Gamma_j^\circ=V_j$. We can choose cones $\Gamma_j^\prime\subset\subset\Gamma_j$ 
 and using Theorem \ref{BV-M-WF} we find $\M$-almost analytic functions $h_j$ on 
 $U +i\Gamma^\prime_{j,r}$ of slow growth such that $u_j=b_{\Gamma^\prime_j}(h_j)$. 
It remains to note that $\xi_0y<0$ for all $y\in\Gamma^{\prime}_j$, $j=2,\dotsc, N$.
\end{proof}

Let $\Omega_1\subseteq\R^m$ and $\Omega_2\subseteq\R^n$ be open. 
If $F:\Omega_1\rightarrow\Omega_2$ is a $\E_\M$-mapping then we denote as in \cite[page 263]{MR1996773}
the set of normals by 
\begin{equation*}
N_F=\bigl\{(F(x),\eta)\in \Omega_2\times\R^n:\; DF(x)\eta=0\bigr\}.
\end{equation*}
where $DF$ denotes the transpose of the Jacobian of $F$.
The following is a generalization of \cite[Theorem 8.5.1]{MR1996773}
\begin{Thm}\label{WF-M-composition}
For any $u\in\D^{\prime}(\Omega_2)$ with $N_F\cap\WF_\M u=\emptyset$ we obtain that the pull-back 
$F^*u\in\D^{\prime}(\Omega_1)$ is well defined and
\begin{equation}
\WF_\M\bigl(F^*u\bigr)\subseteq F^*\bigl(\WF_\M u\bigr).
\end{equation}
\end{Thm}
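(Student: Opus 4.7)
The plan is to reduce the statement, via the boundary value characterization of $\WF_\M$ furnished by Corollary \ref{WF-M-localdescr1}, to a chain rule computation for the composition of an $\M$-almost analytic function with an $\M$-almost analytic extension of $F$, and then to apply Theorem \ref{Theorem-M-BVWF}. First, note that $F^*u$ is already well defined as a distribution by the classical \cite[Theorem~8.2.4]{MR1996773}: indeed, $\WF u\subseteq\WF_\M u$ by Theorem \ref{WF-MProperties}(iii), so $N_F\cap\WF u=\emptyset$ follows from the hypothesis.

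Fix $(x_0,\xi_0)\notin F^*(\WF_\M u)$, set $y_0=F(x_0)$ and $\Sigma_0:=\{\eta\in\R^n\setminus\{0\}:(y_0,\eta)\in\WF_\M u\}$. The hypothesis together with the normals condition means precisely that $\xi_0$ lies outside the closed cone $F'(x_0)^T(\Sigma_0\cup\{0\})$. Using Lemma \ref{WF-M-Micro-decomp} with a sufficiently fine conic partition of $\R^n\setminus\{0\}$, and then Theorem \ref{BV-M-WF} on each nontrivial piece, one writes
\[
u|_V = u_0 + \sum_{j=1}^{N} b_{\Gamma_j}(h_j)
\]
on a small neighborhood $V$ of $y_0$, where $u_0\in\E_\M(V)$, each $\Gamma_j$ is an open convex cone in $\R^n$, and each $h_j$ is $\M$-almost analytic of slow growth on $V+i\Gamma_j'$ with $\Gamma_j'\subset\subset\Gamma_j$. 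The partition is arranged so that for every $j$ the set $F'(x_0)^T\overline{\Gamma_j^\circ}$ avoids the ray $\R_+\xi_0$; this is feasible thanks to the separation noted above.

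Next, by Corollary \ref{CharMalmostanalytic} applied componentwise, extend $F$ to an $\M$-almost analytic map $\tilde F$ on a complex neighborhood of $x_0$ in $\C^m$. For each $j$, pick an open convex cone $G_j\subset\R^m$ with $F'(x_0)G_j\subset\Gamma_j'$ and $\langle\xi_0,y\rangle<0$ for all $y\in G_j$; both conditions can be met simultaneously precisely because $\xi_0\notin F'(x_0)^T\overline{\Gamma_j^\circ}$. Since $\imag\tilde F(x+iy)=F'(x)y+O(|y|^2)$, the map $\tilde F$ sends a sufficiently small wedge $W+iG_j^\rho$ into $V+i\Gamma_j'$, so
\[
H_j(x+iy):=h_j\bigl(\tilde F(x+iy)\bigr)
\]
is smooth there. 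The chain rule gives
\[
\bar\partial_k H_j = \sum_\ell(\partial_\ell h_j)(\tilde F)\,\bar\partial_k\tilde F_\ell + \sum_\ell(\bar\partial_\ell h_j)(\tilde F)\,\overline{\partial_k\tilde F_\ell},
\]
and combining the bound $|\bar\partial\tilde F_\ell|\le Ch_\M(Q|y|)$, the estimate $|\bar\partial h_j(\tilde F(x+iy))|\le Ch_\M(Q'|y|)$ (valid because $|\imag\tilde F|\sim|y|$ inside $G_j$), Cauchy-type polynomial bounds on $\partial_\ell h_j$ and $\partial_k\tilde F_\ell$ obtained in a marginally larger wedge, and absorption of a polynomial factor into $h_\M$ through \eqref{derivclosed}, yields that $H_j$ is $\M$-almost analytic of slow growth on $W+iG_j^\rho$.

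Theorem \ref{Theorem-M-BVWF} then gives $b_{G_j}(H_j)\in\D'(W)$ with $\WF_\M b_{G_j}(H_j)\subseteq W\times G_j^\circ$. A standard approximation argument, based on the fact that for fixed $y_j^0\in G_j$ both $F^*b_{\Gamma_j}(h_j)$ and $b_{G_j}(H_j)$ arise as the distributional limit, as $\eps\to 0^+$, of the smooth functions $h_j(\tilde F(\,\cdot\,+i\eps y_j^0))$ up to a remainder that is $\M$-flat thanks to the almost-analyticity of $\tilde F$, identifies $F^*u = F^*u_0+\sum_j b_{G_j}(H_j)$ on $W$. Since $F^*u_0\in\E_\M$ and $\xi_0\notin G_j^\circ$ for every $j$, we deduce $(x_0,\xi_0)\notin\WF_\M(F^*u)$, proving the claimed inclusion. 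The hardest step will be the chain rule verification: reconciling the polynomial slow-growth bounds on the holomorphic-type derivatives of $h_j$ with the $h_\M$-decay of $\bar\partial h_j$ and $\bar\partial\tilde F$, and ensuring that $\imag\tilde F$ stays at distance comparable to $|y|$ inside $\Gamma_j'$ on the chosen wedge, are the genuine technical points.
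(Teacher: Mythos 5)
Your proposal is correct and follows essentially the same route as the paper: decompose $u$ via Corollary \ref{WF-M-localdescr1} into an $\E_\M$ part plus boundary values of $\M$-almost analytic functions, compose each with Dyn'kin's $\M$-almost analytic extension $\tilde F$ of $F$, and conclude with Theorem \ref{Theorem-M-BVWF}. The only cosmetic difference is that the paper works with a single direction $h$ satisfying $F'(x_0)h\in\Gamma_j$ and identifies $F^*u_j=\lim_{\eps\to 0}h_j(\tilde F(\,\cdot\,+i\eps h))$ by a two-parameter continuity/limit-interchange argument (leaving the chain-rule and slow-growth verifications implicit in the phrase ``by the proof of Theorem \ref{Theorem-M-BVWF}''), whereas you make the composition $H_j=h_j\circ\tilde F$ on a full cone $G_j$ and its $\M$-almost analyticity explicit.
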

\begin{proof}
The first part of the statement is \cite[Theorem 8.2.4]{MR1996773}.
For the proof of the second part of the theorem assume first that there is an open convex cone $\Gamma$ 
such that $u$ is the boundary value of an $\M$-almost analytic function $\Phi$ on $
\Omega_2+i\Gamma_r$ of slow growth. Hence $\WF_\M u\subseteq\Omega_2\times\Gamma^\circ$.
 If $x_0\in\Omega_1$ and $DF(x_0)\eta\neq 0$ for $\eta\in\Gamma^\circ\!\setminus\!\{0\}$ then
 $DF(x_0)\Gamma^\circ$ is a closed convex cone. We claim that 
\begin{equation*}
\WF_\M (F^{\ast}u)\vert_{x_0}\subseteq \bigl\{(x_0,DF(x_0)\eta):\; \eta\in\Gamma^\circ\!\setminus\{0\}\!\bigr\}.
\end{equation*}
We adapt as usual the argument of H{\"o}rmander \cite{MR1996773}. We can write (see \cite[page 296]{MR1996773})
\begin{equation*}
DF(x_0)\Gamma^\circ =\bigl\{\xi\in\R^n\,\vert\; \langle h,\xi\rangle\geq 0,\; F^\prime(x_0)h\in\Gamma\bigr\}.
\end{equation*}
If $\tilde{F}$ denotes an $\M$-almost analytic extension of $F$ onto $X_0+i\R^n$, $X_0\in\U(x_0)$ convex and  
relatively compact in $\Omega_1$, which exists due to Theorem \ref{Dynkin1}, 
then Taylor's formula implies that
\begin{equation*}
\imag \tilde{F}(x+i\eps h)\in \Gamma\qquad x\in X_0
\end{equation*}
for $F^{\prime}(x_0)h\in\Gamma$ if $X_0$ and $\eps>0$ are small.

Recalling \eqref{ConvergenceBVeq} we see that the map
\begin{equation*}
\R_{\geq 0}\times\bigl(\Gamma\cup\{0\}\bigr)\ni(\eps, y)\longmapsto \tilde{\Phi}(\eps,y):=\Phi\bigl(\tilde{F}(\,.\,+i\eps h)+iy\bigr)\in\D^{\prime}(X_0)
\end{equation*}
is continuous.
 If $\eps \rightarrow 0$ then $\tilde{\Phi}(\eps,y)\rightarrow \tilde{\Phi}(0,y)=\Phi(\tilde{F}(\,.\,+0i)+iy)$
 in $\D^\prime$ and if now $y\rightarrow 0$ we have by definition $\tilde{\Phi}(0,y)\rightarrow F^*u$.
 On the other hand if first $y\rightarrow 0$ then 
 $\tilde{\Phi}(\eps,y)\rightarrow \tilde{\Phi}(\eps,0)=\Phi(\tilde{F}(\,.\,+i\eps h))$. 
 Hence by continuity
 \begin{equation*}
 F^\ast u=\lim_{\eps\rightarrow 0} \Phi\bigl(\tilde{F}(\,.\,+i\eps h)\bigr)
 \end{equation*}
 in $\D^{\prime}(X_0)$ and by the proof of Theorem \ref{Theorem-M-BVWF}
 \begin{equation*}
 \WF_\M F^\ast u\vert_{x_0}\subseteq \{(x_0,\xi)\,\vert\; \langle h,\xi\rangle\geq 0\}.
 \end{equation*}
 The claim follows.
 
Now suppose that $(F(x_0),\eta_0)\notin\WF_\M u$.
By Corollary \ref{WF-M-localdescr1}
 we can write a general distribution 
 $u$ on some neighbourhood $U_0$ of $F(x_0)$ as  $\sum_{j=1}^N u_j$ 
 where the distributions $u_j$, $j=1,\dots,N$, are the boundary values of some $\M$-almost analytic functions
 $\Phi_j$ on $U_0+i\Gamma_j$, where the $\Gamma_j$ are some open convex cones such that
  $\eta_0\Gamma_j<0$ for all $j=1,\dotsc,N$.
By  assumption $DF(x)\eta\neq 0$ when $(F(x),\eta)\in\WF_\M u$ for $x\in F^{-1}(U_0)$.
Hence we can assume that $DF(x)\eta\neq 0$ for $\eta\in\Gamma_j^\circ$ for all $j=1,\dotsc, N$ 
and $x\in F^{-1}(U_0)$ since in the proof of Corollary \ref{WF-M-localdescr1} the cones 
$\Gamma_j$, $j=1,\dotsc,N$, can be chosen such that $\Gamma^\circ\cap S^{n-1}$
and $\Gamma^\circ_j\cap \WF_\M u\vert_x\neq\emptyset$ for $x\in U_0$. 
By the arguments above we have for a small neighbourhood $V$ of $x_0$ that
\begin{equation*}
F^{\ast}u\vert_V=\sum_{j=1}^N{F^\ast u_j}\vert_V
\end{equation*}
and 
$\WF_\M (F^\ast u_j)\vert_{x_0}\subseteq \{(x_0,DF(x_0)\eta)\mid \eta\in\Gamma_j^\circ\!\setminus\!\{0\}\}$
for all $j$.
However, since $\eta_0\Gamma_j<0$ it follows that $(x_0,DF(x_0)\eta_0)\!\notin\!\WF_\M (F^\ast u_j)$ 
and therefore $(x_0,DF(x_0)\eta_0)\!\notin\!\WF_\M(F^\ast u)$.
\end{proof}
\begin{Rem}
If $F$ is an $\E_\M$-diffeomorphism we obtain from Theorem \ref{WF-M-composition} that
\begin{equation*}
\WF_\M \bigl(F^*u\bigr) =F^*\bigl(\WF_\M u\bigr).
\end{equation*}

Hence if $M$ is an $\E_\M$-manifold and $u\in\D^{\prime}(M)$ we can define $\WF_{\M} u$ invariantly
as a subset of $T^{\ast}M\!\setminus\!\{0\}$.
More precisely,  there is a subset $K_u$ of $T^\ast M$ such that the diagram
\begin{equation*}
\begin{tikzcd}
& K_u\arrow[dl]\arrow[dr]\\
T^\ast \varphi (U\cap V)\supseteq\WF_\M v_1\arrow[rr,"\rho^\ast"]&&\WF_\M v_2\subseteq T^\ast \psi(U\cap V)
\end{tikzcd}
\end{equation*}
commutes for any two charts $\varphi$ and $\psi$ of $M$ on $U\subseteq M$ and $V\subseteq M$, respectively. 
We have set $\rho=\psi\circ\varphi^{-1}$, $v_1=\varphi^\ast u\in\D^{\prime}(\varphi(U\cap V))$ and 
$v_2=\psi^\ast u\in\D^{\prime}(\psi(U\cap V))$.
It follows that $K_u\subseteq T^\ast M\!\setminus\!\{0\}$ has to be closed and fiberwise conic.
We set $\WF_\M u:=K_u$.

Analogously we define the wavefront set of a distribution $u\in\D^{\prime}(M,E)$ with values in an ultradifferentiable vector bundle locally by setting
\begin{equation*}
\WF_\M u\vert_V=\bigcup_{j=1}^\nu u_j
\end{equation*}
where $V\subseteq M$ is an open subset such that there is a local basis $\omega^1,\dotsc,\omega^\nu$ of $\E_\M(V,E)$ and $u_j\in\D^\prime(V)$ are distributions on $V$ such that
\begin{equation*}
u\vert_V=\sum_{j=1}^\nu u_j\omega^j.
\end{equation*}
\end{Rem}

We close this section by observing that Theorem \ref{WF-M-composition} allows us to strengthen a uniqueness result of Boman \cite{MR1382568}:
\begin{Thm}
Let $\M$ be a quasianalytic weight sequence and $S\subseteq\R^n$ an $\E_\M$-submanifold. 
If $u$ is a distribution defined on a neighbourhood of $S$ such that
\begin{gather*}
\WF_\M u\cap N^{\ast}S=\emptyset\\
\intertext{and}
\partial^\alpha u\vert_S=0\qquad \forall \alpha\in\N_0^n,
\end{gather*}
then $u$ vanishes on some neighbourhood of $S$.
\end{Thm}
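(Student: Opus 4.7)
The plan is to reduce the statement to a scalar quasianalytic uniqueness via a parametric restriction argument. First, the claim is local near each point of $S$, so fix $p_0\in S$; by the $\E_\M$-implicit function theorem (Theorem~\ref{CMStability}) and the $\E_\M$-diffeomorphism invariance of $\WF_\M$ (Theorem~\ref{WF-M-composition} together with the remark following it), we may straighten $S$ so that locally $S=\R^k\times\{0\}$ and $p_0=0$; both hypotheses are preserved under the $\E_\M$-coordinate change. Splitting coordinates as $x=(x',x'')\in\R^k\times\R^{n-k}$, we have $N^{\ast}S=S\times(\{0\}\times\R^{n-k})$, and by closedness of $\WF_\M u$ we may shrink to a neighborhood $V$ of $p_0$ such that $\WF_\M u|_V$ avoids the conormal of every slice $\{x''=c\}$ for $|c|<\delta$. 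Since $\WF u\subseteq\WF_\M u$ by Theorem~\ref{WF-MProperties}(3), each restriction $u|_{x''=c}\in\D'(\R^k)$ and each $\partial_{x''}^\gamma u|_{x''=c}$ is a well-defined distribution.

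Fix $\varphi\in\D(\R^k)$ supported near $0$ and put $v_\varphi(c):=\langle u|_{x''=c},\varphi\rangle$. The heart of the proof is to show that $v_\varphi$ is ultradifferentiable of class $\{\M\}$ on a neighborhood of $0\in\R^{n-k}$. Pick a closed cone $F\subseteq\R^n$ with $F\supseteq\{0\}\times(\R^{n-k}\setminus\{0\})$ and $\WF_\M u\cap(K\times F)=\emptyset$ for a compact $K\subset V$. Choose cutoffs $\chi_N\in\D(V)$ satisfying the derivative bounds required by Proposition~\ref{WF-M Charakterisierung} and agreeing with a fixed profile of the form $\varphi(x')\tilde\chi(x'')$, where $\tilde\chi\equiv 1$ near $x''=0$, on a fixed neighborhood of $S\cap V$. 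Writing $\pi(x',x'')=x''$, the pushforwards $v_N:=\pi_{\ast}(\chi_N u)$ form a bounded sequence in $\E'(\R^{n-k})$ that agrees with $v_\varphi$ on a fixed neighborhood of $0$, and Proposition~\ref{WF-M Charakterisierung} gives
\[|\hat{v}_N(\eta)|=|\widehat{\chi_N u}(0,\eta)|\leq CQ^N M_N|\eta|^{-N},\qquad \eta\in\R^{n-k}\setminus\{0\},\]
because $(0,\eta)\in F$. Proposition~\ref{MCharFT} then yields $v_\varphi\in\E_\M$ near $0$.

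The hypothesis and the well-definedness of the restrictions give $\partial_c^\gamma v_\varphi(0)=\langle\partial_{x''}^\gamma u|_S,\varphi\rangle=0$ for every $\gamma\in\N_0^{n-k}$. Since $\M$ is quasianalytic, $v_\varphi\equiv 0$ on a neighborhood of $0$, and since $\varphi$ was arbitrary we obtain $u|_{x''=c}=0$ in $\D'(\R^k)$ locally for all small $|c|$. The slicing identity
\[\langle u,\varphi\otimes\psi\rangle=\int\langle u|_{x''=c},\varphi\rangle\,\psi(c)\,dc,\]
which is valid by the wavefront hypothesis on $u$, then shows that $u$ annihilates every tensor product test function supported near $p_0$; by density $u=0$ on a neighborhood of $p_0$, and varying $p_0$ over $S$ completes the proof.

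The delicate step is the parametric $\E_\M$-regularity of $v_\varphi$. Quasianalyticity of $\M$ forces $\D_\M=\{0\}$, so no compactly supported $\E_\M$-cutoff exists in the tangential variable; verifying that the product cutoffs $\chi_N$ admit the derivative bounds of Proposition~\ref{WF-M Charakterisierung} with constants $C_\alpha,h_\alpha$ depending only on $\alpha$ (and not on $N$) is the point where most care is needed. The key observation is that only the purely normal covectors $(0,\eta)$ enter $\hat{v}_N(\eta)$, and these lie in the cone $F$ on which $\WF_\M u$ is empty by hypothesis, so the microlocal decay is governed by the hypothesis rather than by the tangential regularity of $\chi_N$.
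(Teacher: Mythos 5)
Your reduction to the flat case $S=\R^k\times\{0\}$ via the $\E_\M$-implicit function theorem and Theorem \ref{WF-M-composition} is exactly what the paper does; the paper then simply invokes the proof of Theorem 1 of Boman \cite{MR1382568} for the flattened situation, whereas you try to reproduce that argument. Your overall strategy for the flat case (restrict to slices $\{x''=c\}$, show $c\mapsto\langle u|_{x''=c},\varphi\rangle$ is of class $\E_\M$, conclude by quasianalyticity, then undo the slicing) is indeed the right one, but the central step is not established as written.

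The gap is in the application of Proposition \ref{WF-M Charakterisierung}. You require cutoffs $\chi_N\in\D(V)$ that (a) satisfy $\lvert D^{\alpha+\beta}\chi_N\rvert\leq C_\alpha h_\alpha^{\lvert\beta\rvert}M_N^{\lvert\beta\rvert/N}$ for $\lvert\beta\rvert\leq N$ and (b) coincide with the fixed profile $\varphi(x')\tilde\chi(x'')$, $\varphi\in\D(\R^k)$, on a fixed neighbourhood $W$ of $S\cap V$. These two requirements are incompatible in the quasianalytic case for $\varphi\neq 0$: taking $N=\lvert\beta\rvert$ and $\alpha=0$ in (a) yields $\lvert D^{\beta}(\varphi\otimes\tilde\chi)\rvert\leq C_0h_0^{\lvert\beta\rvert}M_{\lvert\beta\rvert}$ on $W$ for every $\beta$, i.e.\ $\varphi\otimes\tilde\chi\in\E_\M(W)$; since $W$ contains a neighbourhood of $\supp\varphi\times\{0\}$ in $\{x''=0\}$ on which $x'\mapsto\varphi(x')$ is then a compactly supported $\E_\M$ function, quasianalyticity forces $\varphi\equiv 0$. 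So the hypotheses of Proposition \ref{WF-M Charakterisierung} simply cannot be met by your cutoffs, and the estimate $\lvert\widehat{\chi_Nu}(0,\eta)\rvert\leq CQ^NM_N\lvert\eta\rvert^{-N}$ does not follow from it. Your closing ``key observation'' --- that only the conormal covectors $(0,\eta)$ enter --- is the correct heuristic for why the statement should nevertheless hold, but turning it into a proof requires a genuine variant of the proposition: one must take $\chi_N(x)=\varphi(x')\psi_N(x'')$ with $\psi_N$ of Hörmander type only in the normal variables, and re-run the proof of \cite[Lemma 8.4.4]{MR1996773} to get the decay only in a narrow cone $\lvert\xi'\rvert\leq\epsilon\lvert\xi''\rvert$, where the uncontrolled tangential derivatives of $\varphi$ are compensated by the smallness of $\xi'$. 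That refinement is precisely the content of Boman's key lemma and is not a formal consequence of Proposition \ref{WF-M Charakterisierung} as stated; without it your proof does not close. (The remaining steps --- differentiability of $c\mapsto u|_{x''=c}$, the identity $\partial_c^\gamma v_\varphi=\langle(\partial_{x''}^\gamma u)|_{x''=c},\varphi\rangle$, and the slicing formula --- are standard under the wavefront hypothesis and are acceptable with brief justification.)
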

Indeed, locally $S$ is diffeomorphic to
\begin{equation*}
S^\prime=\bigl\{(x^\prime,x^{\prime\prime})\in\R^{m+d}\mid x^{\prime\prime}=0\bigr\}\subseteq\R^n
\end{equation*}
and the assumptions of the theorem translate to the corresponding conditions for the pullback $w=F^\ast u$ 
where $F:\R^n\rightarrow \R^n$ is the local $\E_\M$-diffeomorphism that maps $S^\prime$ to $S$.
Then the proof of Theorem 1 in \cite{MR1382568} gives $w=0$ in a neighbourhood of $S^\prime$.
\section{A generalized version of Bony's Theorem}\label{sec:Bony}
We have seen that for a distribution $u$ the wavefront set $\WF_\M u$ can be described either using the 
Fourier transform or by its $\M$-almost analytic extensions. 
The similar fact is true for the analytic wavefront set using holomorphic extensions. 
The latter was the original approach of Sato \cite{zbMATH03331746}.
However, Bros-Iagolnitzer \cite{MR0399494} used the classical
 FBI-Transform to describe the set of microlocal analytic singularities. 
 It was Bony \cite{MR0650834} who proved
 that all three methods describe actually the same set.
 In the ultradifferentiable setting Chung-Kim \cite{MR1492944}, see also Kim-Chung-Kim \cite{MR1826922}, used the FBI transform to define
 an ultradifferentiable singular spectrum for Fourier hyperfunctions. 
 However, they did not mention how this singular spectrum in the case of
 distributions may be related to $\WF_\M$ as defined by H{\"o}rmander.
 Our next aim is to show an ultradifferentiable version of Bony's theorem.
 We will work in the generalized setting of Berhanu and Hounie \cite{MR2864805}.
We shall note that recently Berhanu and Hailu \cite{MR3701268} showed that the Gevrey classes can be characterized by this generalized FBI transform and
Hoepfner and Medrado \cite{Hoepfner-MedradoPreprint} also proved a characterization of the ultradifferentiable wavefront set for a certain class of 
non-quasianalytic weight sequences.
 
Let $p$ be a real, homogeneous, positive, elliptic polynomial of degree $2k$, $k\in\N$, on $\R^n$, i.e. 
\begin{equation*}
p(x)=\sum_{\alpha =2k}a_{\alpha}x^\alpha\qquad a_\alpha\in\R,
\end{equation*}
and there are constants $c,C>0$ such that 
\begin{equation*}
c\lvert x\rvert^{2k}\leq p(x)\leq C\lvert x\rvert^{2k}\qquad x\in\R^n.
\end{equation*}
Let $c_p^{-1}=\int e^{-p(x)}dx$. As in \cite[section 4]{MR2864805} we consider the generalized FBI transform
with generating function $e^{-p}$ of a distribution of compact support $u\in\E^\prime(\R^n)$, i.e.\
\begin{equation*}
\mathfrak{F}u(t,\xi)=c_p\Bigl\langle u(x),e^{i\xi(t-x)}e^{-\lvert\xi\rvert p(t-x)}\Bigr\rangle.
\end{equation*}
The inversion formula is
\begin{equation}\label{FBI-inverse}
u =\lim_{\eps\rightarrow\infty}\int_{\R^n\times\R^n}\!\!e^{i\xi(x-t)}e^{-\eps\lvert\xi\rvert^2}
\mathfrak{F}u(t,\xi)\lvert\xi\rvert^{\tfrac{n}{2k}}\,dtd\xi
\end{equation}
where of course the distributional limit is meant.
\begin{Thm}\label{M-FBIThm}
Let $u\in\D^\prime(\Omega)$ and $(x_0,\xi_0)\in T^\ast\Omega\!\setminus\!\{0\}$. 
Then $(x_0,\xi_0)\notin\WF_\M u$ if and only if there is a test function $\psi\in\D(\Omega)$ with 
$\psi\vert_U\equiv 1$ for some neighbourhood $U$ of $x_0$ such that 
\begin{equation}\label{M-FBIestimate}
\sup_{(t,\xi)\in V\times\Gamma}e^{\omega_\M(\gamma\lvert\xi\rvert)}
\bigl\lvert\mathfrak{F}(\psi u)(t,\xi) \bigr\rvert<\infty
\end{equation}
for some conic neighbourhood $V\times\Gamma$ of $(x_0,\xi_0)$ and some constant $\gamma>0$.
\end{Thm}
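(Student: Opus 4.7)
The strategy is to establish both implications of the equivalence by passing through the geometric, almost-analytic characterization of $\WF_\M$ developed in section~\ref{sec:BV}. This bridges the Fourier-based definition~\ref{WF-M Def1} with the FBI-based condition~\eqref{M-FBIestimate} via the complex-analytic machinery of Dyn'kin extensions.

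For necessity $(\Rightarrow)$, suppose $(x_0,\xi_0)\notin\WF_\M u$. By Corollary~\ref{WF-M-localdescr1}, on a neighborhood of $x_0$ one may write $u=\sum_{j=1}^N b_{\Gamma_j}(h_j)$ with each $h_j$ slowly growing and $\M$-almost analytic on $U+i\Gamma_{j,r}$ and satisfying $\xi_0\cdot\Gamma_j<0$. Choose $\psi\in\D(\Omega)$ supported in a small neighborhood of $x_0$ with $\psi\equiv 1$ on an even smaller neighborhood. For each summand, plug the almost-analytic extension into the integral defining $\mathfrak{F}(\psi b_{\Gamma_j}h_j)(t,\xi)$ and deform the $x$-contour to $\{x+isy_j:s\in[0,\lambda]\}$ for a fixed $y_j\in\Gamma_j$. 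Stokes' theorem produces a boundary integral at $s=\lambda$ and a volume integral over the slab involving $\bar\partial h_j$. For $(t,\xi)$ in a small conic neighborhood of $(x_0,\xi_0)$ one has $|e^{i\xi(t-x-i\lambda y_j)}|=e^{\lambda\xi y_j}\leq e^{-c\lambda|\xi|}$ and, by the ellipticity of $p$, $|e^{-|\xi|p(t-x-i\lambda y_j)}|\leq e^{-c'|\xi||t-x|^{2k}}$ as long as $\lambda$ is small. In the slab $|\bar\partial h_j(x+isy_j)|\leq Ch_\M(Qs|y_j|)$ by definition of $\M$-almost analyticity. Optimizing $\lambda$ as a function of $|\xi|$ and converting between $h_\M$- and $\omega_\M$-decay via Lemma~\ref{lem-weight-con} yields the bound $e^{-\omega_\M(\gamma|\xi|)}$ required by~\eqref{M-FBIestimate}.

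For sufficiency $(\Leftarrow)$, assume~\eqref{M-FBIestimate} and produce the sequence required by Definition~\ref{WF-M Def1}. Shrinking $\psi$ if necessary (which only perturbs the FBI transform by a term bounded by $e^{-c|\xi|}$ on a neighborhood of $x_0$, absorbable into the exponential bound), we may assume $\supp\psi\subset V$. Take H\"ormander cutoffs $\chi_N\in\D(U)$, $\chi_N\equiv 1$ near $x_0$, satisfying $|D^{\alpha+\beta}\chi_N|\leq C_\alpha h_\alpha^{|\beta|}M_N^{|\beta|/N}$ for $|\beta|\leq N$, so that $u_N:=\chi_N u=\chi_N\psi u$ is a bounded sequence in $\E^\prime(\Omega)$ agreeing with $u$ near $x_0$, and $|\widehat{\chi_N}(\zeta)|\leq CQ^NM_N|\zeta|^{-N}$ by integration by parts. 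Apply the inversion formula~\eqref{FBI-inverse} to $\psi u$ and take the Fourier transform to obtain
\[
\widehat{u_N}(\eta)=c\iint\widehat{\chi_N}(\eta-\xi)\,e^{-i\xi t}\,\mathfrak{F}(\psi u)(t,\xi)\,|\xi|^{n/(2k)}\,dt\,d\xi.
\]
For $\eta$ in a subcone $\Gamma'\subset\subset\Gamma$ around $\xi_0$, split the $\xi$-integration into $\Gamma$ and $\Gamma^c$. On $\xi\in\Gamma$ use~\eqref{M-FBIestimate} together with the elementary bound $e^{-\omega_\M(\gamma|\xi|)}\leq M_N(\gamma|\xi|)^{-N}$ coming from the very definition of $\omega_\M$; on $\xi\notin\Gamma$ exploit the geometric separation $|\eta-\xi|\geq c(|\eta|+|\xi|)$ together with the decay of $\widehat{\chi_N}$ and the polynomial growth of $\widehat{\psi u}$. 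Standard manipulations, using the regular weight-sequence structure to absorb polynomial factors in $|\eta|$, then yield the desired bound $|\widehat{u_N}(\eta)|\leq C'Q'^NM_N|\eta|^{-N}$ uniformly in $N$ and $\eta\in\Gamma'$, which gives $(x_0,\xi_0)\notin\WF_\M u$.

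The principal obstacle is the optimization in $(\Rightarrow)$: $\lambda$ must depend on $|\xi|$ so that the boundary factor $e^{-c\lambda|\xi|}$ and the volume factor $\lambda\cdot h_\M(Q\lambda|y_j|)$ are simultaneously of order $e^{-\omega_\M(\gamma|\xi|)}$, while the shifted contour must stay inside $U+i\Gamma_{j,r}$ uniformly in $(t,\xi)$ in a fixed conic neighborhood of $(x_0,\xi_0)$; this forces a coordinated shrinking of $V$, $\Gamma$ and $r$, and uses the ellipticity of $p$ to keep $\Re p(t-x-i\lambda y_j)$ nonnegative throughout the deformation. A second, more structural difficulty is that in the quasianalytic case $\psi$ cannot be chosen in $\D_\M$, so the $(\Leftarrow)$ direction cannot be condensed to a single ultradifferentiable cutoff: it is exactly the interpolated derivative bounds $M_N^{|\beta|/N}$ of Proposition~\ref{WF-M Charakterisierung} that let the final estimate on $\widehat{u_N}$ close.
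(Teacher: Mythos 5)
Your overall architecture for the necessity direction matches the paper's (reduce via Corollary~\ref{WF-M-localdescr1} to a single boundary value, deform the contour by Stokes, estimate boundary and volume terms), but the endgame is where your argument breaks. You bound the volume term crudely by $\lambda\cdot\sup_{0\le s\le\lambda}h_\M(Qs|y_j|)=\lambda\,h_\M(Q\lambda|y_j|)$ and then try to choose $\lambda=\lambda(|\xi|)$ so that this and the boundary factor $e^{-c\lambda|\xi|}$ are both $O(e^{-\omega_\M(\gamma|\xi|)})$. This forces $\lambda\gtrsim\omega_\M(\gamma|\xi|)/|\xi|$ from the boundary term and $\tilde{\omega}_\M\bigl(1/(Q\lambda)\bigr)\gtrsim\omega_\M(\gamma|\xi|)$ from the volume term; these are simultaneously satisfiable at a critical scale for Gevrey sequences (which is why this is the standard argument in the non-quasianalytic FBI literature), but for general regular weight sequences --- in particular quasianalytic ones close to the analytic class, such as $\mathcal{N}^\sigma$ with unfavorable constants $Q,c$ --- the two constraints are incompatible, and no choice of $\lambda(|\xi|)$ closes the estimate. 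The paper avoids the optimization entirely: $\lambda$ is \emph{fixed}, the boundary term is genuinely exponentially small and is absorbed using $\omega_\M(t)=O(t)$ (a consequence of \eqref{analyticincl}), and the volume term is estimated by keeping the damping factor inside the integral, namely $\int_0^\infty e^{-c_0\sigma|v||\xi|}h_\M(\rho\sigma|v|)\,d\sigma\le\rho^k m_k c_0^{-k-1}|v|^{-1}|\xi|^{-k-1}k!$ for every $k$; the factorial produced by the $\sigma$-integration converts $m_k$ into $M_k$, so taking the infimum over $k$ gives $\tilde{h}_\M(\rho_1/|\xi|)=e^{-\omega_\M(|\xi|/\rho_1)}$ by Lemma~\ref{lem-weight-con}. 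This exact Laplace-type computation, not an optimization of the contour height, is the step that makes the theorem work for all regular weight sequences.

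In the sufficiency direction you take a genuinely different route from the paper (direct estimation of $\widehat{\chi_N u}$ via H\"ormander cutoffs and the inversion formula, versus the paper's reassembly of $\psi u$ as $v_0+\sum_j b_{\Gamma_j}f_j$ with $v_0\in\E_\M$ and $f_j$ holomorphic in wedges, followed by Theorem~\ref{Theorem-M-BVWF}), but there is a gap you do not address: the inversion formula \eqref{FBI-inverse} integrates over \emph{all} $t\in\R^n$, while the hypothesis \eqref{M-FBIestimate} controls $\mathfrak{F}(\psi u)(t,\xi)$ only for $t\in V$. Your splitting is only in the $\xi$-variable ($\Gamma$ versus its complement); the region $a\le|t|\le A$, $|\xi|\ge B$ --- where $t$ may still lie in $\supp\psi$ but outside $V$, so that $\mathfrak{F}(\psi u)(t,\xi)$ admits only polynomial bounds --- contributes a term ($I_4^\eps$ in the paper) that carries no decay from the hypothesis and must be shown separately to define a function real-analytic near $x_0$, by holomorphically continuing $x\mapsto e^{i\xi(x-t)}$ using $|x-t|\ge a/2$ together with the ellipticity of $p$; this is the technical crux of every FBI inversion argument and the convolution estimate for $\widehat{u_N}$ cannot close without it. The remaining pieces of your sketch (the bound $e^{-\omega_\M(\gamma|\xi|)}\le M_N(\gamma|\xi|)^{-N}$ on $\Gamma$, the separation estimate off $\Gamma$) are sound in spirit, though the off-$\Gamma$ part still requires balancing the $M_N^{|\beta|/N}$-type decay of $\widehat{\chi_N}$ against the polynomial growth of $\mathfrak{F}(\psi u)$ in $\xi$, which you leave as ``standard manipulations.''
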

\begin{proof}
First, assume that $(x_0,\xi_0)\notin \WF_\M u$. By Corollary \ref{WF-M-localdescr1} we know that for
some neighbourhood $U$ of $x_0$
\begin{equation*}
u\vert_U=\sum_{j=1}^Nb_{\Gamma^j}(F_j)
\end{equation*}
where $F_j$ are $\M$-almost analytic on $U\times\Gamma^j_{r_j}$ for cones $\Gamma^j$ with
$\xi_0\Gamma^j<0$. Hence it suffices to prove the necessity of \eqref{M-FBIestimate} for $u=b_\Gamma (F)$
being the boundary value of an $\M$-almost analytic function on $U\times\Gamma_d$ 
where $\Gamma$ is a cone with the property that $\xi_0\Gamma <0$.
 W.l.o.g.\ $x_0=0$ and let $r>0$ such that $B_{2r}(0)\subset\subset U$ and
$\psi\in\D(B_{2r}(0))$ such that $\psi\vert_{B_r(0)}\equiv 1$. 
Furthermore we choose $v\in\Gamma_d$ and set
\begin{equation*}
Q(t,\xi,x)=i\xi(t-x)-\lvert\xi\rvert p(t-x).
\end{equation*}
Then 
\begin{equation*}
\mathfrak{F}(\psi u)(t,\xi)=\lim_{\tau\rightarrow 0+}\int_{B_{2r}(0)}\!e^{Q(t,\xi,x+i\tau v)}\psi(x)F(x+i\tau v)\,dx.
\end{equation*}
As in the proof of Theorem 4.2 in \cite{MR2864805} we  put $z=x+iy$, $\psi(z)=\psi(x)$ and
\begin{equation*}
D_\tau :=\bigl\{ x+i\sigma v\in\C^n\mid x\in B_{2r}=B_{2r}(0),\; \tau\leq\sigma\leq\lambda\bigr\}
\end{equation*}
for some $\lambda>0$ to be determined later
 and consider the $n$-form
\begin{equation*}
e^{Q(t,\xi,z)}\psi(z)F(z)\,dz_1\wedge \dots\wedge dz_n.
\end{equation*}
Since $\psi\in\D(B_{2r}(0))$ Stokes' theorem implies
\begin{equation}\label{FBI-neccesity1}
\begin{split}
\int_{B_{2r}}\!e^{Q(t,\xi,x+i\tau v)}\psi(x)F(x+i\tau v)\,dx 
&=\int_{B_{2r}}\!e^{Q(t,\xi,x+i\lambda v)}\psi(x)F(x+i\lambda v)\,dx\\
&+\sum_{j=1}^n\int_{D_\tau}\! e^{Q(t,\xi,z)}\frac{\partial}{\partial \bar{z}_j}\bigl(\psi(z)F(z)\bigr)
\,d\bar{z}_j\wedge dz_1\wedge\dots\wedge dz_n\\
&=\int_{B_{2r}}\!e^{Q(t,\xi,x+i\lambda v)}\psi(x)F(x+i\lambda v)\,dx\\
&+\sum_{j=1}^n\int_{B_{2r}}\!\int_\tau^\lambda \!e^{Q(t,\xi,x+i\sigma v)}
\frac{\partial\psi}{\partial \bar{z}_j}(x+i\sigma v)F(x+i\sigma v)\,d\sigma dx\\
&+\sum_{j=1}^n\int_{B_{2r}}\!\int_\tau^\lambda \!e^{Q(t,\xi,x+i\sigma v)}\psi(x+i\sigma v)
\frac{\partial F}{\partial \bar{z}_j}(x+i\sigma v)\,d\sigma dx.
\end{split}
\end{equation}

We need to estimate the integrals on the right-hand side of \eqref{FBI-neccesity1}.
Since $\xi_0\cdot v<0$ there is an open cone $\Gamma_1$ containing $\xi_0$ such that 
$\xi\cdot v\leq -c_0\lvert\xi\rvert\lvert v\rvert$ for all $\xi\in \Gamma_1$ and some constant $c_0>0$.
We note that for $\xi\in \Gamma_1$ and $t$ in some bounded neighbourhood of the origin we have
\begin{equation*}
\begin{split}
\real Q(t,\xi,x+i\lambda v)&=\lambda (\xi v)-\lvert\xi\rvert\real p(t-x-i\lambda v)\\
&=\lambda (\xi v)-\lvert\xi\rvert\bigl(\real p(t-x) +O(\lambda^2)\lvert v\rvert^2\bigr)\\
&\leq \lambda(\xi v)-c\lvert\xi\rvert\bigl(\lvert t-x\rvert^{2k}+O(\lambda^2)\lvert v\rvert^2\bigr)\\
&\leq -c_0\lambda\lvert v\rvert\lvert\xi\rvert +O\bigl(\lambda^2\bigr)\lvert\xi\rvert.
\end{split}
\end{equation*}
Hence for $\lambda$ small enough
\begin{equation}\label{exponentialdecay1}
\real Q(t,\xi,x+i\lambda v)\leq -\frac{c_0}{2}\lambda \lvert v\rvert\lvert\xi\rvert
\end{equation}
where $\xi\in \Gamma_1$, $x\in B_{2r}$ and $t$ is in a bounded neighbourhood $V$ of $0$.
 We conclude that
\begin{equation*}
\left\lvert \;\int_{B_{2r}}\!e^{Q(t,\xi,x+i\lambda v)}\psi(x)F(x+i\lambda v)\,dx\;\right\rvert\leq 
C_1e^{-\gamma_1\lvert\xi\rvert}
\end{equation*}
for some constants $\gamma_1,C_1>0$ and $(t,\xi)\in V\times\Gamma_1$.
We note that \eqref{analyticincl} implies that $\omega_\M(t)=O(t)$ for $t\rightarrow \infty$, c.f.\ e.g.\ Komatsu \cite{MR0320743} or Bonet-Meise-Melikhov \cite{MR2387040}, thence 
\begin{equation*}
\left\lvert \;\int_{B_{2r}}\!e^{Q(t,\xi,x+i\lambda v)}\psi(x)F(x+i\lambda v)\,dx\;\right\rvert\leq C_1
e^{-\omega_\M(\gamma_1\lvert\xi\rvert)}
\end{equation*}
for $(t,\xi)\in V\times\Gamma_1$.

On the other hand we can also estimate
\begin{equation*}
\begin{split}
\real Q(t,\xi,x+i\sigma v)&\leq \sigma (\xi v)- c\lvert t-x\rvert^{2k}\lvert\xi\rvert
+ O\bigl(\lambda^2\bigr)\lvert\xi\rvert\\
&\leq -c\lvert t -x\rvert^{2k}\lvert\xi\rvert+ O\bigl(\lambda^2\bigr)\lvert\xi\rvert
\end{split}
\end{equation*}
since $\xi v<0$ for all $\xi\in\Gamma_1$. 
If $x\in\supp (\partial \psi /\partial\bar{z}_j)$ then $\lvert x\rvert\geq r$.
Therefore if $\lvert t\rvert\leq r/2$ and $\lambda$ small enough we obtain that there is a 
constant $\gamma_2>0$ such that
\begin{equation*}
\real Q(t,\xi,x+i\sigma v)\leq -\gamma_2\lvert\xi\rvert
\end{equation*}
for all $\xi\in\Gamma_1$. Hence
\begin{equation*}
\left\lvert\; \sum_{j=1}^n\int_{B_{2r}}\!\int_\tau^\lambda \!e^{Q(t,\xi,x+i\sigma v)}
\frac{\partial\psi}{\partial \bar{z}_j}(x+i\sigma v)F(x+i\sigma v)\,d\sigma dx\; \right\rvert
\leq C_2e^{-\gamma_2\lvert\xi\rvert}\leq C_2e^{-\omega_\M(\gamma_2\lvert \xi\rvert)}
\end{equation*}
for $\xi\in\Gamma_1$, 
$\lvert t\rvert\leq r/2$ and all $0<\tau<\lambda$.

In order to estimate the third integral in \eqref{FBI-neccesity1} we remark that by 
\eqref{exponentialdecay1} we have for a generic constant $C_3>0$ and all $k\in\N_0$ that
\begin{align*}
\left\lvert\; \sum_{j=1}^n\int_{B_{2r}}\!\int_\tau^\lambda \!e^{Q(t,\xi,x+i\sigma v)}\psi(x)
\frac{\partial F}{\partial \bar{z}_j}(x+i\sigma v)\,d\sigma dx\;\right\rvert
&\leq C_3\int_0^\infty\! e^{-c_0\sigma \lvert v\rvert\lvert\xi\rvert}h_\M(\rho\sigma \lvert v\rvert)\;
d\sigma \\
&\leq C_3\int_0^\infty\!e^{-c_0\sigma \lvert v\rvert\lvert\xi\rvert} \rho^k\sigma^k\lvert v\rvert^k m_k\, 
d\sigma\\
&=C_3\rho^km_kc_0^{-k}\lvert\xi\rvert^{-k}k!\\
&=C_3\rho_1^kM_k\lvert\xi\rvert^{-k}.\\
\intertext{Hence by Lemma \ref{lem-weight-con}}
\left\lvert\; \sum_{j=1}^n\int_{B_{2r}}\!\int_\tau^\lambda \!e^{Q(t,\xi,x+i\sigma v)}\psi(x)
\frac{\partial F}{\partial \bar{z}_j}(x+i\sigma v)\,d\sigma dx\;\right\rvert
&\leq C_3\tilde{h}_\M \bigl(\rho_1\lvert\xi\rvert^{-1}\bigr)\\
&\leq C_3e^{-\omega_\M(\rho_1\lvert\xi\rvert)}.
\end{align*}

In view of \eqref{FBI-neccesity1} we have shown that for $\xi\in\Gamma_1$
and $t$ in a small enough neighbourhood of $0$ there are constants $C,\gamma>0$ such that 
\begin{equation*}
\left\lvert\;\int_{B_{2r}}\!e^{Q(t,\xi,x+i\tau v)}\psi(x)F(x+i\tau v)\,dx \right\rvert\leq Ce^{-\omega_\M(\gamma\lvert\xi\rvert)}.
\end{equation*}
Note that in the estimate the constants $C$ and $\gamma$ depend on $\lambda$ but not on $\tau<\lambda$.
Thus \eqref{M-FBIestimate} is proven.

On the other hand, assume that \eqref{M-FBIestimate} holds for a point $(x_0,\xi_0)$, i.e.\
that there is a neighbourhood $V$ of $x_0$, an open cone $\Gamma\subseteq\R^n$ 
containing $\xi_0$ and constants $C,\gamma >0$ such that 
\begin{equation}\label{M-FBIestimate2}
\bigl\lvert\mathfrak{F}(\psi u)(x,\xi)\bigr\rvert\leq Ce^{-\omega_\M(\gamma\lvert\xi\rvert)}
\qquad x\in V,\; \xi\in \Gamma
\end{equation}
for some test function $\psi\in\D(\Omega)$ that is $1$ near $x_0$.
We may assume that $x_0=0$. We have to prove that $(0,\xi_0)\notin \WF_\M u$ or, equivalently, 
$(0,\xi_0)\notin \WF_\M v$ where $v=\psi u$. We invoke the inversion formula \eqref{FBI-inverse} for
the FBI transform
\begin{equation*}
v =\lim_{\eps\rightarrow\infty}\int_{\R^n\times\R^n}\!\!e^{i\xi(x-t)}e^{-\eps\lvert\xi\rvert^2}
\mathfrak{F}v(t,\xi)\lvert\xi\rvert^{\tfrac{n}{2k}}\,dtd\xi
\end{equation*}
and split the occuring integral into 4 parts
\begin{equation}
\int_{\R^n\times\R^n}\!\!e^{i\xi(x-t)}e^{-\eps\lvert\xi\rvert^2}
\mathfrak{F}v(t,\xi)\lvert\xi\rvert^{\tfrac{n}{2k}}\,dtd\xi =I_1^\eps(x) +I_2^\eps(x) +I_3^\eps(x) +I_4^\eps(x)
\end{equation}
where
\begin{align*}
I_1^\eps(x)&=\int_{\R^n}\int_{\lvert t\rvert\leq a}\!\!e^{i\xi(x-t)}e^{-\eps\lvert\xi\rvert^2}
\mathfrak{F}v(t,\xi)\lvert\xi\rvert^{\tfrac{n}{2k}}\,dtd\xi \\
I_2^\eps(x)&=\int_{\lvert\xi\rvert\leq B}\int_{a\leq\lvert t\rvert\leq A}\!\!e^{i\xi(x-t)}
e^{-\eps\lvert\xi\rvert^2}
\mathfrak{F}v(t,\xi)\lvert\xi\rvert^{\tfrac{n}{2k}}\,dtd\xi \\
I_3^\eps(x)&=\int_{\R^n}\int_{\lvert t\rvert\geq A}\!\!e^{i\xi(x-t)}e^{-\eps\lvert\xi\rvert^2}
\mathfrak{F}v(t,\xi)\lvert\xi\rvert^{\tfrac{n}{2k}}\,dtd\xi \\
I_4^\eps(x)&=\int_{\lvert\xi\rvert\geq B}\int_{a\leq \lvert t\rvert\leq A}\!\!
e^{i\xi(x-t)}e^{-\eps\lvert\xi\rvert^2}
\mathfrak{F}v(t,\xi)\lvert\xi\rvert^{\tfrac{n}{2k}}\,dtd\xi
\end{align*}
for certain constants $a$, $A$ and $B$ to be determined. 
Following Berhanu-Cordaro-Hounie \cite{MR2397326} we see that the first three integrals converge to holomorphic functions in a neighbourhood of the origin for $\eps\rightarrow 0$.

It remains to look at $I_1^\eps$. Suppose that $a$ is small enough such that
$B_a(0)\subseteq V$.
Let $\CC_j$, $1\leq j\leq N$ be open, acute cones such that
\begin{equation*}
\R^n=\bigcup_{j=1}^N\overline{\CC}_j
\end{equation*}
and the intersection $\overline{\CC}_j\cap\overline{\CC}_k$ has measure zero for $j\neq k$. Furthermore, let $\xi_0\in\CC_1$, 
$\CC_1\subseteq\Gamma$ and $\xi_0\notin\overline{\CC}_j$ for $j\neq 1$.
In particular that means that \eqref{M-FBIestimate2} holds on $B_a(0)\times\CC_1$, i.e.\
\begin{equation}\label{M-FBIestimate3}
\bigl\lvert\mathfrak{F}(\psi u)(x,\xi)\bigr\rvert\leq Ce^{-\omega_\M(\gamma\lvert\xi\rvert)}
\qquad x\in B_a(0),\; \xi\in \CC_1
\end{equation}

Furthermore for $j=2,\dotsc, N$ we can choose open cones $\Gamma_j$ with the property that $\xi_0\Gamma_j<0$ and
there is some positive constant $c_j$ such that
\begin{equation}\label{dualconeFBI}
\langle v,\xi\rangle\geq c_j\lvert v\rvert\cdot\lvert\xi\rvert\qquad \forall v\in\Gamma_j,\;\forall \xi\in\CC_j.
\end{equation}

We set
\begin{equation*}
f_j^\eps(x+iy)=\int_{\CC_j}\int_{B_a(0)}\!\!e^{i\xi(x+iy -t)-\eps\lvert\xi\rvert^2}
\mathfrak{F}v(t,\xi)\lvert\xi\rvert^{\tfrac{n}{2k}}\,dtd\xi
\end{equation*}
for $j\in\{2,\dots, N\}$. Note that each $f_j^\eps$ is entire if $\eps>0$ and for $\eps$ tending to $0$
the functions $f_j^\eps$ converge uniformly on compact subsets of the wedge $\R^m+i\Gamma_j$
to 
\begin{equation*}
f_j(x+iy)=\int_{\CC_j}\int_{B_a(0)}\!e^{i\xi(x+iy -t)}
\mathfrak{F}v(t,\xi)\lvert\xi\rvert^{\tfrac{n}{2k}}\,dtd\xi
\end{equation*}
which are also holomorphic on $\R^m\times i\Gamma_j$ due to \eqref{dualconeFBI}.

Similarly we define
\begin{align*}
f_1^\eps(x)&=\int_{\CC_1}\int_{B_a(0)}\!e^{i\xi(x -t)-\eps\lvert\xi\rvert^2}
\mathfrak{F}v(t,\xi)\lvert\xi\rvert^{\tfrac{n}{2k}}\,dtd\xi\\
\intertext{and}
f_1(x)&=\int_{\CC_1}\int_{B_a(0)}\!e^{i\xi(x -t)}\mathfrak{F}v(t,\xi)\lvert\xi\rvert^{\tfrac{n}{2k}}\,dtd\xi.
\end{align*}
The functions $f_1^\eps$, $\eps>0$,
 extend  to entire functions whereas $f_1$ is smooth due to \eqref{M-FBIestimate3} since $e^{-\omega_\M}$ is rapidly decreasing 
 (c.f.\ the remark after the proof of Lemma \ref{con}).
This decrease also shows that $f_1^\eps$ converges uniformly to $f_1$ in a neighbourhood of $0$ since
\begin{equation*}
\begin{split}
\bigl\lvert f_1(x)-f_1^\eps(x)\bigr\rvert&\leq\int_{\CC_1}\int_{B_a(0)}\!\bigl\lvert \mathfrak{F}v(t,\xi)\bigr\rvert 
\lvert\xi\rvert^{\tfrac{n}{2k}} \Bigl\lvert 1-e^{-\eps\lvert\xi\rvert^2}\Bigr\rvert\,dtd\xi\\
&\leq C\int_{\CC_1}\!\lvert\xi\rvert^{\tfrac{n}{2k}}e^{-\omega_\M(\gamma\lvert\xi\rvert)}\Bigl\lvert 1-e^{-\eps\lvert\xi\rvert^2}\Bigr\rvert\,d\xi
\end{split}
\end{equation*}
and the last integral converges to $0$ by the monotone convergence theorem.

In fact $f_1\in\E_\M$ because
\begin{equation*}
\begin{split}
\bigl\lvert D^\alpha f_1(x)\bigr\rvert 
&\leq\int_{\CC_1} \lvert\xi\rvert^{\tfrac{n}{2k}}
\bigl\lvert\xi^{\alpha}\mathfrak{F}v(t,\xi)\Bigr\rvert\,dtd\xi\\
&\leq C\int_{\CC_1}\lvert\xi\rvert^{\tfrac{n}{2k}+
\lvert\alpha\rvert} e^{-\omega_\M(\gamma \lvert\xi\rvert)}\,d\xi\\
&\leq C\int_{\CC_1}\lvert\xi\rvert^{\tfrac{n}{2k}-2n} 
\lvert\xi\rvert^{2n+\lvert\alpha\rvert} 
\tilde{h}_\M\biggl(\frac{1}{\gamma\lvert\xi\rvert}\biggr)\,d\xi\\
&\leq C\gamma^{-2n+\lvert\alpha\rvert}M_{2n+\lvert\alpha\rvert}
\int_{\CC_1}\lvert\xi\rvert^{\tfrac{n}{2k}-2n}\,d\xi\\
&\leq C\gamma^{\lvert\alpha\rvert}M_{\lvert\alpha\rvert},
\end{split}
\end{equation*}
where in the last step \eqref{derivclosed} is used.

So we have showed that on an open neighbourhood $U$ of the origin and some open cones $\Gamma_j$, 
$j=2,\dotsc, N$ that satisfy $\xi_0\Gamma_j<0$ we can write
\begin{equation*}
v\vert_U= v_0+\sum_{j=2}^N b_{\Gamma_j} f_j
\end{equation*}
with $v_0\in\E_\M(U)$ and $f_j$ holomorphic on $U +i\Gamma_j$ for $j=2,\dotsc,N$.
Hence $(0,\xi_0)\notin\WF_\M v$.
\end{proof}
We summarize our results regarding the description of $\WF_\M u$ in order to obtain 
the generalized Bony's Theorem alluded in the beginning of this section (c.f.\ Hoepfner-Medrado \cite{Hoepfner-MedradoPreprint}).
\begin{Thm}\label{BonyFBI}
Let $u\in\D^\prime(\Omega)$. For $(x_0,\xi_0)\in T^\ast\Omega\!\setminus\!\{0\}$ the following
statements are equivalent:
\begin{enumerate}
\item $(x_0,\xi_0)\notin \WF_\M u$
\item There are $U\in\U(x_0)$, open convex cones $\Gamma^j\subseteq\R^n$ with $\xi_0\Gamma^j <0$ and
 $\M$-almost analytic functions $F_j$ of slow growth in $U\times\Gamma^j_{\rho_j}$, $\rho_j>0$
and $j=1,\dotsc ,N$ for some $N\in\N$ such that 
\begin{equation*}
u\vert_{ U}=\sum_{j=1}^N b_{\Gamma^j}F_j.
\end{equation*}
\item There are $\varphi\in\D(\Omega)$ with $\varphi\equiv 1$ near $x_0$, $V\in\U(x_0)$ 
and an open cone $\Gamma$ containing $\xi_0$ such that \eqref{M-FBIestimate} holds.
\end{enumerate}
\end{Thm}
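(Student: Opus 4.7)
The plan is that this theorem is essentially a \emph{synthesis} of the characterizations of $\WF_\M u$ that have already been established in the preceding two sections, so the proof amounts to assembling these prior results in a coherent chain rather than reopening any substantive analysis.

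Concretely, I would first observe that the equivalence $(1)\Leftrightarrow (2)$ is exactly the content of Corollary \ref{WF-M-localdescr1}, whose proof relied on the boundary value characterization provided by Theorem \ref{BV-M-WF} together with the microlocal decomposition in Lemma \ref{WF-M-Micro-decomp}. The direction $(2)\Rightarrow (1)$ is immediate from Theorem \ref{Theorem-M-BVWF}: each boundary value $b_{\Gamma^j}F_j$ has wavefront set contained in $U\times (\Gamma^j)^\circ$, and the condition $\xi_0\Gamma^j<0$ guarantees $\xi_0\notin (\Gamma^j)^\circ$ for every $j$, so $(x_0,\xi_0)\notin\WF_\M u|_U$. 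The reverse direction $(1)\Rightarrow (2)$ is the harder direction, and it was supplied by the construction in Corollary \ref{WF-M-localdescr1}, where one splits $u$ microlocally using Lemma \ref{WF-M-Micro-decomp} along closed cones avoiding $\xi_0$ and then represents each piece as a boundary value via Theorem \ref{BV-M-WF} using the almost-analytic extension from Corollary \ref{CharMalmostanalytic}.

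Next, I would invoke Theorem \ref{M-FBIThm}, which is precisely the equivalence $(1)\Leftrightarrow (3)$. Its two directions have already been worked out in detail: the implication $(1)\Rightarrow (3)$ is obtained by inserting the boundary value representation from $(2)$ into the generalized FBI kernel, deforming the contour in the direction of a vector $v\in\Gamma$ with $\xi_0\cdot v<0$, applying Stokes' theorem, and bounding the remaining integrals using the $\M$-almost analytic decay combined with Lemma \ref{lem-weight-con}. The converse $(3)\Rightarrow (1)$ uses the inversion formula \eqref{FBI-inverse}, splits the integral into four pieces, and identifies each piece either as a holomorphic (hence $\M$-regular) contribution or as a boundary value, thereby reducing to $(2)$.

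Putting the two equivalences together yields the required triple equivalence $(1)\Leftrightarrow (2)\Leftrightarrow (3)$. There is essentially no new obstacle at this stage; the only issue that might warrant a sentence of comment is checking that the cones $\Gamma^j$ produced by Corollary \ref{WF-M-localdescr1} are compatible with the hypothesis $\xi_0\Gamma^j<0$ stated in $(2)$ (they are, by construction), and that the FBI condition $(3)$ in Theorem \ref{M-FBIThm} is stated in exactly the form \eqref{M-FBIestimate}. So the proof reduces to writing: \emph{$(1)\Leftrightarrow (2)$ is Corollary \ref{WF-M-localdescr1}, and $(1)\Leftrightarrow (3)$ is Theorem \ref{M-FBIThm}.}
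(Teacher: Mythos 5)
Your proposal is correct and coincides with the paper's treatment: the theorem is stated there as a pure summary, with $(1)\Leftrightarrow(2)$ being exactly Corollary \ref{WF-M-localdescr1} and $(1)\Leftrightarrow(3)$ being exactly Theorem \ref{M-FBIThm}, so no further argument is needed.
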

We can also give a local version of Theorem \ref{BonyFBI}.
\begin{Cor}
Let $u\in\D^\prime(\Omega)$ and $p\in\Omega$. Then the following is equivalent:
\begin{enumerate}
\item The distribution $u$ is of class $\E_\M$ near $p$.
\item There is a bounded sequence $(u_N)_N\subseteq\E^\prime(\Omega)$ and an open neighbourhood 
$V\subseteq \Omega$ of $p$
such that $u_N|_V=u|_V$ for all $N\in\N_0$ and \eqref{WF-M Estimate1} holds for $\Gamma =\R^n$ 
and some constant $Q>0$.
\item There exists an open neighbourhood $W\subseteq\Omega$ of $p$, $r>0$ and a smooth function
$F$ on $W+iB(0,r)$ such that $F|_W=u|_W$ and \eqref{Malmostest} holds for some constants $C,Q>0$.
\item There is a testfunction $\psi\in\D(\Omega)$ such that $\varphi_{\vert U}\equiv 1$ for 
some neighbourhood $U$ of $p$ and constants $C,\gamma>0$ such that 
\begin{equation*}
\sup_{(t,\xi)\in V\times \R^n}e^{\omega_\M(\gamma\lvert\xi\rvert)}
\bigl\lvert\mathfrak{F}(\psi u)(t,\xi) \bigr\rvert<\infty
\end{equation*}
for some $V\in\U(p)$.
\end{enumerate}
\end{Cor}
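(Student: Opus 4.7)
The plan is to read off the corollary as the non-microlocal specialization of the characterizations already in hand. The equivalence (1)$\Leftrightarrow$(2) is precisely Proposition \ref{MCharFT}. The equivalence (1)$\Leftrightarrow$(3) is Corollary \ref{CharMalmostanalytic} in one direction; for the converse, an estimate of the form \eqref{Malmostest} together with the boundary-value/distributional-limit argument in the proof of Theorem \ref{Theorem-M-BVWF} (applied with $\Gamma = \R^n$ so that $\Gamma^\circ = \{0\}$) shows that $F\vert_W$ is ultradifferentiable of class $\{\M\}$, so there is nothing new to prove beyond what has already been established.

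The substantive part is therefore (1)$\Leftrightarrow$(4), which I would obtain from Theorem \ref{BonyFBI} by a compactness argument over the unit sphere in the frequency variable. The implication (4)$\Rightarrow$(1) is immediate: condition (4) is exactly hypothesis (3) of Theorem \ref{BonyFBI} with $\Gamma = \R^n$, so every $\xi_0 \in \R^n\setminus\{0\}$ satisfies $(p,\xi_0)\notin\WF_\M u$, and by Theorem \ref{WF-MProperties}(2) this means $p\notin\mathrm{sing\,supp}_\M u$, i.e.\ $u$ is of class $\{\M\}$ near $p$.

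For (1)$\Rightarrow$(4), assume $u\in\E_\M$ near $p$, so $(p,\xi_0)\notin\WF_\M u$ for every $\xi_0\in S^{n-1}$. Theorem \ref{BonyFBI} gives, for each $\xi_0$, an open conic neighbourhood $\Gamma_{\xi_0}$, an open neighbourhood $V_{\xi_0}$ of $p$, a test function $\psi_{\xi_0}\in\D(\Omega)$ with $\psi_{\xi_0}\equiv 1$ on a smaller neighbourhood of $p$, and constants $C_{\xi_0},\gamma_{\xi_0}>0$ so that the FBI estimate \eqref{M-FBIestimate} holds on $V_{\xi_0}\times \Gamma_{\xi_0}$. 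By compactness of $S^{n-1}$, finitely many such $\Gamma_{\xi_1},\dots,\Gamma_{\xi_N}$ cover $\R^n\setminus\{0\}$. Fix a single cutoff $\psi\in\D(\Omega)$ with $\psi \equiv 1$ on an open neighbourhood $U$ of $p$ contained in $\bigcap_j V_{\xi_j}$ and with $\supp\psi\subseteq\bigcap_j\{\psi_{\xi_j}\equiv 1\}$; then $\psi\cdot\psi_{\xi_j} = \psi$, so applying Theorem \ref{BonyFBI} with this uniform $\psi$ (its proof in Theorem \ref{M-FBIThm} depends on the direction $\xi_0$ only through the admissible radius $r$, which can be shrunk to a common value) yields on each $V\times\Gamma_{\xi_j}$, $V = \bigcap_j V_{\xi_j}'$ a fixed smaller neighbourhood, the bound $|\mathfrak{F}(\psi u)(t,\xi)|\le C_j e^{-\omega_\M(\gamma_j|\xi|)}$. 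Taking $\gamma = \min_j\gamma_j$ and $C = \max_j C_j$ and recalling that $\omega_\M$ vanishes on $[0,1]$ (so small frequencies are harmless and in fact $\mathfrak{F}(\psi u)$ is bounded on bounded $\xi$-sets) gives the required estimate on $V\times\R^n$.

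The main technical obstacle I expect is precisely this bookkeeping for the compactness step: ensuring that one can work with a single cutoff $\psi$ independent of direction, and that the various constants $C_{\xi_j}$, $\gamma_{\xi_j}$ and the admissible radii arising in the proof of Theorem \ref{M-FBIThm} can be replaced by uniform ones. Once this is cleanly set up, the rest is formal.
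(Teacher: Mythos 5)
Your proof is essentially correct, but it is organized quite differently from the paper's. The paper does not prove $(3)\Rightarrow(1)$ or $(1)\Rightarrow(4)$ head-on; it closes a cycle $(1)\Rightarrow(3)\Rightarrow(4)\Rightarrow(1)$ (together with $(1)\Leftrightarrow(2)$), and the only substantive new work is $(3)\Rightarrow(4)$: starting from the $\M$-almost analytic extension $F$, one deforms the contour via $\theta(y)=y-is\varphi(y)\xi/\lvert\xi\rvert$ and applies Stokes' theorem, so the deformation direction tracks $\xi$ and the estimate $\lvert\mathfrak{F}(\psi u)(t,\xi)\rvert\leq Ce^{-\omega_\M(c\lvert\xi\rvert)}$ comes out uniformly over \emph{all} of $\R^n$ in a single stroke. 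You instead prove $(3)\Rightarrow(1)$ directly (via Theorem \ref{Theorem-M-BVWF} with $\Gamma=\R^n$, so $\Gamma^\circ=\{0\}$ and the wavefront set is empty — this is valid, though the paper never needs it) and obtain $(1)\Rightarrow(4)$ by covering $S^{n-1}$ with finitely many cones and patching the directional FBI estimates of Theorem \ref{M-FBIThm}. That route works, but the step where you claim the identity $\psi\cdot\psi_{\xi_j}=\psi$ lets you "apply Theorem \ref{BonyFBI} with this uniform $\psi$" is thinner than it reads: the FBI transform is not multiplicative in the cutoff, and Theorem \ref{BonyFBI} only asserts the existence of \emph{some} cutoff. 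What actually rescues the patching is either (a) rerunning the proof of Theorem \ref{M-FBIThm} with a common $\psi\in\D(B_{2r}(0))$, $\psi\equiv 1$ on $B_r(0)$, for a radius $r$ small enough that the boundary-value decompositions for all finitely many directions live on a common neighbourhood of $\overline{B_{2r}(0)}$ (which you gesture at parenthetically), or (b) the standard observation that replacing one admissible cutoff by another changes $\mathfrak{F}(\cdot)(t,\xi)$ by $O(e^{-c\lvert\xi\rvert})$ for $t$ near $p$, since the kernel $e^{-\lvert\xi\rvert p(t-x)}$ decays like $e^{-c\delta^{2k}\lvert\xi\rvert}$ when $\lvert t-x\rvert\geq\delta$. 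Either fix is routine, so I regard your argument as correct; the trade-off is that the paper's $\xi$-dependent contour deformation buys uniformity for free, while your compactness argument reuses the microlocal theorem as a black box at the cost of this bookkeeping.
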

\begin{proof}
The equivalence of (1) and (2) is just Proposition \ref{MCharFT},
 whereas Corollary \ref{CharMalmostanalytic} shows that (1) implies (3).
 For the fact that (4) implies (1) we note that by Theorem \ref{M-FBIThm} we have that for all 
 $\xi\in\R^n\!\setminus\!\{0\}$ $(p,\xi)\notin\WF_\M u$. 
 Therefore $u$ has to be ultradifferentiable of class $\{\M\}$ near $p$.
Now we show that (4) follows from (3): Suppose that $u\in\E_\M(V)$ on a neighbourhood of $p$ 
and let $F\in\E(W+i\R^n)$ be an $\M$-almost analytic extension of $u$ 
on a relatively compact neighbourhood $W\subset\subset V$ of $p$.
We choose $\varphi\in\D(W)$, $0\leq\varphi\leq 1$ and $\varphi\equiv 1$ near $p$. We consider the map
\begin{equation*}
\theta:\; y\longmapsto \theta(y)=y-is\varphi(y)\frac{\xi}{\lvert\xi\rvert}.
\end{equation*}
for some $1>s>0$ to be determined.

Finally let $\psi\in\D(V)$ such that $\psi\equiv 1$ on $W$. 
As in the proof of Theorem \ref{M-FBIThm} we set $\psi(z)=\psi(x)$ for $z=x+iy\in\C^n$.
We put  $v=\psi F$ and consider the $n$-form 
\begin{equation*}
e^{Q(t,\xi ,z)}v(z)\,dz_1\wedge\dots\wedge dz_n
\end{equation*}
on 
\begin{equation*}
D_s=\biggl\{x+i\sigma\varphi(x)\frac{\xi}{\lvert\xi\rvert}\in\C^n\;\Big\vert\; 0<\sigma <s,\; x\in\supp v\biggr\}.
\end{equation*}
Stokes' Theorem gives us 
\begin{equation*}
\begin{split}
\mathfrak{F}v(t,\xi)
&=c_p\int_{\theta(\R^n)}\!e^{Q(t,\xi,z)} v(z,\bar{z})\,dz_1\wedge\dots\wedge dz_n\\
&+c_p\sum_{j=1}^n \int_{D_s}\!e^{Q(t,\xi,z)}\frac{\partial v}{\partial \bar{z}_j}(z,\bar{z})\,d\bar{z}_j\wedge dz_1\wedge\dots\wedge dz_n.
\end{split}
\end{equation*}
The second integral above is estimated in the same way as the last integral in \eqref{FBI-neccesity1}.
On the other hand the first integral on the right-hand side equals
\begin{equation*}
G(t,\xi)=c_p\int_{\R^n}\! e^{Q(t,\xi,\theta(y))}v(\theta(y))\det\theta^\prime(y)\,dy
\end{equation*}
We note that
\begin{equation*}
\real Q(t,\xi,\theta(y))\leq -s\varphi(y)\lvert\xi\rvert\bigl(1+O(s\varphi(y)\bigr)-c_0\lvert t-y\rvert^{2k}
\end{equation*}
and hence 
\begin{equation*}
\begin{split}
\lvert G(t,\xi)\rvert &\leq C\!\!\int_{B_\delta (p)}\!\!\! e^{\real Q(t,\xi,\theta(y))}\,dy
 +C\negmedspace\int_{\substack{\R^n\setminus B_\delta (p)\\ y\in\supp (v\circ\theta)}}\negthickspace\!\!\!
  e^{\real Q(t,\xi,\theta(y))}\,dy\\
&=I_1(t,\xi)+I_2(t,\xi),
\end{split}
\end{equation*}
where $B_\delta(p)\subseteq\{x\in\R^n\mid\varphi(x)=1\}$,
can be estimated as follows, c.f.\ Berhanu-Cordaro-Hounie \cite{MR2397326}. Set $s=\delta/4$.
We obtain
\begin{equation*}
I_1(t,\xi)\leq Ce^{-c\lvert\xi\rvert}
\end{equation*}
for all $\xi\in\R^n$ if $t$ is in some bounded neighbourhood of $p$.
Furthermore
\begin{equation*}
I_2(t,x)\leq C\int_{\substack{\R^n\setminus B_r(p)\\ y\in\supp (u\circ\theta)}}\negthickspace e^{-\lvert\xi\rvert\lvert t-y\rvert^{2k}}\,dy
\leq Ce^{-\bigl(\tfrac{\delta}{2}\bigr)^{2k}\lvert\xi\rvert}
\end{equation*}
for all $\xi$ and $\lvert t-p\rvert\leq\tfrac{\delta}{2}$.

Hence we have showed that there are constants $c,C>0$ such that
\begin{equation*}
\lvert\mathfrak{F}u(t,\xi)\rvert\leq Ce^{-\omega_\M(c\lvert\xi\rvert)}
\end{equation*}
for all $\xi\in\R^n$ and $t$ in a bounded neighbourhood of $p$.
\end{proof}
\section{Elliptic regularity}\label{sec:elliptic}
As mentioned in the introduction Albanese-Jornet-Oliaro \cite{MR2595651}  used the pattern of 
H\"ormander's proof of  \cite[Theorem 8.6.1]{MR1996773} (c.f.\ Remark \ref{HoeClassCom})
to prove elliptic regularity for operators with coefficients that are all in the same ultradifferentiable class
defined by a weight function, c.f.\ Remark \ref{Discussion}. 
Similarly H\"ormander's methods were applied by   Pilipovic-Teafanov-Tomic
 \cite{MR3463541},\cite{Pilipovic:2016oj} for certain classes 
that are defined by more degenerate sequences. 

It should be noted that the assumptions Albanese-Jornet-Oliaro put 
on the weight functions guarantee that the associated class is closed 
under composition and the inverse function theorem holds.
So it would be a reasonable conjecture that the regularity of the defining weight sequence is necessary for elliptic regularity to hold in the category of Denjoy-Carleman classes.
But there are weight functions obeying these conditions such that the associated function class cannot be described by regular weight sequences and on the other hand there are regular Denjoy-Carleman classes that cannot be defined by such weight functions,
see Bonet-Meise-Melikhov \cite{MR2387040}. 
It turns out, however, that the regularity of the weight sequence is not enough for
the proof of the elliptic regularity theorem, we also have to assume that \eqref{mg} holds. 
In that case the main result of Bonet-Meise-Melikhov \cite{MR2387040} implies that
the Denjoy-Carleman class can be described by a weight function that satisfies the conditions of Albanese-Jornet-Oliaro \cite{MR2595651}. 
Hence, we could use their elliptic regularity theorem, but we would have to show that their definition of the ultradifferentiable wavefront set coincides with the definition of H{\"o}rmander. 
Instead we give here a proof in full detail partially in preparation for the forthcoming paper 
F\"urd\"os-Nenning-Rainer-Schindl \cite{WeightMatrix}, where we deal with the problem 
in the far more general setting of the ultradifferentiable classes 
introduced in Rainer-Schindl \cite{MR3285413}, c.f.\ Remark \ref{Discussion}.

Furthermore, 
we  show here that H\"ormander's proof can be modified
in a way to investigate the regularity of solutions of a determined system of linear partial differential equations
\begin{align*}
P_{11}u_1+\dots +P_{1\nu}u_\nu&=f_1\\
\vdots\;\qquad \qquad &\;\:\vdots\\
P_{\nu 1}u_1+\dots +P_{\nu\nu}u_\nu &=f_\nu
\end{align*}
where $P_{j,k}$, $1\leq j,k\leq\nu$, is a partial differential operator with $\E_\M$-coefficients.

More precisely, using the geometric theory for the ultradifferentiable wavefront set developed in section \ref{sec:BV}, we can work in the following setting (see H{\"o}rmander \cite[chapter 6]{MR1996773} or Chazarain-Piriou \cite{MR678605}).

Let $M$ be an ultradifferentiable manifold of class $\{\M\}$ and $E$ and $F$ two vector bundles of class 
$\{\M\}$ on $M$ with the same fiber dimension $\nu$. 
An ultradifferentiable partial differential operator $P:\;\E_\M(M, E)\rightarrow\E_\M(M,F)$ of class $\{\M\}$
is given locally by
\begin{equation}\label{operatorrep}
Pu=
\begin{pmatrix}
P_{11} &\cdots & P_{1\nu}\\
\vdots & \ddots & \vdots\\
P_{\nu 1}&\cdots & P_{\nu\nu}
\end{pmatrix}
\begin{pmatrix}
u_1\\
\vdots\\
u_\nu
\end{pmatrix}
\end{equation}
where the $P_{jk}$ are linear partial differential operators with ultradifferentiable coefficients defined in suitable
chart neighbourhoods. If 
\begin{equation*}
Q(x,D)=\sum_{\lvert\alpha\rvert\leq m} q_\alpha(x)D^\alpha
\end{equation*}
 is a differential operator of order $\leq m$ on some open set $\Omega\subseteq\R^n$ then the principal symbol $q$
 is defined to be 
 \begin{equation*}
 q(x,\xi)=\sum_{\lvert\alpha\rvert=m}q_\alpha(x) \xi^\alpha.
 \end{equation*}
 Hence the order of $P$ is of order $\leq m$ iff no operator $P_{jk}$ on any chart neighbourhood is of 
 order higher than $m$ and $P$ is of order $m$ if the operator is not of order $\leq m-1$.
 The principal symbol $p$ of $P$ is an ultradifferentiable mapping defined on 
 $T^\ast M$ with values in
 the space of fiber-linear maps from $E$ to $F$ that is homogenous of degree $m$ in the fibers of 
 $T^\ast M$. It is given locally by
 \begin{equation}\label{symbolrep}
 p(x,\xi)=\begin{pmatrix}
 p_{11}(x,\xi)&\dots&p_{1\nu}(x,\xi)\\
 \vdots&\ddots &\vdots\\
 p_{\nu 1}(x,\xi)&\dots &p_{\nu\nu}(x,\xi)
 \end{pmatrix}
 \end{equation}
 where $p_{jk}$ is the principal symbol of the operator $P_{jk}$. 
 See Chazarain-Piriou \cite{MR678605} for more details.
 We say that $P$  is not characteristic (or non-characteristic) 
 at a point $(x,\xi)\in T^\ast M\!\setminus\!\{0\}$ if $p(x,\xi)$ 
 is an invertible  linear mapping. We define the set of all characteristic points
 \begin{equation*}
 \Char P=\{(x,\xi)\in T^{\ast}M\!\setminus\!\{0\}\,\colon P\text{ is characteristic at }(x,\xi)\}.
 \end{equation*}
\begin{Thm}\label{elliptic-regThm}
Suppose that $\M$ is a regular weight sequence that satifies also \eqref{mg}.
Let $M$ be an $\E_{\M}$-manifold and $E,F$ two ultradifferentiable vector bundles on $M$ of the same 
fiber dimension.
If $P(x,D)$ is a differential operator between $E$ and $F$ with $\E_\M$-coefficients
and $p$ its principal symbol, then
\begin{equation}\label{elliptic-regEq}
\WF_{\M}u\subseteq \WF_\M (Pu)\cup \mathrm{Char\,}P\qquad u\in\D^{\prime}(M,E).
\end{equation}
\end{Thm}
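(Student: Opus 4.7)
The plan is to follow H\"ormander's scheme for the analytic case in \cite[Theorem 8.6.1]{MR1996773}, adapted to the ultradifferentiable setting using the tools developed in the previous sections. First I would reduce to a local statement on $\R^n$: the invariance of $\WF_\M$ under $\E_\M$-diffeomorphisms established in Section \ref{sec:BV} together with the local trivialization of $E$ and $F$ reduces \eqref{elliptic-regEq} to the statement that, for a $\nu\times\nu$ system $P$ of the form \eqref{operatorrep} with $\E_\M$-coefficients on an open $\Omega\subseteq\R^n$, any $(x_0,\xi_0)\notin\WF_\M(Pu)\cup\Char P$ lies outside $\WF_\M u$.

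Fix such a point. Since $\Char P$ is closed and conic, $p(x,\xi)$ is invertible on an open conic neighborhood of $(x_0,\xi_0)$; by Cramer's rule and the inverse-closedness of $\E_\M$ for regular weight sequences (see the remark after Remark \ref{HadamardLemma}), the entries of $p^{-1}(x,\xi)$ are $\E_\M$-functions of $x$ and rational of homogeneity degree $-m$ in $\xi$. This matrix will serve as the symbolic parametrix. Next I would apply Proposition \ref{WF-M Charakterisierung} to obtain, for any closed conic neighborhood $F$ of $\xi_0$ disjoint from a slightly larger cone on which $Pu$ is $\{\M\}$-microregular, a sequence of cutoffs $\chi_N\in\D(\Omega)$ supported in a small neighborhood of $x_0$, equal to $1$ near $x_0$, and satisfying the required derivative bounds. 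Then I would write $P(\chi_N u)=\chi_N Pu+[P,\chi_N]u$ and use the non-characteristicness to absorb a factor $\xi^{Nm}$ via $p^{-1}(x,\xi)$, transferring the estimate for $\widehat{\chi_N u}(\xi)$ on $F$ onto estimates for $\widehat{\chi_N Pu}$ (good by hypothesis on $Pu$) and for the Fourier transform of the commutator term, which involves strictly fewer derivatives of $u$ and can be iterated.

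The core quantitative step is to show that after $N$ such iterations one obtains
\begin{equation*}
\bigl|\widehat{\chi_N u}(\xi)\bigr|\le CQ^N M_N |\xi|^{-N}\qquad\xi\in F,
\end{equation*}
for constants $C,Q$ independent of $N$. The Leibniz expansion of each application of the parametrix distributes new derivatives onto the $\E_\M$-coefficients of $P$, onto $\chi_N$, and onto $u$. The ultradifferentiable bounds on the coefficients yield a factor of the form $C^j h^j M_j$ on a $j$-th derivative of a coefficient, and Proposition \ref{WF-M Charakterisierung} controls the derivatives of $\chi_N$ by a factor involving $M_N^{|\beta|/N}$; combining these via Leibniz one is forced to estimate products like $M_{j_1}M_{j_2}\cdots M_{j_r}$ and replace them by a single factor $M_{j_1+\cdots+j_r}$ up to an exponential in $j_1+\cdots+j_r$. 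This is exactly what \eqref{mg}, in the form $M_{j+k}\leq C\rho^{j+k}M_jM_k$, permits (iteratively).

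The main obstacle is this combinatorial bookkeeping: keeping track of how the constants accumulate across the $N$-fold iteration so that the bound on $|\widehat{\chi_N u}(\xi)|$ remains of the form $Q^N M_N|\xi|^{-N}$ with $Q$ independent of $N$. Without moderate growth the product of several $M_{j_i}$ cannot be bounded by $M_{\sum j_i}$ up to a geometric factor, and the estimate breaks down; this explains why \eqref{mg} enters the hypothesis even though the previous sections only used \eqref{derivclosed}--\eqref{analyticincl}. Once these estimates are in place, Definition \ref{WF-M Def1} yields $(x_0,\xi_0)\notin\WF_\M u$, and \eqref{elliptic-regEq} follows. The matrix nature of $P$ enters only trivially, since $p^{-1}(x,\xi)$ is a matrix of scalars of the same regularity class and all estimates pass componentwise.
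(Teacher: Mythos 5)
Your proposal correctly identifies the main ingredients — reduction to a chart, invariance of $\WF_\M$ from Section \ref{sec:BV}, the cutoff sequence from Proposition \ref{WF-M Charakterisierung}, the role of $p^{-1}(x,\xi)$ via inverse-closedness of $\E_\M$, and the indispensability of \eqref{mg} for the combinatorics — but the mechanism you describe for the core estimate does not quite work as stated, and the gap is exactly where the work is.

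The step ``write $P(\chi_N u)=\chi_N Pu+[P,\chi_N]u$ and absorb $\xi^{Nm}$ via $p^{-1}(x,\xi)$'' is not available for an operator with variable coefficients, because $P$ is not a Fourier multiplier: you cannot pass from $\widehat{P(\chi_N u)}(\xi)$ to $\widehat{\chi_N u}(\xi)$ by dividing by a symbol in $\xi$. Attempting to force this produces a remainder containing the full variable-coefficient symbol expansion, and the commutator $[P,\chi_N]u$ is applied to $u$ itself, not to $\chi_{N'}u$ for some new cutoff, so the recursion does not close. The paper resolves this precisely: it passes to the \emph{formal adjoint} $Q=P^t$, uses the ansatz $g^\tau=e^{-ix\xi}B(x,\xi)w^\tau$ with $B=(\transp{p})^{-1}$ so that $Qg^\tau=e^{-ix\xi}\Lambda^\tau_{2N}$ forces the equation $w^\tau-Rw^\tau=\Lambda^\tau_{2N}$ with $R=R_1+\dots+R_m$ homogeneous of negative degree in $\xi$, and then \emph{truncates} the Neumann series at length $N-m$ to define $w^\tau_N$. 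The key identity
\begin{equation*}
\hat{u}^\tau_N(\xi)=\bigl\langle Pu,\,e^{-i\langle\cdot,\xi\rangle}B(\cdot,\xi)w^\tau_N\bigr\rangle+\bigl\langle u,\,e^{-i\langle\cdot,\xi\rangle}\rho^\tau_N(\cdot,\xi)\bigr\rangle
\end{equation*}
is pure algebra and does not require any Fourier division; the decay in $\xi$ comes from the homogeneity degree $-j$ of $R_{j_1}\cdots R_{j_k}$ together with Proposition \ref{WF-M Charakterisierung}, and the moderate-growth condition \eqref{mg} is then used inside Lemma \ref{Lemma 1} to control the products $M_{|\alpha_1|}\cdots M_{|\alpha_{j-1}|}$ that Leibniz produces when $R_{j_1}\cdots R_{j_k}$ hits $\lambda_{2N}$. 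So you have the right philosophy and the right obstacle, but the route through commutators needs to be replaced by the adjoint/truncated-parametrix ansatz; without that reorganization the estimate cannot be closed.
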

\begin{proof}
We write $f=Pu$.
Since the problem is local we work on some chart neighbourhood $\Omega$ such that 
in suitable trivializations of $E$ and $F$ we may write 
$u=(u_1,\dotsc,u_\nu)\in\D^\prime(\Omega,\C^\nu)$, 
$f=(f_1,\dotsc,f_\nu)\in\D^\prime(\Omega,\C^\nu)$
and $P$ and its principal symbol $p$ are of the form \eqref{operatorrep} and \eqref{symbolrep}, respectively.
In particular, $P$ is of order $m$ on $\Omega$.

We have to prove that if $(x_0,\xi_0)\notin \WF_\M f\cup \Char P$
then $(x_0,\xi_0)\notin\WF_\M u$.
Assuming this we find that there has to be a compact neighbourhood $K$ of $x_0$ and
a closed conic neighbourhood $V$ of $\xi_0$ in $\R^n\!\setminus\!\{0\}$ satisfying
\begin{align}
\det p(x,\xi)&\neq 0\qquad (x,\xi)\in K\times V\\
(K\times V)\cap \WF_\M (Pu)_j&=\emptyset\qquad j=1,\dots,\nu.
\end{align}

We consider the formal adjoint $Q=P^t$ of $P$ with respect of the pairing
\begin{equation*}
\langle f,g\rangle=\sum_{\tau=1}^\nu\int\!\! f_\tau (x)g_\tau (x)\,dx\qquad f,g\in\D(\Omega,\C^\nu).
\end{equation*}
If $P=(P_{jk})_{jk}$ then $Q=(Q_{jk})_{jk}=(P^t_{kj})_{jk}$ where $P_{jk}^t$ denotes the formal adjoint
of the scalar operator $P_{jk}(x,D)=\sum p^{\alpha}_{jk}(x)D^{\alpha}$, i.e.\ for $v\in\E(\Omega)$
\begin{equation*}
P^{t}_{jk}(x,D)v=\sum_{\lvert\alpha\rvert\leq m}(-D)^{\alpha}\Bigl(p^{\alpha}_{jk}(x)v(x)\Bigr).
\end{equation*}

Let $(\lambda_{N})_N\subseteq\D(K)$ be a sequence of test functions satisfying 
$\lambda_{N}\vert_{ U}\equiv 1$ on a fixed neighbourhood $U$ of $x_0$ for all $N$ and
for all $\alpha\in\N_0^n$ there are constants $C_{\alpha},h_{\alpha}>0$ such that
\begin{equation}\label{testfunctestimate}
\bigl\lvert D^{\alpha +\beta}\lambda_N\bigr\rvert\leq C_\alpha (h_\alpha N)^{\lvert\beta\rvert},\qquad \lvert\beta\rvert\leq N.
\end{equation}
If $u=(u^1,\dotsc ,u^\nu)\in\D^{\prime}(\Omega,\C^\nu)$, then we have that the sequence 
$u^{\tau}_N=\lambda_{2N}u^\tau$ is bounded in $\E^\prime$ and 
each of these distributions is equal to $u^\tau$ 
in $U$ for all $\tau$. 
Hence we have to prove that $(u^{\tau}_N)_N$ satisfies \eqref{WF-M Estimate1}, i.e.\
\begin{equation*}
\sup_{\substack{\xi \in V\\ N\in\N_0}}\frac{\lvert\xi\rvert^N \bigl\lvert\hat{u}_N^\tau\bigr\rvert}{Q^N M_N}<\infty
\end{equation*}
for a constant $Q>0$ independent of $N$.

In order to do so, set 
$\Lambda_N^\tau=\lambda_N e_\tau\in\D^\prime(\Omega,\C^\nu)$ and observe
\begin{equation*}
\hat{u}_N^\tau (\xi)=\bigl\langle u^\tau , e^{-i\langle\,.\,,\xi\rangle}\lambda_{2N}\bigr\rangle
=\bigl\langle u,e^{-i\langle\,.\,,\xi\rangle}\Lambda^\tau_{2N}\bigr\rangle.
\end{equation*}

Following the argument of H{\"o}rmander in the proof of \cite[Theorem 8.6.1]{MR1996773} we want to solve the equation
$Q g^\tau=e^{-ix\xi}\Lambda^{\tau}_{2N}$. We make the ansatz
\begin{equation*}
g^\tau=e^{-ix\xi}B(x,\xi)w^\tau
\end{equation*}
where $B(x,\xi)$ is the inverse matrix of the transpose of $p(x,\xi)$, 
which exists if $(x,\xi)\in K\times V$ and is homogeneous of degree $-m$ in $\xi$;
 note that the principal symbol of $Q=P^t$ is $B^{-1}(x,-\xi)$.
Using this we conclude that $w$ has to satisfy 
\begin{equation}\label{formal equation}
w^\tau-Rw^\tau=\Lambda^\tau_{2N}.
\end{equation}
Here $R=R_1+\dots +R_m$ with $R_j\lvert\xi\rvert^j$ being (matrix) differential operators of order $\leq j$ 
with coefficients in $\E_\M$ that are homogeneous of degree $0$ in $\xi$ if $x\in K$ and $\xi\in V$.

A formal solution of \eqref{formal equation} would be
\begin{equation*}
w^\tau=\sum_{k=0}^{\infty}R^k\Lambda^{\tau}_{2N}.
\end{equation*}
However, this sum may not converge and even if it would converge, in the estimates we want to obtain 
we are not allowed to consider derivatives of arbitrary high order. Hence we set
\begin{equation*}
w_N^\tau:=\sum_{j_1+\dots +j_k\leq N-m}R_{j_1}\cdots R_{j_k}\Lambda^\tau_{2N}
\end{equation*}
and compute
\begin{equation*}
w^\tau_N-Rw^\tau_N=\Lambda^{\tau}_{2N}-\sum_{\sum\limits_{s=1}^k j_s>N-m\geq\sum\limits_{s=2}^k j_s} R_{j_1}\dots R_{j_k}\Lambda^{\tau}_{2N}=\Lambda^{\tau}_{2N}-\rho^\tau_N.
\end{equation*}
Equivalently, we have
\begin{equation*}
Q\bigl(e^{-ix\xi}B(x,\xi)w_{N}^\tau\bigr)=e^{-ix\xi}\bigl(\Lambda^{\tau}_{2N}(x)-\rho^\tau_N(x,\xi)\bigr).
\end{equation*}
We obtain now
\begin{equation}\label{WF-M-EllipticRegEq1}
\begin{split}
\hat{u}^{\tau}_N(\xi) 
&=\bigl\langle u , e^{-i\langle\,.\,,\xi\rangle}\Lambda^{\tau}_{2N}\bigr\rangle \\
&=\bigl\langle u, Q\bigl(e^{-i\langle\,.\,,\xi\rangle} B(\,.\, , \xi) w^{\tau}_{N}\bigr)\bigr\rangle
 +\langle u ,e^{-i\langle\,.\,,\xi\rangle}\rho_{N}^{\tau}(\,.\,,\xi)\rangle\\
&=\langle f, e^{-i\langle\, .\,,\xi\rangle} B(\, .\, , \xi ) w^{\tau}_N\rangle
+\langle u,e^{-i\langle\, .\,,\xi\rangle} \rho_N^{\tau} (\, .\, ,\xi)\rangle
\end{split}
\end{equation}
and continue by estimating  the right-hand side of \eqref{WF-M-EllipticRegEq1}.
For this purpose we need the following Lemma.
\begin{Lem}\label{Lemma 1}
There exists constants $C$ and $h$ depending only on $R$ and the constants 
appearing in \eqref{testfunctestimate} such that,
 if $j=j_1+\dots +j_k$ and $j+\lvert\beta\rvert\leq 2N$,
we have
\begin{equation}\label{WF-M-RegEst1}
\left\lvert D^{\beta}\bigl(R_{j_1}\dots R_{j_k}\Lambda^{\tau}_{2N}\bigr)_\sigma\right\rvert\leq Ch^N M^{\tfrac{j+\lvert\beta\rvert}{N}}_{N}\lvert\xi\rvert^{-j}
\qquad \xi\in V,\sigma=1,\dotsc,\nu.
\end{equation}
\end{Lem}
\begin{proof}
Since both sides of \eqref{WF-M-RegEst1} are homogeneous of degree $-j$ in $\xi\in V$ 
it suffices to prove the lemma for $\lvert\xi\rvert=1$. 
Moreover we can write 
\begin{equation*}
\bigl(R_{j_1}\cdots R_{j_k}\Lambda^{\tau}_{2N}\bigr)_\sigma=\tilde{R}^{\tau}_{\sigma}\lambda_{2N}\qquad\quad 
\sigma=1,\dotsc,\nu
\end{equation*}
with $\tilde{R}^{\tau}_{\sigma}$ being a certain linear combination of products of components of the 
operators $R_{j_s}$.
Especially the coefficients of $\tilde{R}^{\tau}_{\sigma}$ are all of class $\{\M\}$ 
on a common neighbourhood of $K$ and since there are only finitely many of them we may assume that they all
can be considered as elements of $\E^q_\M(K)$ for some $q>0$. 
We denote the set of the coefficients of the operators $\tilde{R}^\tau_{2N}\sigma$ by 
$\mathcal{R}$.
Recall also from Remark \ref{HoeClassCom} that $\sqrt[N]{M_N}\rightarrow \infty$ and that
 there has to be a constant $\delta>0$ such that $N\leq \delta \sqrt[N]{M_N}$. 
Hence  \eqref{testfunctestimate} implies that for all $\alpha\in\N_0^n$ 
we have
\begin{equation}\label{testfunctestimate2}
\bigl\lvert D^{\alpha+\beta}\lambda_{2N}\bigr\rvert\leq C_\alpha h_{\alpha}^{\lvert\beta\rvert}(2N)^{\lvert\beta\rvert}
\leq C_\alpha (2h_{\alpha}\delta)^{\lvert\beta\rvert}
 M_{N}^{\tfrac{\lvert\beta\rvert}{N}}
\end{equation}
for $\lvert\beta\rvert\leq 2N$.

Considering all these arguments the proof of the lemma is a consequence of the following result.
\end{proof}
\begin{Lem}
Let $K\subseteq \Omega$ be compact, $(\lambda_N)_N\subseteq\D (K)$ a sequence satisfying
 \eqref{testfunctestimate2}, $q\geq 1$ and $a_1,\dotsc,a_{j-1}\in\mathcal{R}\cup\{1\}$. 
  Then there are constants $C,h>0$ independent of $N$ such that for $j\leq 2N$ we have
 \begin{equation}
\bigl\lvert D_{i_1}(a_1D_{i_2}(a_2\dots D_{i_{j-1}}(a_{j-1}D_{i_j}\lambda_{2N})\dots))\bigr\rvert\leq C h^j M_N^{\tfrac{j}{N}}.
 \end{equation}
\end{Lem}
\begin{proof}
We begin by noting  that \eqref{stlogconvex} implies that $m_jm_{k-j}\leq m_k$ for all $j\leq k\in\N$,
c.f.\ Komatsu \cite{MR0320743}.
Furthermore we can assume that there is a constant $C_1>1$ such that for all $k\leq j-1$
\begin{equation*}
\bigl\lvert D^\alpha a_{k}\bigr\rvert\leq
 C_1q^{\lvert\alpha\rvert}M_{\lvert\alpha\rvert}
\end{equation*}
on $K$.
Obviously the expression $ D_{i_1}a_1D_{i_2}a_2\dots D_{i_{j-1}}a_{j-1}D_{i_j}\lambda_{2N}$ 
can be written as a sum of terms of the form
$(D^{\alpha_1}a_1)\dotsb (D^{\alpha_{j-1}}a_{j-1})D^{\alpha_j}\lambda_{2N}$ where
$\lvert\alpha_1\rvert+\dots +\lvert\alpha_j\rvert=j$.

We set $h\geq C_1\max (q,h_0)$.
If there are $C_{k_1,\dots,k_j}$ terms with $\lvert\alpha_1\rvert =k_1,\dots,\lvert\alpha_j\rvert=k_j$ then 
we have the following estimate on $K$
\begin{equation*}
\begin{split}
\bigl\lvert D_{i_1}a_1D_{i_2}a_2\dots D_{i_{j-1}}a_{j-1}D_{i_j}\lambda_{2N}\bigr\rvert
&\leq C\sum q^{j-k_j}C_1^{j-1}C_{k_1,\dots,k_j}m_{k_1}\cdots m_{k_{j-1}}k_1!\cdots k_{j-1}!h_0^{k_j}M_N^{\tfrac{k_j}{N}}\\
&\leq C h^j\sum m_{j-k_j}C_{k_1,\dotsc,k_j}k_1!\cdots k_{j-1}! M_N^{\tfrac{k_j}{N}}\\
&\leq C h^j\sum C_{k_1,\dotsc,k_j}\frac{k_1!\cdots k_{j-1}!}{(j-k_j)!}M_{j-k_j}M_N^{\tfrac{k_j}{N}}.
\end{split}
\end{equation*}
Since $j-k_j\leq 2N$, we observe that \eqref{mg} implies that there are two indices $\sigma_1,\sigma_2\leq N$, $\sigma_1+\sigma_2=k-k_j$ such that $M_{k-k_j}\leq C\rho^{j-k_j}M_{\sigma_1}M_{\sigma_2}$ for some constants $C,\rho$ that are independent of $j$ and $N$.
Now we have
\begin{equation*}
M_{j-k_j}M_N^{\tfrac{k_j}{N}}=C\rho^{j-k_j}M_{\sigma_1}M_{\sigma_2}M_N^{\tfrac{k_j}{N}}
\leq C\rho^{j-k_j}M_N^{\tfrac{\sigma_1+\sigma_2}{N}}M_N^{\tfrac{k_j}{N}}=C\rho^{j-k_j}M_N^{\tfrac{j}{N}}.
\end{equation*}
since $\sqrt[N]{M_N}$ is increasing.
As noted in \cite{MR2595651} it is possible to estimate
\begin{equation*}
\frac{k_1!\cdots k_{j-1}!}{(j-k_j)!}=\frac{k_1!\cdots k_{j-1}!k_j!j!}{(j-k_j)!k_j!j!}\leq 2^j\frac{k_1!\cdots k_j!}{j!},
\end{equation*}
and also (c.f.\ \cite[p. 308]{MR1996773})
\begin{equation*}
\sum C_{k_1,\dotsc,k_j}k_1!\cdots k_j!=(2j-1)!!.
\end{equation*}

Since $\tfrac{(2j-1)!!}{j!2^j}\leq 1$ we obtain
\begin{equation*}
\begin{split}
\bigl\lvert D_{i_1}a_1D_{i_2}a_2\dots D_{i_{j-1}}a_{j-1}D_{i_j}\lambda_{2N}\bigr\rvert
&\leq C (4\rho h)^j\frac{(2k-1)!!}{j!2^j}M_N^{\tfrac{j}{N}}\\
&\leq C (4\rho h)^jM_N^{\tfrac{j}{N}}.
\end{split}
\end{equation*}
\end{proof}
In order to estimate $\hat{u}^\tau_N$, we note that due to the boundedness of the sequence 
$(u^\tau_N)_N\subseteq\E^{\prime}$ the Banach-Steinhaus theorem implies 
that there are constants $\mu$ and $c$ such that
\begin{equation*}
\lvert\hat{u}_N^\tau\rvert\leq c\bigl(1+\lvert\xi\rvert\bigr)^\mu
\end{equation*}
for all $N$ and therefore if $\lvert\xi\rvert\leq \sqrt[N]{M_{N}}$ then
\begin{equation}\label{EllipticREGULAR0}
\bigl\lvert\xi\rvert^N\lvert\hat{u}_N^\tau\rvert\leq CM_{N}^{\tfrac{N+\mu}{N}}
\leq C\delta^{\mu N}M_N,
\end{equation}
since \eqref{derivclosed} implies that there is a constant $\delta>0$ such that
$\sqrt[N]{M_N}\leq \delta\sqrt[N-1]{M_{N-1}}$.

Hence it suffices to estimate the terms on the right-hand side of \eqref{WF-M-EllipticRegEq1} for $\xi\in V$, $\lvert\xi\rvert>\sqrt[N]{M_{N}}$.
We begin with the second term.

As in the scalar case there are constants $\mu$ and $C>0$ that only depend on $u$ and $K$ such that for all
$\psi\in\D(\Omega, \C^\nu)$ with $\supp \psi\subseteq K$
\begin{equation*}
\bigl\lvert\langle u,\psi\rangle\bigr\rvert
\leq C\sum_{\lvert\alpha\rvert\leq \mu}\sup_K\bigl\lvert D^{\alpha}\psi\bigr\rvert.
\end{equation*}
Note that $\supp_x\rho_N^\tau(\,.\,,\xi)\subseteq K$ for all $\xi\in V$ and $N\in\N$. Thence
 \begin{equation*}
 \begin{split}
\bigl\lvert \langle u,e^{-i\langle\, .\,,\xi\rangle} \rho_N^{\tau} (\, .\, ,\xi)\rangle\bigr\rvert 
&\leq C \sum_{\lvert\alpha\rvert \leq \mu} \sum_{\beta\leq\alpha}
 \lvert\xi\rvert^{\lvert\alpha\rvert-\lvert\beta\rvert} 
 \sup_{x\in K}\bigl\lvert D^{\beta}_x\rho^\tau_N(x,\xi)\bigr\rvert\\
&\leq C\sum_{\lvert\alpha\rvert\leq \mu}
\lvert\xi\rvert
^{\mu-\lvert\alpha\rvert} \sup_{x\in K}\bigl\lvert D_x^{\alpha}\rho^{\tau}_N(x,\xi)\bigr\rvert
 \end{split}
 \end{equation*}
 for $\xi\in V$, $\lvert\xi\rvert\geq 1$ and $N\in \N$.
 There are at most $2^N$ terms of the form $R_{j_1}\dots R_{j_k}\Lambda^{\tau}_{2N}$
in  $\rho_N^\tau$ and each term can be estimated by
 \eqref{WF-M-RegEst1} setting $N\geq j>N-m$ and hence
 \begin{equation*}
 \bigl\lvert D_x^{\alpha}\rho^\tau_N(x,\xi)\bigr\rvert\leq Ch^{N}2^N 
 \lvert \xi\rvert^{m-N}M_{N}^{\tfrac{N+\lvert\alpha\rvert}{N}}
 \end{equation*}
 for $x\in K$ and $\xi\in V$, $\lvert\xi\rvert>1$.
 Applying \eqref{derivclosed} therefore gives
 \begin{equation}\label{EllipticREGULAR1}
\begin{split}
 \bigl\lvert \langle u,e^{-i\langle\, .\,,\xi\rangle} \rho_N^{\tau} (\, .\, ,\xi)\rangle\bigr\rvert
 &\leq C h^N 2^{N}\lvert\xi\rvert^{\mu+m-N}M_{N}^{\tfrac{N+\mu}{N}}\\
&\leq C h^N \lvert\xi\rvert^{\mu+m-N}M_N.
\end{split}
 \end{equation}
 
The first term in \eqref{WF-M-EllipticRegEq1} is more difficult to estimate.
To begin with, observe that Lemma \ref{Lemma 1} gives 
 \begin{equation*}
 \begin{split}
\bigl\lvert D^{\beta}w^\tau_N(x,\xi)\bigr\rvert
&\leq C h^N\sum_{j=0}^{N-m}  M_{N}^{\tfrac{j+\lvert\beta\rvert}{N}}\lvert\xi\rvert^{-j}\\
&\leq Ch^N M_{N}^{\tfrac{\lvert\beta\rvert}{N}}\sum_{j=0}^{N-m}M_{N}^{\tfrac{j-j}{N}}\\
&\leq C h^N M_{N}^{\tfrac{\lvert\beta\rvert}{N}}(N-m)\\
&\leq C h^N M_{N}^{\tfrac{\lvert\beta\rvert}{N}}
\end{split}
 \end{equation*}
for $N>m$, $\lvert\beta\rvert\leq N$ and $\xi\in V$, $\lvert\xi\rvert > \sqrt[N]{M_{N}}$.
Recall that for $N\leq m$ we have set $w^\tau_N=\Lambda^\tau_{2N}=\lambda^\tau_{2N}e_\tau$.
Hence by the above and \eqref{testfunctestimate2} it follows that
\begin{equation}\label{WF-M-ell-Auxest1}
\bigl\lvert D^{\beta}w^\tau_N(x,\xi)\bigr\rvert\leq C h^NM_{N}^{\tfrac{\lvert\beta\rvert}{N}}
\end{equation}
for all $N\in\N$, $\lvert\beta\rvert\leq N$ and $\xi\in V$, $\lvert\xi\rvert>\sqrt[N]{M_{N}}$.

On the other hand, since the components of $B(x,\xi)$ are ultradifferentiable of class $\{\M\}$ 
and homogeneous in $\xi\in V$
 of degree $- m$  we note that it is possible to show similarly to above, 
 using an analogue to Lemma \ref{Lemma 1},
the following estimate
\begin{equation}\label{WF-M-ell-Auxest2}
 \bigl\lvert D^{\beta}_x\bigl(w_N^\tau(x,\xi)\lvert\xi\rvert^{m}B(x,\xi)\bigr)\bigr\rvert
 \leq C h^N M_{N}^{\tfrac{\lvert \beta\rvert}{N}} \qquad \lvert\beta\rvert\leq N,\,\xi\in V,\,\lvert\xi\rvert > \sqrt[N]{M_{N}}.
 \end{equation}
 
In order to finish the proof of Theorem \ref{elliptic-regThm} we need an additional Lemma.
 \begin{Lem}
Let $f\in\D^{\prime}(\Omega)$, $K$ be a compact subset of $\Omega$ 
and $V\subseteq\R^n\!\setminus\!\{0\}$ a closed cone such that
\begin{equation*}
\WF_\M f\cap (K\times V)=\emptyset.
\end{equation*}
Furthermore let $w_N\in\D (\Omega\times V)$ such that $\supp w_N\subseteq K\times V$ and  \eqref{WF-M-ell-Auxest1} holds.

If $\mu$ denotes the order of $f$ in a neighbourhood of $K$ then
\begin{equation}\label{WF-M-ell-Auxest 3}
\Bigl\lvert \widehat{w_Nf}(\xi)\Bigr\rvert
=\bigl\lvert\bigl\langle w_N(\;.\;,\xi)f,e^{-i\langle \,.\,,\xi\rangle}\bigr\rangle\bigr\rvert
\leq C h^N\lvert\xi\rvert^{\mu +n-N}M_{N-\mu-n},
\end{equation}
for $N>\mu+n$ and $\xi\in \Gamma$, $\lvert\xi\rvert>\sqrt[N]{M_{N}}$.
 \end{Lem}
 \begin{proof}
 By Proposition \ref{WF-M Charakterisierung} we can find a sequence $(f_N)_N$ that is bounded in 
 $\E^{\prime ,\mu}$ and equal to $f$ in some neighbourhood of $K$ and 
 \begin{equation}\label{ellipticHelp1}
 \left\lvert\hat{f}_N(\eta)\right\rvert\leq C \frac{Q^NM_N}{\lvert \eta\rvert^N}\qquad\eta\in W
 \end{equation}
 where $W$ is a conic neighbourhood of $\Gamma$. Then $w_Nf=w_Nf_{N^\prime}$ for
 $N^{\prime}=N-\mu -n$. 
 
If we denote the partial Fourier transform of $w_N(x,\xi)$ by
\begin{equation*}
\hat{w}_N(\eta,\xi)=\int_\Omega \! e^{-ix\eta}w_N(x,\xi)\,dx
\end{equation*}
then obviously \eqref{WF-M-ell-Auxest1} is equivalent to
\begin{equation*}
\bigl\lvert\eta^\beta\hat{w}_N(\eta,\xi)\bigr\rvert\leq C h^N M_{N}^{\tfrac{\lvert\beta\rvert}{N}}
\end{equation*}
for $\lvert\beta\rvert\leq N$, $\xi\in V$, $\lvert\xi\rvert>\sqrt[N]{M_{N}}$ and $\eta\in\R^n$.
Since $\lvert\eta\rvert\leq \sqrt{n}\max\lvert\eta_j\rvert$ we conclude that
\begin{equation}\label{EllipticHelp1a}
\lvert\eta\rvert^\ell\lvert\hat{w}_N(\eta,\xi)\rvert\leq C h^N M^{\tfrac{\ell}{N}}_{N}
\end{equation}
for $\ell\leq N$, $\eta\in\R^n$ and $\xi\in V$, $\lvert\xi\rvert>\sqrt[N]{M_{N}}$.
Hence we obtain \begin{equation}\label{ellipticHelp2}
 \begin{split}
 \Biggl(\lvert\eta\rvert+M_{N}^{\tfrac{1}{N}}\Biggr)^N\bigl\lvert\hat{w}_N(\eta,\xi)\bigr\rvert
 & =\sum_{k=0}^N\binom{N}{k}M_{N}^{\tfrac{k}{N}}\lvert \eta\rvert^{N-k}\lvert\hat{w}_N(\eta,\xi)\bigr\rvert\\
 &\leq Ch^N\sum_{k=0}^N\binom{N}{k}M_{N}^{\tfrac{k}{N}}M_{N}^{\tfrac{N-k}{N}}\\
& \leq C h^N M_{N}
 \end{split}
 \end{equation}
 if $\eta\in\R^n$, $\xi\in V$ and $\lvert\xi\rvert>\sqrt[N]{M_{N}}$. 
 Like H{\"o}rmander \cite{MR1996773} and Albanese-Jornet-Oliaro \cite{MR2595651} we consider 
 \begin{align*}
 \widehat{w_Nf}(\xi)&=\frac{1}{(2\pi)^n}\int\! \hat{w}_N(\eta,\xi)\hat{f}_{N^\prime}(\xi-\eta)\,d\eta \\
 &=\frac{1}{(2\pi)^n}\int\limits_{\lvert\eta\rvert<c\lvert\xi\rvert}\! \hat{w}_N(\eta,\xi)\hat{f}_{N^\prime}(\xi-\eta)\,d\eta  
 +\frac{1}{(2\pi)^n}\int\limits_{\lvert\eta\rvert>c\lvert\xi\rvert}\! \hat{w}_N(\eta,\xi)\hat{f}_{N^\prime}(\xi-\eta)\,d\eta 
 \end{align*}
for some $0<c<1$. The boundedness of the sequence $(f_N)_N$ in $\E^{\prime,\mu}$ implies as before that
\begin{equation*}
\bigl\lvert \hat{f}_N(\xi)\bigr\rvert\leq C \bigl(1+\lvert\xi\rvert\bigr)^\mu.
\end{equation*}
Hence we conclude that
 \begin{equation*}
(2\pi)^n\Bigl\lvert \widehat{w_Nf}(\xi)\Bigr\rvert \leq \bigl\lVert \hat{w}_N(\,.\,,\xi)\bigr\rVert_{L^1}
\sup_{\lvert\xi-\eta\rvert<c\lvert\xi\rvert}\bigl\lvert \hat{f}_{N^\prime}(\eta)\bigr\rvert +
C\int\limits_{\lvert\eta\rvert>c\lvert\xi\rvert}\!
\bigl\lvert \hat{w}_{N}(\eta,\xi)\bigr\rvert\bigl(1+c^{-1}\bigr)^\mu \bigl(1+\lvert\eta\rvert)^\mu\,d\eta
 \end{equation*}
 since $\lvert\eta\rvert\geq c\lvert\xi\rvert$ gives $\lvert\xi +\eta\rvert\leq (1-c^{-1})\lvert\eta\rvert$.

On the other hand there is  a constant $0<c<1$ such that $\eta\in W$ when $\xi\in V$
and $\lvert\xi-\eta\rvert\leq c\lvert\xi\rvert$.  Then $\lvert\eta\rvert\geq (1-c)\lvert\xi\rvert$ and
we can replace the supremum above by
$\sup_{\eta\in W}\lvert\hat{f}_{N^\prime}(\eta)\rvert$.
Furthermore by \eqref{ellipticHelp2}
\begin{equation*}
\begin{split}
\bigl\lVert \hat{w}_N(\;.\;,\xi)\bigr\rVert_{L_1}
&=\int_{\R^n}\bigl\lvert\hat{w}_N(\eta,\xi)\bigr\rvert\,d\eta\\
&\leq Ch^NM_{2N}^{\tfrac{N}{2N}}\int_{\R^n}\Bigl(\lvert\eta\rvert+\sqrt[2N]{M_{2N}}\Bigr)^{-N}\,d\eta\\
&\leq Ch^NM_{N}\int_{\sqrt[2N]{M_{2N}}}^\infty\!\!\! s^{-N^\prime-1}\,ds\\
&\leq Ch^NM_{N}\frac{M_{N}^{-\tfrac{N^\prime}{N}}}{N^\prime}\\
&\leq Ch^NM_{N}^{\tfrac{\mu +n}{N}}.
\end{split}
\end{equation*}
Thence it follows for $\xi\in V$, $\lvert\xi\rvert>\sqrt[N]{M_{N}}$, that
\begin{equation*}
\begin{split}
\Bigl\lvert\widehat{w_Nf}(\xi)\Bigr\rvert 
&\leq C_1 (1-c)^{-N^\prime}\bigl\lVert \hat{w}_N(\,.\,,\xi)\bigr\rVert_{L^1}
\lvert\xi\rvert^{-N^{\prime}}\sup_{\eta\in W}\bigl\lvert\hat{f}_{N^\prime}(\eta)\bigr\rvert\lvert\eta\rvert^{N^{\prime}}\\
&\quad+C_2\bigl(1+c^{-1}\bigr)^{N^{\prime}+\mu}\int_{\lvert\eta\rvert\geq c\lvert\xi\rvert} 
\negthickspace(1+\lvert\eta\rvert)^\mu \lvert \hat{w}_N(\eta,\xi)\rvert\,d\eta\\
&\leq C_1 h^N M_{N}^{\tfrac{n+\mu}{N}} Q^{N^\prime}M_{N^\prime}\lvert\xi\rvert^{-N^{\prime}}
+C_2\tilde{h}^{N} M_{N} \int_{\lvert\eta\rvert>c\lvert\xi\rvert}\negthickspace
\lvert\eta\rvert^{-N^\prime-n}\,d\eta\\
&\leq C h^{N} M_{N^\prime}\lvert\xi\rvert^{-N^{\prime}}
\end{split}
\end{equation*}
where we have also used \eqref{derivclosed}, \eqref{ellipticHelp1} and \eqref{EllipticHelp1a}.
 \end{proof}
 Due to \eqref{WF-M-ell-Auxest2} we can replace $w_N$ in \eqref{WF-M-ell-Auxest 3} with 
 $(w_N^\tau\lvert\xi\rvert^m B)_\sigma$, $\sigma=1,\dotsc,\nu$, and obtain
 \begin{equation}\label{EllipticREGULAR2}
 \bigl\lvert\bigl\langle f, e^{-i\langle\,.\,,\xi\rangle} B(\,.\,,\xi)w_N^\tau\bigr\rangle\bigr\rvert
 \leq Ch^N\lvert \xi\rvert^{\mu +n-N}M_{N-\mu-n}
 \end{equation}
 for $\xi\in V$, $\lvert \xi\rvert>\sqrt[N]{M_{N}}$. 
 
 We consider now the sequence $(v^\tau_N)_N=(u^\tau_{N+m+n+\mu})_N$.
 If $\xi\in V$, $\lvert\xi\rvert\leq \sqrt[N]{M_{N}}$, 
  then by \eqref{EllipticREGULAR0}
 \begin{equation*}
 \lvert\xi\rvert^N\bigl\lvert\hat{v}^\tau_N\rvert\leq C\delta^NM_N.
 \end{equation*}
 
 On the other hand \eqref{WF-M-EllipticRegEq1}, \eqref{EllipticREGULAR1} and \eqref{EllipticREGULAR2} give
 \begin{equation*}
 \begin{split}
 \lvert\xi\rvert^N\bigl\lvert\hat{v}^\tau_N(\xi)\bigr\rvert
 &\leq C_1 h_1^N M_{N+m}\lvert\xi\rvert^{-m}+C_2h_2^N M_{N+\mu +m+n}\lvert\xi\rvert^{-n}\\
 &\leq Ch^N M_N
 \end{split}
 \end{equation*}
 for $\xi\in V$, $\lvert\xi\rvert>\sqrt[N]{M_{N}}$.
 
 Therefore we have shown for all $\tau=1,\dotsc,\nu$ that the bounded sequence 
 $(v^\tau_N)_N\subseteq\E^\prime(\Omega)$ satisfies
 \begin{equation*}
 \sup_{\substack{\xi\in V\\ N\in\N}}
 \frac{\lvert\xi\rvert^N\bigl\lvert v^\tau_N(\xi)\bigr\rvert}{Q^NM_N}<\infty
 \end{equation*}
 for some $Q>0$. Clearly $u^\tau\vert_U\equiv (v^\tau_N)\vert_U$ and hence
 \begin{equation*}
 (x_0,\xi_0)\notin\WF_{\M} u^\tau
 \end{equation*}
 for all $\tau=1,\dotsc,\nu$.
\end{proof}
For elliptic operators, i.e.\ operators $P$ with $\Char P=\emptyset$, the following holds obviously.
\begin{Cor}
If $P$ is an elliptic operator with ultradifferentiable coefficients of class $\{\M\}$  and $u\in\D^\prime$ then
\begin{equation*}
\WF_\M Pu=\WF_\M u.
\end{equation*}
\end{Cor}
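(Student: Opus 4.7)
The plan is to observe that this corollary is an immediate two-line consequence of results already established in the paper, so the proof amounts to assembling them rather than any new argument. Concretely, the ellipticity of $P$ means by definition that $\Char P = \emptyset$, so Theorem \ref{elliptic-regThm} collapses to
\begin{equation*}
\WF_\M u \subseteq \WF_\M Pu \cup \Char P = \WF_\M Pu.
\end{equation*}
This gives one inclusion.

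For the reverse inclusion $\WF_\M Pu \subseteq \WF_\M u$, I would simply invoke Theorem \ref{WF-MProperties}(4), which states that for any partial differential operator $P$ with coefficients of class $\{\M\}$ and any distribution $u$, one has $\WF_\M Pu \subseteq \WF_\M u$. (This is the trivial direction: applying a differential operator with ultradifferentiable coefficients cannot create new microlocal singularities, because differentiation commutes with the localization-plus-Fourier-transform characterization and multiplication by $\E_\M$-functions preserves the relevant estimates.)

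Combining the two inclusions yields the equality $\WF_\M Pu = \WF_\M u$. There is no real obstacle here: both halves are quoted from previously proved statements, and ellipticity enters only through the observation $\Char P = \emptyset$. The corollary could equivalently be stated: for elliptic $P$ with coefficients in $\E_\M$, the ultradifferentiable singularities of $u$ and of $Pu$ coincide, which is the expected microlocal form of the classical fact that elliptic operators do not smooth out singularities.
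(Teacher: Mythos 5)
Your proof is correct and is exactly the paper's intended argument: the paper dismisses this corollary as an obvious consequence of Theorem \ref{elliptic-regThm} (with $\Char P=\emptyset$) combined with Theorem \ref{WF-MProperties}(4), which is precisely the two-inclusion assembly you give. No further comment is needed.
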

\section{Uniqueness Theorems}\label{sec:uniq}
H{\"o}rmander \cite{MR0294849} and Kawai (see \cite{MR0420735}) independently noticed that results 
like Theorem \ref{elliptic-regThm} in the analytic category
can be used to prove Holgrem's Uniqueness Theorem \cite{zbMATH02662678}.
We show here that Theorem \ref{elliptic-regThm} can also be used to give a quasianalytic version
of Holgrem's Uniqueness Theorem. We follow mainly the presentation of H{\"o}rmander \cite{MR1996773}.

First recall H\"ormander \cite[Theorem 6.1.]{MR1275197}:
\begin{Prop}\label{sec:Uniq-prop1}
Let $I\subseteq\R$ be an interval and $x_0\in\partial\supp u$ then 
$(x_0,\pm 1)\in\WF_\M u$ for any quasianalytic regular weight sequence $\M$.
\end{Prop}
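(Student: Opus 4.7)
The plan is to prove this by contradiction, splitting the task via the microlocal reflection property \eqref{microreflprop}: since $\supp\bar u=\supp u$, the condition $(x_0,-1)\in\WF_{\M} u$ is equivalent to $(x_0,1)\in\WF_{\M}\bar u$, so it suffices to establish the statement for the direction $+1$ and then reapply it to $\bar u$ to recover the other case.

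Suppose therefore, for contradiction, that $(x_0,1)\notin\WF_{\M} u$. Because $I$ is one-dimensional, the only open convex cones $\Gamma\subseteq\R$ with $1\cdot\Gamma<0$ lie in $(-\infty,0)$; hence Corollary \ref{WF-M-localdescr1}, after collecting the summands of the local decomposition, yields a connected open neighborhood $U\subseteq I$ of $x_0$, a number $r>0$, and an $\M$-almost analytic function $h$ of slow growth on $U+i(-r,0)$ with
\begin{equation*}
u|_{U}=b_{-}(h).
\end{equation*}
Since $x_0\in\partial\supp u$, any such $U$ contains an open subinterval $J$ on which $u$ vanishes, so that $b_{-}(h)|_J=0$ in $\mathcal{D}'(J)$.

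The remainder of the argument should show that this partial vanishing forces $u\equiv 0$ on a neighborhood of $x_0$, contradicting $x_0\in\supp u$. I would extend $h$ by zero across $J$, producing (once the distributional vanishing of the boundary trace has been reconciled with the $\bar\partial$-decay \eqref{Malmostest} across $J$) an $\M$-almost analytic function $\tilde h$ of slow growth on the connected domain
\begin{equation*}
D=\bigl(U+i(-r,0)\bigr)\cup J\cup\bigl(J+i(0,r)\bigr),
\end{equation*}
identically zero on the open subset $J+i(0,r)$. A quasianalytic identity principle, obtained by combining Dyn'kin's almost-analytic characterization (Theorem \ref{Dynkin1}) with the quasianalyticity of $\E_{\M}$, should then force $\tilde h\equiv 0$ on all of $D$; taking boundary values yields $u\equiv 0$ on $U$, the desired contradiction.

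The hard part will be precisely this last quasianalytic propagation, which is the $\E_{\M}$-analogue of the Schwarz reflection together with analytic continuation that finishes the proof in the real-analytic setting. The extension-by-zero step is essentially formal once the distributional vanishing of the boundary trace is in hand, but turning the two-dimensional vanishing of $\tilde h$ on $J+i(0,r)$ into a statement for the boundary trace on all of $U$ genuinely requires the quasianalyticity of $\M$ on the real line, not only the $\bar\partial$-decay of $\M$-almost analytic functions; it is here that the hypothesis of quasianalyticity on $\M$ is essential, as the analogous statement in the non-quasianalytic setting admits counterexamples via flat cutoffs.
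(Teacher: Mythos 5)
The paper does not actually prove this proposition: it is quoted from H\"ormander \cite[Theorem 6.1]{MR1275197}, so the comparison is with that (Fourier--analytic) argument. Your opening reductions are fine: the reflection property \eqref{microreflprop} together with $\supp\bar u=\supp u$ legitimately reduces everything to the direction $+1$, and in one variable Corollary \ref{WF-M-localdescr1} does collapse to a single representation $u|_U=b_-(h)$ with $h$ $\M$-almost analytic of slow growth in a lower half-strip. But the two steps you defer are exactly where the theorem lives, and the tool you propose for the decisive one does not exist. An ``identity principle'' for $\M$-almost analytic functions on a planar domain is \emph{false}: the weight $h_\M(Q\lvert y\rvert)$ equals a positive constant once $\lvert y\rvert$ is bounded away from $0$ (indeed $h_\M\equiv 1$ on $[1,\infty)$), so the estimate \eqref{Malmostest} constrains $\tilde h$ only in an infinitesimal neighbourhood of $\R$. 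Any smooth function supported in $\{y<-r/2\}$ is $\M$-almost analytic, vanishes on $J+i(0,r)$, and is not zero on $D$. What you would actually need is that flat vanishing of the boundary trace on $J$ propagates along the \emph{real} line to all of $U$ --- but that statement, for boundary values of one-sided almost analytic functions, is essentially the proposition you are trying to prove, so the argument as sketched is circular at its core. (The extension-by-zero step is also not ``essentially formal'': to see that $h$ tends to $0$ in $C^\infty$ on $J$ from below one must split $h$ into an $\E_\M$ piece and a genuinely holomorphic piece via \cite[Theorem 8.4.15]{MR1996773} and invoke the Plemelj jump formula; it is doable, but distributional vanishing of $b_-(h)$ alone does not give it.)

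The structural symptom of the gap is that the Denjoy--Carleman condition \eqref{quasiCond} never enters your argument quantitatively, whereas the statement is false for non-quasianalytic $\M$ (take $u\in\D_\M$ a flat cutoff vanishing on one side of $x_0$); any correct proof must therefore consume \eqref{quasiCond} in an essential analytic estimate, not merely invoke the word ``quasianalytic.'' H\"ormander's proof does this as follows: assuming $(x_0,1)\notin\WF_\M u$, one takes cutoffs $\chi_N$ as in Proposition \ref{WF-M Charakterisierung} supported on the side of $x_0$ where $u$ need not vanish, so that $\widehat{\chi_N u}$ extends by Paley--Wiener to a function holomorphic in a half-plane with polynomial bounds, while decaying like $Q^NM_N\lvert\xi\rvert^{-N}$, i.e.\ like $e^{-\omega_\M(\lvert\xi\rvert/Q)}$, on one real ray. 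A Phragm\'en--Lindel\"of/Levinson-type theorem, combined with the divergence of $\int_1^\infty\omega_\M(t)\,t^{-2}\,dt$ (which is equivalent to \eqref{quasiCond}), then forces this decay to hold in the whole half-plane and hence $\chi_N u=0$ near $x_0$, contradicting $x_0\in\supp u$. If you want to salvage your boundary-value route, the missing lemma you must prove is precisely a one-dimensional log-integral/harmonic-majorant uniqueness theorem for $b_-(h)$; it cannot be replaced by a two-dimensional identity principle for $\tilde h$.
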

As H\"ormander noted in \cite{MR1275197} Proposition \ref{sec:Uniq-prop1} 
immediately generalizes to a result for several variables 
(c.f.\ \cite{MR1996773}[Theorem 8.5.6], see Kim-Chung-Kim \cite{MR1826922} for a similar result):
\begin{Thm}\label{sec:Uniq-thm1}
Let $\M$ be a quasianalytic regular weight sequence, $u\in\D^\prime(\Omega)$, $x_0\in\supp u$ 
and $f:\Omega \rightarrow \R$ a 
function of class $\{\M\}$ with the following properties:
\begin{equation*}
df(x_0)\neq 0,\quad f(x)\leq f(x_0)\quad\text{if }x_0\neq x\in\supp u
\end{equation*}
Then we have
\begin{equation*}
(x_0,\pm df(x_0))\in\WF_\M u.
\end{equation*}
\end{Thm}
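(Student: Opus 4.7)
The strategy is to reduce the statement to the one-dimensional case (Proposition \ref{sec:Uniq-prop1}) via a straightening of $f$ together with a Baire-theoretic choice of a one-dimensional projection.

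Since $df(x_0)\neq 0$, the $\E_\M$-implicit function theorem from Theorem \ref{CMStability}\,(3) produces an $\E_\M$-diffeomorphism of a neighborhood of $x_0$ onto a neighborhood of $0\in\R^n$ sending $f$ to $y_1+f(x_0)$. By the $\E_\M$-invariance of $\WF_\M$ established in Theorem \ref{WF-M-composition}, I may assume $x_0=0$, $f(x)=x_1$, $\supp u\subseteq\{x_1\leq 0\}$ in a neighborhood of $0$, and $0\in\supp u$; the claim then reads $(0,\pm e_1)\in\WF_\M u$. For $\beta\in\D(B_\delta)\subseteq\D(\R^{n-1})$ define the one-dimensional distribution $w_\beta(x_1):=\langle u(x_1,\cdot),\beta\rangle$, whose support lies in $(-\infty,0]$. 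A Baire-category argument will show that for every $\delta>0$ some $\beta\in\D(B_\delta)$ satisfies $0\in\supp w_\beta$: fixing a compact $K\subset B_\delta$ with $0\in K^\circ$, the sets $A_\eta:=\{\beta\in\D(K):w_\beta\equiv 0\text{ on }(-\eta,\eta)\}$ are closed linear subspaces of the Fréchet space $\D(K)$; if no such $\beta$ existed then $\D(K)=\bigcup_{m\geq 1}A_{1/m}$, forcing $A_{\eta_0}=\D(K)$ for some $\eta_0>0$ by Baire, and density of algebraic tensor products would yield $u\equiv 0$ on the open neighborhood $(-\eta_0,\eta_0)\times K^\circ$ of $0$, contradicting $0\in\supp u$. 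Proposition \ref{sec:Uniq-prop1} applied to such a $w_\beta$ then gives $(0,\pm 1)\in\WF_\M w_\beta$.

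To propagate this back to $u$, I would use the pushforward inclusion
\begin{equation*}
\WF_\M w_\beta\subseteq\bigl\{(y,\tau)\in T^\ast\R\setminus\{0\}:\exists\,x'\in\supp\beta,\;((y,x'),(\tau,0))\in\WF_\M u\bigr\},
\end{equation*}
proved via the elementary identity $\widehat{\alpha w_\beta}(\xi_1)=\widehat{(\alpha\otimes\beta)u}(\xi_1,0)$ combined with Proposition \ref{WF-M Charakterisierung}: choosing a closed conic neighborhood $F$ of $e_1$ and a compact neighborhood $K_1$ of $\{0\}\times\supp\beta$ with $(K_1\times F)\cap\WF_\M u=\emptyset$ (possible if $((0,x'),e_1)\notin\WF_\M u$ for all $x'\in\supp\beta$), taking cutoffs $\chi_N\in\D(K_1)$ satisfying the growth estimates of that proposition, and splitting the resulting convolution
\begin{equation*}
\widehat{\alpha w_\beta}(\xi_1)=(2\pi)^{1-n}\!\int\hat\alpha(\eta_1)\hat\beta(\eta')\widehat{\chi_N u}(\xi_1-\eta_1,-\eta')\,d\eta
\end{equation*}
into the region where $(\xi_1-\eta_1,-\eta')\in F$, handled by the $M_N|\xi_1|^{-N}$-decay of $\widehat{\chi_N u}$, and its complement, handled by the rapid decay of $\hat\alpha$ and $\hat\beta$. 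The inclusion then delivers, for each $\delta>0$, a point $x'_\delta\in B_\delta$ with $((0,x'_\delta),\pm e_1)\in\WF_\M u$; letting $\delta\to 0$ and invoking closedness of $\WF_\M u$ (Theorem \ref{WF-MProperties}\,(1)) yields $(0,\pm e_1)\in\WF_\M u$.

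The main technical obstacle is the pushforward inclusion in the quasianalytic case: since $\D_\M=\{0\}$, one cannot pick $\beta$ with an ultradifferentiable Fourier decay $|\hat\beta(\eta')|\leq Ce^{-\omega_\M(c|\eta'|)}$, and the bare Schwartz decay of $\hat\beta$ for a generic $\beta\in\D(B_\delta)$ must suffice. With $C_\kappa:=\sup(1+|\eta'|)^\kappa|\hat\beta(\eta')|$, the bad part of the split integral is of size $C_\kappa|\xi_1|^{-\kappa-1}$ with $\kappa$ of order $N$, so the estimate closes provided $C_N\leq CQ^N M_N$. This can be arranged by choosing the Baire-furnished $\beta$ inside an auxiliary non-quasianalytic Denjoy--Carleman subclass of $\D(B_\delta)$ (the Baire argument is insensitive to this refinement), using the lower bound $M_N\geq N^{cN}$ for some $c>0$, which follows from $\O\subseteq\E_\M$ via Stirling's formula, to absorb $C_N$ into $M_N$.
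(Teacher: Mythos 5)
Your proposal reaches for a \emph{compactly supported} $\beta$ and isolates one with $0 \in \supp w_\beta$ via Baire category, but the route founders on a fundamental incompatibility in the quasianalytic case, which is exactly the new content of the theorem. You identify the pressure point correctly: the convolution split needs $C_N := \sup_{\eta'}(1+|\eta'|)^N|\hat\beta(\eta')| \leq CQ^N M_N$ uniformly in $N$, which is (via Lemma \ref{lem-weight-con}) the same as $|\hat\beta(\eta')| \leq C e^{-\omega_\M(|\eta'|/Q)}$. However, as in the discussion following Proposition \ref{MCharFT}, such decay already forces a compactly supported $\beta$ to lie in $\D_\M$; and $\D_\M=\{0\}$ when $\M$ is quasianalytic, so no nonzero $\beta$ is admissible. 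Choosing $\beta$ from an auxiliary non-quasianalytic class $\E_{\mathcal{N}}$ does not evade this: then $C_N\leq C h^N N_N$, and you would need $\mathcal{N}\preccurlyeq\M$; but $\E_{\mathcal{N}}\subseteq\E_\M$ would make $\E_{\mathcal{N}}$ a subalgebra of a quasianalytic algebra and therefore quasianalytic, contradicting the choice of $\mathcal{N}$. The lower bound $M_N\geq N^{cN}$ is of no help: it bounds $M_N$ from below by the analytic scale but supplies no \emph{upper} bound on $C_N$ for a non-analytic $\beta$. Finally, the parenthetical claim that ``the Baire argument is insensitive to this refinement'' is also incorrect: $\D(K)$ is Fr\'echet, hence Baire, but $\D_{\mathcal{N}}(K)$ for non-quasianalytic $\mathcal{N}$ carries an (LB)/(DFS)-topology, is a countable union of compact sets, and is therefore \emph{not} a Baire space; a fixed Banach step restores the Baire property but loses the totality needed for the tensor-density conclusion $u\equiv 0$.

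The paper proves the statement by contradiction and by dropping compact support. It first replaces $f$ by $f-|x-x_0|^2$ to obtain $f(x)<f(x_0)$ on $\supp u\setminus\{x_0\}$, giving the properness condition $\supp u\cap(\partial U\times I)=\emptyset$ in the straightened coordinates. This allows pairing $u$ against an \emph{entire analytic} function $A(x')$ to form $U_A\in\D'(I)$; the pushforward bound $\WF_\M(U_A)\subseteq\{(x_n,\xi_n):(x',x_n,0,\xi_n)\in\WF_\M u\text{ for some }x'\}$ is then H\"ormander's Theorem $8.5.4'$, valid because $A$ is real-analytic and a fortiori of class $\{\M\}$. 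Assuming $(0,e_n)\notin\WF_\M u$, one shrinks $U\times I$ so that $(x_n,1)\notin\WF_\M U_A$ for \emph{all} $x_n\in I$, and Proposition \ref{sec:Uniq-prop1}, applied to the boundary point of $\supp U_A\cap I$ \emph{wherever} it lies rather than only at $0$, forces $U_A\equiv 0$ on $I$ for every such $A$. No selection of a special $\beta$, and hence no Baire argument, is required; the density of analytic functions then gives $u\equiv 0$ on $U\times I$, contradicting $0\in\supp u$. Reworking your sketch along these lines, using analytic test functions with the contradiction structure, is what closes the gap.
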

\begin{proof}
If we replace $f$ by $f(x)-\lvert x-x_0\rvert^2$ we see that we may assume that 
$f(x)<f(x_0)$ for $x_0\neq x\in\supp u$. 
Furthermore, since $d f(x_0)\neq 0$ we can assume that $x_0=0$ and $f(x)=x_n$.
Next we choose a neighbourhood $U$ of $0$ in $\R^{n-1}$ so that $U\times\{ 0\}\subset\subset \Omega$.
By assumption $\supp u\cap (\bar{U}\times \{0\})=\{0\}$. Hence there is an open interval 
$I\subseteq\R$ with $0\in I$ such that
\begin{equation}\label{Supp}
U\times I\subset\subset\Omega \quad\text{\&}\quad \supp u\cap\bigl(\partial U\times I\bigr)=\emptyset.
\end{equation}
If $A$ is an entire analytic function in the variables $x^\prime=(x_1,\dotsc,x_{n-1})$ then we consider  the distribution $U_A\in\D^\prime(I)$ given by $\langle U_A,\psi\rangle=\langle u_A\otimes\psi\rangle$. Note $U_A$ is well defined due to
\eqref{Supp}.
By \cite[Theorem 8.5.4']{MR1996773} we have that
\begin{equation*}
\WF_\M\bigl(U_A\bigr)\subseteq\bigl\{(x_n,\xi_n)\in I\times\R\!\setminus\!\{0\}\mid
\exists x^\prime\in U:\;
(x^\prime,x_n,0,\xi_n)\in\WF_\M u\bigr\}.
\end{equation*}
Note that $(x^\prime,x_n)$ above must be close to $0$ for $x_n$ small. 

Assume, e.g., that $(0,e_n)\notin\WF_\M u$, $e_n=(0,\dotsc,0,1)$. 
Then $I$ can be chosen so small that $(x,e_n)\notin\WF_\M u$ for $x\in U\times I$.
We conclude that $(x_n,1)\notin\WF_\M U_A$ if $x_n\in I$. Proposition \ref{sec:Uniq-prop1} 
implies that $U_A=0$ on $I$ since $U_A=0$ on $I\cap\{x_n>0\}$.
That means actually that
\begin{equation*}
\Bigl\langle u\vert_{U\times I},A\otimes \varphi\Bigr\rangle =0
\end{equation*}
for all $\varphi\in\D(I)$. 
Since $A$ was chosen arbitrarily from a dense subset of $\E(\R^{n-1})$ it follows that $u=0$
on $U\times I$.
\end{proof}
For the rest of this section $\M$ is going to be a quasianalytic regular weight sequence that satisfies \eqref{mg}.

In order to give Theorem \ref{sec:Uniq-thm1} a more invariant form we need to recall
some facts from \cite{MR1996773}.
\begin{Def}
Let $F$ be a closed subset of a $\CC^2$ manifold $X$. The \emph{exterior normal set} 
$N_e(F)\subseteq T^\ast X\!\setminus\!\{0\}$ is defined as the set of all points $(x_0,\xi_0)$ such that 
$x_0\in F$ and there exists a real valued function $f\in\CC^2(X)$ with $df (x_0)=\xi_0\neq 0$ and
$f(x)\leq f(x_0)$ when $x\in F$.
\end{Def}
In fact, following the remarks in \cite[p.\ 300]{MR1996773} we observe that it would be sufficient for $f$ to be 
defined locally around $x_0$. Furthermore $f$ could then also be chosen real-analytic in a chart neighbourhood 
near $x_0$.
If $g$ is $\CC^1$ near a point $\tilde{x}\in F$ and $dg(\tilde{x})=\tilde{\xi}\neq 0$ then 
$(\tilde{x},\tilde{\xi})\in \overline{ N_e(F)}\subseteq T^\ast X\!\setminus\!\{0\}$. 
It is clear that if $(x_0,\xi_0)\in N_e(F)$ then $x_0\in\partial F$. 
In fact, if $\pi: T^\ast\Omega \rightarrow \Omega$ is the canonical projection then
$\pi(N_e(F))$ is dense in $\partial F$, see \cite[Proposition 8.5.8.]{MR1996773}.
The \emph{interior normal set} $N_i(F)\subseteq T^\ast X\!\setminus\!\{0\}$ consists of all points $(x_0,\xi_0)$
with $(x_0,-\xi_0)\in N_e(F)$. The \emph{normal set} of $F$ is defined as 
$N(F)=N_e(F)\cup N_i(F)\subseteq T^\ast X\!\setminus\!\{0\}$.

In this notation Theorem \ref{sec:Uniq-thm1} takes the following form.
\begin{Thm}\label{sec:Uniq-thm2}
Let  $u\in\D^\prime(\Omega)$. Then
\begin{equation*}
\overline{N(\supp u)}\subseteq \WF_\M u
\end{equation*}
\end{Thm}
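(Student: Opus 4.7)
The plan is to deduce the theorem directly from Theorem \ref{sec:Uniq-thm1} together with the closedness of $\WF_\M u$ in $T^\ast\Omega\!\setminus\!\{0\}$ (Theorem \ref{WF-MProperties}(1)). Because of that closedness, it suffices to establish the uncompleted inclusion $N(\supp u)\subseteq\WF_\M u$. Furthermore, since Theorem \ref{sec:Uniq-thm1} delivers both $(x_0,df(x_0))$ and $(x_0,-df(x_0))$ simultaneously, and since $(x_0,\xi_0)\in N_i(\supp u)$ is by definition equivalent to $(x_0,-\xi_0)\in N_e(\supp u)$, it suffices to handle the exterior normal set.

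Fix $(x_0,\xi_0)\in N_e(\supp u)$ and let $f$ be a $\CC^2$-function defined near $x_0$ with $df(x_0)=\xi_0\neq 0$ and $f(x)\leq f(x_0)$ for $x\in\supp u$ close to $x_0$. Taylor's formula yields a neighbourhood $U_0$ of $x_0$ and a constant $C>0$ such that
\[
\xi_0\cdot(x-x_0)\leq C\lvert x-x_0\rvert^2\qquad\text{for all }x\in\supp u\cap U_0.
\]
Define the polynomial
\[
g(x):=\xi_0\cdot(x-x_0)-(C+1)\lvert x-x_0\rvert^2,
\]
which is real-analytic on $\R^n$ and hence ultradifferentiable of class $\{\M\}$ by \eqref{analyticincl}. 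By construction $dg(x_0)=\xi_0$, $g(x_0)=0$, and for $x\in\supp u\cap U_0$ with $x\neq x_0$,
\[
g(x)\leq -\lvert x-x_0\rvert^2<0=g(x_0).
\]

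Now I choose an open neighbourhood $U\subseteq U_0$ of $x_0$ with $\overline{U}\subset\Omega$ and apply Theorem \ref{sec:Uniq-thm1} to the restriction $u\vert_U\in\D^\prime(U)$, for which $x_0\in\supp(u\vert_U)$, together with $g\vert_U$. The hypotheses are satisfied on $U$, so the conclusion is $(x_0,\pm\xi_0)\in\WF_\M(u\vert_U)$; by the local nature of the ultradifferentiable wavefront set this means $(x_0,\pm\xi_0)\in\WF_\M u$. In particular $(x_0,\xi_0)\in\WF_\M u$, establishing $N_e(\supp u)\subseteq\WF_\M u$. Combined with the preceding reduction this gives $N(\supp u)\subseteq\WF_\M u$, and taking closures concludes the proof.

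The only genuine step in the argument is the upgrade of the $\CC^2$ barrier function $f$ provided by the definition of $N_e$ to a function of class $\{\M\}$, which I expect to be the main obstacle in the sense that it is the only place where one must leave pure bookkeeping. Here it is handled cleanly by replacing $f$ with the explicit real-analytic polynomial $g$, exploiting the inclusion $\O\subseteq\E_\M$; the remainder of the argument is a direct application of the already-established Theorem \ref{sec:Uniq-thm1} and the formal properties of $\WF_\M$ from section \ref{SecWF}.
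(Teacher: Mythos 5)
Your proof is correct and follows essentially the same route as the paper, which presents Theorem \ref{sec:Uniq-thm2} as a direct reformulation of Theorem \ref{sec:Uniq-thm1} after observing (citing H\"ormander's remarks) that the $\CC^2$ barrier in the definition of $N_e$ may be replaced locally by a real-analytic one. Your explicit Taylor-polynomial construction of $g$ just spells out that observation, and the remaining steps (locality of $\WF_\M$, the $\pm$ symmetry handling $N_i$, and closedness of $\WF_\M u$ for the closure) are exactly the intended bookkeeping.
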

Theorem \ref{sec:Uniq-thm2} combined with Theorem \ref{elliptic-regThm} gives
\begin{Thm}
Let $P$ be a partial differential operator with $\E_\M$-coefficients and 
$u\in\D^\prime(\Omega)$ a solution of $Pu=0$.
Then 
\begin{equation*}
\overline{N(\supp u)}\subseteq \Char P,
\end{equation*}
i.e., the principal symbol $p_m$ of $P$ must vanish on $N(\supp u)$.
\end{Thm}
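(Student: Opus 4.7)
The plan is to chain the two previous main results and verify that the closure does not cause trouble. First I would observe that, since $\M$ is a quasianalytic regular weight sequence satisfying \eqref{mg}, both Theorem~\ref{sec:Uniq-thm2} and Theorem~\ref{elliptic-regThm} are available to apply to any $u\in\D^\prime(\Omega)$ and any differential operator $P$ with $\E_\M$-coefficients.

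Next, I would invoke Theorem~\ref{elliptic-regThm} to obtain
\begin{equation*}
\WF_\M u \subseteq \WF_\M Pu \cup \Char P.
\end{equation*}
Since by hypothesis $Pu=0$, we have $\WF_\M Pu=\emptyset$, hence $\WF_\M u\subseteq \Char P$. Combining this with Theorem~\ref{sec:Uniq-thm2}, which gives $\overline{N(\supp u)}\subseteq\WF_\M u$, I would conclude
\begin{equation*}
\overline{N(\supp u)}\subseteq \WF_\M u \subseteq \Char P.
\end{equation*}

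The only remaining point is to check that the inclusion is compatible with taking closure, i.e.\ that $\Char P$ is a closed conic subset of $T^\ast\Omega\setminus\{0\}$. This follows immediately from its definition: $\Char P$ is the set of $(x,\xi)\in T^\ast\Omega\setminus\{0\}$ at which the principal symbol $p_m(x,\xi)$ fails to be invertible, and since the coefficients of $P$ are in $\E_\M\subseteq\E$ and $\det p_m(x,\xi)$ is a continuous function on $T^\ast\Omega\setminus\{0\}$ (polynomial in $\xi$ with continuous coefficients), the locus $\{\det p_m=0\}$ is closed. Therefore the inclusion $N(\supp u)\subseteq \Char P$ already passes to the closure, and the statement that $p_m$ vanishes on $N(\supp u)$ (in the sense that it is non-invertible, reducing to vanishing in the scalar case) is merely a reformulation.

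There is no essential obstacle here, since the hard analytic content has been absorbed into Theorems~\ref{elliptic-regThm} and~\ref{sec:Uniq-thm2}; the proof reduces to a short syllogism plus the elementary closedness of $\Char P$.
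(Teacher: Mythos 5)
Your proof is correct and is exactly the argument the paper intends: chain Theorem~\ref{sec:Uniq-thm2} ($\overline{N(\supp u)}\subseteq\WF_\M u$) with Theorem~\ref{elliptic-regThm} and $\WF_\M Pu=\emptyset$. Note that the closure is already absorbed in Theorem~\ref{sec:Uniq-thm2}, so your final paragraph on the closedness of $\Char P$, while true, is not even needed.
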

In fact, we can now derive the \emph{quasianalytic Holgrem Uniqueness Theorem}.
We recall that a $\mathcal{C}^1$-hypersurface $M$ is characteristic at a point $x$ 
with respect to a partial differential operator $P$,
iff for a defining function $\varphi$ of $M$ near $x$ we have that $(x,d\varphi(x))\in\Char P$.
\begin{Cor}\label{QuasiHolmgren}
Let $P$ a partial differential operator with $\E_\M$-coefficients.
If $X$ is a $\CC^1$-hypersurface in $\Omega$ that is non-characteristic at $x_0$ and 
$u\in\D^\prime(\Omega)$ a solution of $Pu=0$ that vanishes on one side of $X$ near $x_0$ then
$u\equiv 0$ in a full neighbourhood of $x_0$.
\end{Cor}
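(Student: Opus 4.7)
The plan is to argue by contradiction using the theorem just proved, $\overline{N(\supp u)} \subseteq \Char P$ for solutions of $Pu = 0$ with $\E_\M$-coefficients, combined with the non-characteristic hypothesis on $X$ at $x_0$. Suppose that $u$ fails to vanish in every neighborhood of $x_0$; then $x_0 \in \supp u$. Let $\varphi$ be a local $\CC^1$ defining function of $X$ near $x_0$ with $d\varphi(x_0) \neq 0$, arranged so that $u \equiv 0$ on $\{\varphi < 0\}$ in a neighborhood $U$ of $x_0$. Then $\supp u \cap U \subseteq \{\varphi \geq 0\}$ while $\varphi(x_0) = 0$, so the $\CC^1$ function $-\varphi$ attains its local maximum on $\supp u$ at $x_0$.

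By the remark immediately following the definition of $N_e(F)$ concerning $\CC^1$ supporting functions, this yields $(x_0, -d\varphi(x_0)) \in \overline{N_e(\supp u)} \subseteq \overline{N(\supp u)}$, and the preceding theorem therefore forces $(x_0, -d\varphi(x_0)) \in \Char P$. Since the principal symbol $p(x, \xi)$ is matrix-valued with entries homogeneous of degree $m$ in $\xi$, one has $p(x_0, -d\varphi(x_0)) = (-1)^m p(x_0, d\varphi(x_0))$, so invertibility of the symbol matrix is equivalent at $\pm d\varphi(x_0)$. Hence $(x_0, d\varphi(x_0)) \in \Char P$, contradicting the non-characteristic hypothesis, and we conclude that $u \equiv 0$ in a full neighborhood of $x_0$.

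The main delicate point is extracting a normal at $x_0$ from the merely $\CC^1$ defining function $\varphi$, since $N_e(F)$ is defined using $\CC^2$ functions; this is precisely why the preceding theorem is stated with the closure $\overline{N(\supp u)}$ and why the author included the remark on $\CC^1$ supports. If one preferred not to rely on the closure remark, one could instead perturb $-\varphi$ by a smooth bump vanishing to high order at $x_0$ in order to approximate by strictly peaking $\CC^2$ supporting functions, producing actual elements of $N_e(\supp u)$ converging to $(x_0, -d\varphi(x_0))$ and yielding the same contradiction.
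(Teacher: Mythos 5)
Your argument is correct and is exactly the deduction the paper intends (the corollary is stated without proof as an immediate consequence of the preceding theorem): a $\CC^1$ defining function supporting $\supp u$ at $x_0$ places $(x_0,\mp d\varphi(x_0))$ in $\overline{N_e(\supp u)}\cup\overline{N_i(\supp u)}=\overline{N(\supp u)}\subseteq\Char P$, contradicting non-characteristicity. Note you could even skip the homogeneity step by using $N_i$ directly, since $(x_0,-\xi_0)\in N_e$ gives $(x_0,\xi_0)\in N_i\subseteq N(\supp u)$.
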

In fact, (c.f.\ Zachmanoglou \cite{MR0240442}) it is possible to reformulate Corollary \ref{QuasiHolmgren}
\begin{Cor}\label{QuasiHolmgren2}
Let $P$ a differential operator with coefficients in $\E_\M(\Omega)$.
Furthermore let $F\in\E_\M(\R^n)$ be a real-valued function of the form
\begin{equation*}
F(x)=f\bigl(x^\prime\bigr)-x_n,\qquad x^\prime =(x_1,\dotsc,x_{n-1})
\end{equation*}
where $f\in\E_\M(\R^{n-1})$ and suppose that the level hypersurfaces of $F$ are nowhere characteristic with
respect to $P$ in $\Omega$. Set also $\Omega_c=\{x\in\Omega\mid F(x)<c\}$ for $c\in\R$.
If $u\in\D^\prime(\Omega)$ is a solution of $P(x,D)u=0$ and there is $c\in\R$ such that
$\Omega_c\cap\supp u$ is relatively compact in $\Omega$, then $u=0$ in $\Omega_c$.
\end{Cor}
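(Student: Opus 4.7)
The plan is to argue by contradiction: assume $\supp u \cap \Omega_c \neq \emptyset$ and produce a point that must simultaneously lie in $\WF_\M u$ and in $\Char P$, contradicting the noncharacteristic hypothesis on the level surfaces of $F$.

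The first step is to locate a minimizer of $F$ on $\supp u \cap \overline{\Omega_c}$. Since $\Omega_c \cap \supp u$ is relatively compact in $\Omega$, its closure $K$ in $\Omega$ is compact, and the continuous function $F$ attains its minimum $c_0 := F(x_0)$ at some $x_0 \in K$. Before going further I must verify $c_0 < c$; if instead $c_0 = c$ held, then $K \subseteq \{F \geq c\}$ combined with $\Omega_c \cap \supp u \subseteq K \cap \{F < c\}$ would force $\Omega_c \cap \supp u = \emptyset$, contradicting the working assumption. Hence $x_0 \in \Omega_c$ and $x_0 \in \supp u$.

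Next, for $x \in \supp u$ sufficiently close to $x_0$, continuity of $F$ gives $F(x) < c$, hence $x \in \Omega_c \cap \supp u \subseteq K$, and therefore $F(x) \geq c_0 = F(x_0)$. Equivalently, the $\E_\M$-function $g := -F$ satisfies $g(x) \leq g(x_0)$ for $x \in \supp u$ near $x_0$, and $dg(x_0) = -dF(x_0) \neq 0$ since the level set $\{F = c_0\}$ is noncharacteristic, and in particular a smooth hypersurface, at $x_0$. By the definition of the exterior normal set, this places $(x_0, -dF(x_0))$ in $N_e(\supp u)$.

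Invoking Theorem \ref{sec:Uniq-thm2} then yields $(x_0, -dF(x_0)) \in \WF_\M u$, and since $Pu = 0$, Theorem \ref{elliptic-regThm} gives $\WF_\M u \subseteq \Char P$, so $(x_0, -dF(x_0)) \in \Char P$. Because the principal symbol of $P$ is homogeneous in $\xi$, the characteristic set is invariant under $\xi \mapsto -\xi$, whence $(x_0, dF(x_0)) \in \Char P$, contradicting the hypothesis that the level hypersurfaces of $F$ are nowhere characteristic in $\Omega$. The only delicate point in the argument is ensuring that the minimizer lies in the open set $\Omega_c$ rather than on its topological boundary, which is precisely what the strict inequality $F < c$ defining $\Omega_c$ buys us; everything else reduces to a direct application of Theorems \ref{elliptic-regThm} and \ref{sec:Uniq-thm2}.
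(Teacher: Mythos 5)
Your proof is correct and follows essentially the same route as the paper: both arguments use the relative compactness of $\supp u\cap\Omega_c$ to locate the extremal level hypersurface of $F$ in contact with the support and then derive a contradiction from the noncharacteristic hypothesis at the contact point. The only difference is that the paper invokes Corollary \ref{QuasiHolmgren} there (concluding that $u$ vanishes in a full neighbourhood of the contact point, contradicting its membership in $\supp u$), whereas you inline that corollary's derivation by applying Theorems \ref{sec:Uniq-thm2} and \ref{elliptic-regThm} directly to place $(x_0,\pm dF(x_0))$ in $\Char P$.
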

\begin{proof}
We set for $c\in\R$
\begin{equation*}
\omega_c=\bigl\{x\in\Omega\mid F(x)=c\bigr\}.
\end{equation*}
Note that for each $c\in\R$ the set $\omega_c$ is not relatively compact in $\Omega$. 
Therefore also $\Omega_c$ is not relatively compact in $\Omega$ for any $c$ since $\partial\Omega_c=\omega_c$.

By assumption there is a $c\in\R$ such that $K=\supp u\cap \overline{\Omega}_c$ is compact in $\Omega$.
In particular, $K$ is bounded in $\Omega$. Hence there has to be $\tilde{c}<c$ such that 
\begin{equation*}
K\subseteq\big\{x\in\Omega\mid \tilde{c}\leq F(x)\leq c\bigr\}.
\end{equation*}
Let $c_1<c$ be the greatest real number such that the inclusion above holds for $\tilde{c}=c_1$.
Since $K$ is compact there is a point $p\in\partial K$ such that $F(p)=c_1$.
It follows that $p\in\partial\supp u\cap\omega_{c_1}$. 
Thus we can apply Corollary \ref{QuasiHolmgren} because $\omega_{c_1}$ is nowhere characteristic for $P$.
Hence $u$ vanishes in a full neighbourhood of $p$. This contradicts the choice of $c_1$.
We conclude that $u$ has to vanish on $\Omega_c$.
\end{proof}
Note that in \cite{MR0404822} H\"ormander used the analytic version of Corollary \ref{QuasiHolmgren2} 
to prove Holgrem's Uniqueness Theorem.
\begin{Rem}
We have formulated our results for scalar operators on open sets of $\R^n$ but they remain of course valid on
ultradifferentiable manifolds of class $\{\M\}$. Actually, all the conclusions in this section hold even for
determined systems of operators and vector-valued distributions.
Indeed, we have only to verify that Theorem \ref{sec:Uniq-thm1} holds also for distributions with values in $\C^\nu$, but this is trivial:
If $f(x)\leq f(x_0)$ for $x\in\supp u$ then $f(x)\leq f(x_0)$ for all $x\in \supp u_j$ and any 
$1\leq j\leq n$, since $\supp u=\bigcup_{j=1}^\nu\supp u_j$. 
Hence Theorem \ref{sec:Uniq-thm1} implies
\begin{equation*}
(x_0,\pm df(x_0))\in\bigcap_{j=1}^\nu \WF_\M u_j\subseteq\WF_\M u.
\end{equation*}
\end{Rem}
Following an idea of Bony (\cite{MR0241805,MR0474426}) it is possible to generalize the results above.
For the formulation we need some additional notation. 
Consider a smooth real valued function $p$ on $T^\ast \Omega$. The \emph{Hamiltonian vector field}
 $H_p$ of $p$ is defined by 
\begin{equation*}
H_p=\sum_{j=1}^n\biggl(\frac{\partial p}{\partial \xi_j}\frac{\partial}{\partial x_j}-\frac{\partial p}{\partial x_j}
\frac{\partial}{\partial \xi_j}\biggr).
\end{equation*}
An integral curve of $H_p$, i.e.\ a solution of the Hamilton-Jacobi equations
\begin{align*}
\frac{d x_j}{dt}&=\frac{\partial p}{\partial \xi_j}(x,\xi),\\
\frac{d\xi_j}{dt}&=-\frac{\partial p}{\partial x_j}(x,\xi),
\end{align*}
$j=1,\dotsc,n$, is called a \emph{bicharacteristic} if $p$ vanishes on it. 
If $q$ is another smooth real valued function on $T^\ast\Omega$ then the \emph{Poisson bracket} is defined by 
$\{p,q\}:=H_p(q)$ or in coordinates
\begin{equation*}
\{p,q\}=\sum_{j=1}^n\biggl(\frac{\partial p}{\partial \xi_j}\frac{\partial q}{\partial x_j}
-\frac{\partial p}{\partial x_j}\frac{\partial q}{\partial \xi_j}\biggr).
\end{equation*}
See \cite{MR1269107} or \cite{MR1996773} for more details.

We continue by recalling a result of Sj\"ostrand \cite{MR699623} (see also \cite{MR1996773}).
\begin{Thm}\label{sec:Uniq-thm3}
Let $F$ be a closed subset of $\Omega$ and suppose that $p\in\E(T^\ast\Omega\setminus\{0\})$ 
is real valued and vanishes on $N_e(F)$.
If $(x_0,\xi_0)\in N_e(F)$ then the bicharacteristic $t\mapsto (x(t),\xi(t))$ with $(x(0),\xi(0))=(x_0,\xi_0)$ 
stays for $\lvert t\rvert$ small in $N_e(F)$.
\end{Thm}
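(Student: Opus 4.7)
The plan is to follow Sj\"ostrand's strategy of transporting the defining function via the Hamilton--Jacobi flow of $p$. Start with a defining function $f$ for $(x_0, \xi_0) \in N_e(F)$. Replacing $f(x)$ by $f(x) - c|x - x_0|^2$ with a large constant $c > 0$, I may assume the strict quadratic bound $f(x) \leq f(x_0) - c|x - x_0|^2$ on $F$ near $x_0$. If $dp(x_0, \xi_0) = 0$ the bicharacteristic is stationary and the claim is trivial; otherwise, since $(x_0, \xi_0) \in N_e(F) \subseteq \{p = 0\}$ by hypothesis and $p$ is conserved along bicharacteristics, $p(x(t), \xi(t)) \equiv 0$.

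Next I transport the defining function. Set $\Lambda_0 = \{(x, df(x))\}$, a Lagrangian submanifold, and let $\Lambda_t$ be its image under the time-$t$ Hamiltonian flow of $p$. For small $|t|$ this is still a Lagrangian graph $\{(x, df_t(x))\}$ near $x(t)$, so one obtains a $C^2$ family $\{f_t\}$ with $df_t(x(t)) = \xi(t)$ satisfying the Hamilton--Jacobi equation $\partial_t f_t + p(\cdot, df_t) = 0$. When $p$ is homogeneous of degree $m$ in $\xi$, Euler's identity $\xi \cdot \partial_\xi p = m p$ combined with $p \equiv 0$ along the bicharacteristic yields $f_t(x(t)) = f(x_0)$ for all small $t$.

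The main task is to verify that $f_t(x) \leq f_t(x(t))$ for $x \in F$ near $x(t)$, possibly after a quadratic correction of $f_t$ that preserves both $df_t(x(t)) = \xi(t)$ and the required majorisation. Set $\Psi(x, t) := f_t(x) - f_t(x(t))$, so $\partial_t \Psi(x, t) = -p(x, df_t(x))$ and $\Psi(x, 0) \leq -c|x - x_0|^2$ on $F$ near $x_0$. Suppose for contradiction that $\Psi$ attains a positive value on the compact set $K_{r,t_0} = \{(x,t) : x \in F,\, |x - x_0| \leq r,\, |t| \leq t_0\}$ and let $(x_\star, t_\star)$ be a maximizer. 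For $r, t_0$ sufficiently small the strict quadratic bound at $t = 0$ excludes $t_\star = 0$, and a similar argument excludes the lateral boundary $|x_\star - x_0| = r$. At the remaining interior maximizer $x_\star$ is a local spatial maximum of $f_{t_\star}|_F$, so $(x_\star, df_{t_\star}(x_\star)) \in N_e(F)$ (note $df_{t_\star}(x_\star) \neq 0$ by continuity from $\xi(t_\star) \neq 0$); the hypothesis on $p$ then forces $p(x_\star, df_{t_\star}(x_\star)) = 0$, whence $\partial_t \Psi(x_\star, t_\star) = 0$. A quantitative refinement, propagating the strict quadratic bound through the flow, then yields the contradiction.

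The hardest step is this quantitative refinement. A naive first-order Taylor expansion bounds $\Psi$ only by $K t^2$ for some $K > 0$, which is not yet $\leq 0$. Extracting a negative contribution requires combining the hypothesis $p|_{N_e(F)} = 0$ with the homogeneity of $p$, and in practice one modifies the candidate defining function by a $t$-dependent quadratic term $-K|x - x(t)|^2$, with $K$ chosen to dominate the error term while preserving $d\tilde f_t(x(t)) = \xi(t)$. This is the substance of Sj\"ostrand's original argument, as exposed in H\"ormander's treatment.
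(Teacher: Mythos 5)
The paper does not prove Theorem \ref{sec:Uniq-thm3} at all: it is explicitly recalled as a result of Sj\"ostrand \cite{MR699623}, with H\"ormander's book cited for the exposition (it is Theorem 8.5.9 there). Your outline does capture the correct strategy of that proof --- transport the defining function by the Hamilton--Jacobi flow, and run a maximum-principle argument on $\Psi(x,t)=f_t(x)-f_t(x(t))$ over $F$ --- so the architecture is right.

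However, as a proof the proposal has a genuine gap exactly where you say the ``hardest step'' is. At an interior maximiser $(x_\star,t_\star)$ with $\Psi(x_\star,t_\star)>0$ you obtain $(x_\star,df_{t_\star}(x_\star))\in N_e(F)$, hence $p(x_\star,df_{t_\star}(x_\star))=0$ and $\partial_t\Psi(x_\star,t_\star)=0$; but a vanishing time-derivative at a maximum is not a contradiction with $\Psi>0$ there --- it is consistent with it. The whole content of Sj\"ostrand's argument is the quantitative device that converts ``$p$ vanishes at normal points'' into a differential inequality forcing $\Psi\leq 0$: one must replace $f_t$ by $f_t-K\lvert x-x(t)\rvert^2$ (or work with $\Psi+c\lvert x-x_0\rvert^2$ and a Gronwall-type estimate), choose $K$ to absorb the $O(\lvert x-x_\star\rvert^2)$ error in the Taylor expansion of $p(x,df_t(x))$ around the normal point, and verify that this correction is preserved by the flow for $\lvert t\rvert$ small. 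You announce this modification but never construct it or carry out the estimate, deferring to ``the substance of Sj\"ostrand's original argument''; that is precisely the step that cannot be omitted. Two smaller issues: the identity $\partial_t\Psi(x,t)=-p(x,df_t(x))$ silently discards the term $\tfrac{d}{dt}f_t(x(t))$, which you only control by invoking homogeneity of $p$ in $\xi$ --- not a hypothesis of the theorem (harmless for the difference $\Psi$, but then the displayed formula for $\partial_t\Psi$ is wrong as written); and the case where the maximiser sits on the time boundary $t_\star=\pm t_0$ gives only a one-sided derivative inequality and needs separate treatment (e.g.\ by taking the first time positivity occurs).
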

The analogous statement is of course also true for $N_i(F)$ replacing $N_e(F)$. It follows
\begin{Cor}[Bony]
Let $F$ be a closed subset of $\Omega$ and set
\begin{equation*}
\mathcal{N}_F:=\bigl\{p\in\E(T^\ast\Omega\!\setminus\!\{0\})\mid p\equiv 0\text{ on } N (F)\bigr\}.
\end{equation*}
Then $\mathcal{N}_F$ is an ideal in $\E(T^\ast\Omega\!\setminus\!\{0\})$ that is closed under Poisson brackets.
\end{Cor}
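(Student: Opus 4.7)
The ideal property is essentially formal and should be disposed of first. If $p_1, p_2 \in \mathcal{N}_F$ and $q \in \E(T^\ast\Omega\!\setminus\!\{0\})$, then at any $(x,\xi) \in N(F)$ we have $(p_1+p_2)(x,\xi) = 0$ and $(qp_1)(x,\xi) = q(x,\xi) \cdot 0 = 0$, so $\mathcal{N}_F$ is closed under addition and under multiplication by arbitrary smooth functions. This uses nothing beyond the pointwise vanishing definition.

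The Poisson bracket closure is where Theorem \ref{sec:Uniq-thm3} does the real work. Given $p, q \in \mathcal{N}_F$, I want to show $\{p,q\} = H_p(q)$ vanishes on $N(F) = N_e(F) \cup N_i(F)$. Fix $(x_0,\xi_0) \in N_e(F)$. Since $p \equiv 0$ on $N_e(F)$ in particular $p(x_0,\xi_0)=0$, so $p$ is a real-valued smooth function on $T^\ast\Omega\!\setminus\!\{0\}$ vanishing on $N_e(F)$ and vanishing at the point of interest; therefore Theorem \ref{sec:Uniq-thm3} applies, yielding the bicharacteristic $t \mapsto (x(t),\xi(t))$ of $H_p$ with $(x(0),\xi(0)) = (x_0,\xi_0)$ which remains inside $N_e(F)$ for $\lvert t \rvert$ small. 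Since $q \equiv 0$ on $N_e(F)$, the function $t \mapsto q(x(t),\xi(t))$ is identically zero for small $\lvert t \rvert$. Differentiating at $t=0$,
\begin{equation*}
\{p,q\}(x_0,\xi_0) = H_p(q)(x_0,\xi_0) = \frac{d}{dt}\bigg|_{t=0} q(x(t),\xi(t)) = 0.
\end{equation*}
This establishes $\{p,q\} \equiv 0$ on $N_e(F)$. The same argument applied to $N_i(F)$, using the analogous version of Theorem \ref{sec:Uniq-thm3} noted right after its statement, gives $\{p,q\} \equiv 0$ on $N_i(F)$, hence on all of $N(F)$.

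The main (and really the only) obstacle is invoking Theorem \ref{sec:Uniq-thm3} correctly: one must check its hypothesis that $p$ is real-valued and vanishes on $N_e(F)$, which is immediate from $p \in \mathcal{N}_F$ (restriction of a real ideal element to $N_e(F) \subseteq N(F)$). Once that is in place the derivation is a one-line differentiation, so no further estimates or computations are required.
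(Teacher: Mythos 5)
Your proof is correct and is exactly the derivation the paper intends: the paper states the corollary with no written proof ("It follows" after Theorem \ref{sec:Uniq-thm3}), and your argument — the ideal property by pointwise vanishing, and Poisson-bracket closure by differentiating $q$ along the bicharacteristic of $p$ kept inside $N_e(F)$ (resp.\ $N_i(F)$) by Theorem \ref{sec:Uniq-thm3} — is the standard way to fill that in.
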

We obtain the quasianalytic version of a result of Bony \cite{MR0241805,MR0474426}.
\begin{Thm}
Let $P$ a differential operator with $\E_\M$-coefficients on $\Omega$ 
and $\Pi$ the Poisson algebra that is generated by all functions $f\in\E(T^\ast\Omega\!\setminus\!\{0\})$ 
that vanish on $\Char P$.

If $u\in\D^\prime(\Omega)$ is a solution of the homogeneous equation $Pu=0$ then all functions in $\Pi$ 
have to vanish on $N(\supp u)$.
\end{Thm}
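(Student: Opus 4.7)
The plan is to combine the three main ingredients already established in this section: the elliptic regularity theorem, the quasianalytic normal set bound for $\WF_\M u$, and Bony's corollary on the Poisson-algebra structure of $\mathcal{N}_F$.

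First, I would apply Theorem \ref{elliptic-regThm} to the solution $u$ of the homogeneous equation $Pu=0$. Since $\WF_\M(Pu)=\emptyset$, the inclusion $\WF_\M u\subseteq\WF_\M(Pu)\cup\Char P$ collapses to
\begin{equation*}
\WF_\M u\subseteq\Char P.
\end{equation*}
Next, by Theorem \ref{sec:Uniq-thm2}, which encodes the quasianalytic Holmgren-type reflection of the support of $u$ in its ultradifferentiable wavefront set, we have $\overline{N(\supp u)}\subseteq\WF_\M u$. Chaining these two inclusions gives
\begin{equation*}
N(\supp u)\subseteq\overline{N(\supp u)}\subseteq\WF_\M u\subseteq\Char P.
\end{equation*}

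Now I would use this inclusion together with Bony's corollary applied to $F=\supp u$. By that corollary, the set
\begin{equation*}
\mathcal{N}_{\supp u}=\bigl\{f\in\E(T^\ast\Omega\!\setminus\!\{0\})\mid f\equiv 0\text{ on }N(\supp u)\bigr\}
\end{equation*}
is an ideal of $\E(T^\ast\Omega\!\setminus\!\{0\})$ that is closed under Poisson brackets, hence in particular a Poisson subalgebra. Because $N(\supp u)\subseteq\Char P$, every function $f\in\E(T^\ast\Omega\!\setminus\!\{0\})$ that vanishes on $\Char P$ \emph{a fortiori} vanishes on $N(\supp u)$, so the generators of $\Pi$ all lie in $\mathcal{N}_{\supp u}$.

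The final step is then purely formal: $\Pi$ is by definition the smallest Poisson subalgebra of $\E(T^\ast\Omega\!\setminus\!\{0\})$ containing the functions vanishing on $\Char P$, and $\mathcal{N}_{\supp u}$ is such a Poisson subalgebra by Bony's corollary, hence $\Pi\subseteq\mathcal{N}_{\supp u}$. This is exactly the assertion that every $f\in\Pi$ vanishes on $N(\supp u)$. There is no genuine obstacle here once the three previous results are in place; the only thing one has to be slightly careful about is making explicit that ``Poisson algebra generated by'' means closed under sums, products \emph{and} Poisson brackets, which matches precisely the structural properties of $\mathcal{N}_{\supp u}$ provided by Bony's corollary (whose proof in turn rests on Sjöstrand's Theorem \ref{sec:Uniq-thm3}).
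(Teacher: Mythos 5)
Your proof is correct and follows exactly the route the paper intends: the paper leaves this theorem without an explicit proof precisely because it is the formal assembly of the immediately preceding results, namely $\overline{N(\supp u)}\subseteq\WF_\M u\subseteq\Char P$ (Theorem \ref{sec:Uniq-thm2} plus Theorem \ref{elliptic-regThm}) followed by the Bony corollary that $\mathcal{N}_{\supp u}$ is a Poisson-closed ideal containing the generators of $\Pi$. Your remark that ``generated as a Poisson algebra'' is matched by the structural properties of $\mathcal{N}_{\supp u}$ is the only point needing care, and you handle it correctly.
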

\begin{Cor}
If the elements of $\Pi$ have no common zeros and $u$ vanishes in a neighbourhood of a point 
$p_0\in\Omega$ then $u$ must vanish in the connected component of $\Omega$ that contains $p_0$.
\end{Cor}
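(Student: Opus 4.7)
The plan is to read the hypothesis ``no common zero'' as forcing the normal set of $\supp u$ to be empty, and then to deduce from the density of $\pi(N_e(\supp u))$ in $\partial\supp u$ that $\supp u$ is clopen in $\Omega$; the corollary is then immediate from the connectedness of the component containing $p_0$.

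More concretely, first I would set $F=\supp u\subseteq\Omega$ and let $W$ be the connected component of $\Omega$ that contains $p_0$. Since by assumption $u$ vanishes in a neighbourhood of $p_0$, we have $p_0\notin F$, so $F\cap W\neq W$. The goal is therefore reduced to showing that $F\cap W$ is also closed and open in $W$, for then connectedness of $W$ together with $p_0\in W\setminus F$ forces $F\cap W=\emptyset$, which is exactly the conclusion $u\equiv 0$ on $W$.

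Next I would invoke the preceding theorem: every $p\in\Pi$ vanishes on $N(F)=N_e(F)\cup N_i(F)$. By the standing hypothesis, the elements of $\Pi$ have no common zero in $T^\ast\Omega\setminus\{0\}$, and therefore no point of $T^\ast\Omega\setminus\{0\}$ can lie in $N(F)$. Hence $N_e(F)=\emptyset$. Recalling the density result stated just before Theorem \ref{sec:Uniq-thm2} (namely that $\pi(N_e(F))$ is dense in $\partial F$, \cite[Proposition 8.5.8]{MR1996773}), we conclude $\partial F=\emptyset$, so $F$ is both open and closed in $\Omega$. Restricting to $W$ gives the clopen dichotomy needed above.

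There is no real obstacle here: the entire content has already been packed into the preceding theorem and into Proposition 8.5.8 of \cite{MR1996773}. The only point worth checking carefully is that ``boundary'' is taken relative to $\Omega$ (so that $\partial F=\emptyset$ in $\Omega$ truly makes $F$ clopen in $\Omega$), which is the convention under which the cited density statement is formulated. Everything else is topological bookkeeping.
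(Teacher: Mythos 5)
Your argument is correct and is precisely the intended one (the paper states this corollary without proof, leaving exactly this deduction implicit): the preceding theorem forces $N(\supp u)=\emptyset$ under the no-common-zeros hypothesis, the density of $\pi(N_e(\supp u))$ in $\partial\supp u$ then gives $\partial\supp u=\emptyset$ relative to $\Omega$, and connectedness finishes the proof. Your remark about taking the boundary relative to $\Omega$ is the right point to check, and it is handled correctly.
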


We continue by taking a closer look at Theorem \ref{sec:Uniq-thm3}. 
Let $\pi: T^\ast\Omega \rightarrow \Omega$ be the canonical projection and 
$(x_0,\xi_0)\in T^\ast\Omega\!\setminus\!\{0\}$.
If $q$ is a smooth function on  $T^\ast\Omega\!\setminus\!\{0\}$ that vanishes on $N(F)$, 
$F\subseteq\Omega$ closed, and $\lambda(t)$ the bicharacteristic through $(x_0,\xi_0)$
then we conclude that the bicharacteristic curve $\gamma(t)= \pi\circ\lambda$ must stay in $\partial F$ for
small $t$ in view of  the remarks before Theorem \ref{sec:Uniq-thm2}.

Now suppose that $Q$ is a real vector field on $\Omega$ and $q$ its symbol.
If we denote by $\gamma$ the integral curve of $Q$ through $x_0$ and by $\lambda$ the bicharacteristic
of $q$ through $(x_0,\xi_0)$ where $(x_0,\xi_0)$ then it is trivial that $\gamma=\pi\circ \lambda$.

\begin{Def}\label{sec:Uniq-def}
We say that a partial differential operator $P$ on $\Omega$ with $\E_\M$-coefficients is $\M$-admissible iff
there are ultradifferentiable real-valued vector fields $Q_1,\dotsc,Q_d$ with symbols $q_1,\dotsc,q_d$ such that
each $q_j$ vanishes on $\Char P$.
\end{Def}
Following the approach of Sj\"ostrand \cite{MR699623} we can generalize results of Zachmanoglou 
\cite{MR0299925} (c.f.\ also Bony \cite{MR0474426}) to the quasianalytic setting.
\begin{Prop}\label{sec:Uniq-prop2}
Let $P$ be an $\M$-admissible operator. 
If $\mathcal{L}=\mathcal{L}(Q_1,\dotsc,Q_d)$ is the Lie algebra generated by the 
vector fields $Q_j$, $j=1,\dotsc,d$, 
$\varphi\in\CC^1(\Omega,\R)$ near a point $x_0\in\Omega$ such that $(x_0,\varphi^\prime(x_0))\in\Char P$
and $u\in\D^\prime(\Omega)$ a solution of $Pu=0$ such that near 
$x_0$ we have $x_0\in\supp u\subseteq\{\varphi\geq 0\}$.
Then each $Q\in\mathcal{L}$ is tangent to $\{\varphi=0\}$ at $x_0$ 
and the local Nagano leaf $\gamma_{x_0}(\mathcal{L})$ is contained in $\supp u$.
\end{Prop}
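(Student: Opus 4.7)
The plan is to combine the elliptic regularity theorem of this paper with Sj\"ostrand's bicharacteristic invariance result (Theorem \ref{sec:Uniq-thm3}) and iterate, following the spirit of H\"ormander's and Zachmanoglou's arguments in the analytic setting. First, after subtracting a constant we may assume $\varphi(x_0)=0$, so the hypothesis $\supp u\subseteq\{\varphi\geq 0\}$ near $x_0$ says exactly that the function $f:=-\varphi$ attains a local maximum on $\supp u$ at $x_0$. Hence $(x_0,-\varphi'(x_0))\in N_e(\supp u)$.

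Next, I would assemble the chain
\begin{equation*}
N_e(\supp u)\;\subseteq\;\overline{N(\supp u)}\;\subseteq\;\WF_\M u\;\subseteq\;\WF_\M Pu\cup\Char P\;=\;\Char P,
\end{equation*}
where the second inclusion is Theorem \ref{sec:Uniq-thm2}, the third is the elliptic regularity Theorem \ref{elliptic-regThm}, and the last equality uses $Pu=0$. Since $P$ is $\M$-admissible, each symbol $q_j$ vanishes on $\Char P$ and therefore on $N_e(\supp u)$.

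The core of the argument is then an iterative application of Theorem \ref{sec:Uniq-thm3}. Since $(x_0,-\varphi'(x_0))\in N_e(\supp u)$ and $q_j$ vanishes on $N_e(\supp u)$, the bicharacteristic of $q_j$ through this point stays in $N_e(\supp u)$ for short times; its projection to $\Omega$ is the integral curve of $Q_j$ through $x_0$, which consequently lies in $\supp u$. Any point $x_1$ on this curve sits below some $(x_1,\xi_1)\in N_e(\supp u)$, and Theorem \ref{sec:Uniq-thm3} can be applied again to flow along any $Q_{j'}$ starting from $(x_1,\xi_1)$. Iterating, every point reachable from $x_0$ by a finite concatenation of $Q_{j_1},\dotsc,Q_{j_k}$-flows lies in $\supp u$. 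By the quasianalytic Nagano theorem (Theorem \ref{NaganoThm}) together with the $\E_\M$-version of Sussmann's orbit theorem, this reachable set coincides with a neighbourhood of $x_0$ in the local Nagano leaf $\gamma_{x_0}(\mathcal{L})$, so $\gamma_{x_0}(\mathcal{L})\subseteq\supp u$.

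Finally, the tangency claim is an immediate consequence: since $\gamma_{x_0}(\mathcal{L})\subseteq\{\varphi\geq 0\}$ near $x_0$ and $\varphi(x_0)=0$, the restriction $\varphi\vert_{\gamma_{x_0}(\mathcal{L})}$ attains a local minimum at $x_0$, so $\varphi'(x_0)$ annihilates $T_{x_0}\gamma_{x_0}(\mathcal{L})=\mathcal{L}(x_0)$; equivalently, $Q\varphi(x_0)=0$ for every $Q\in\mathcal{L}$. The main technical delicacy is the iteration step: one must justify that the set reachable from $x_0$ by flows of the $Q_j$ is genuinely the full Nagano leaf and not some proper subset, and this is precisely where the regularity of $\M$ and the quasianalytic version of Nagano's theorem are essential.
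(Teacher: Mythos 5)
Your proof is correct and follows essentially the same route as the paper: the chain $N_e(\supp u)\subseteq\overline{N(\supp u)}\subseteq\WF_\M u\subseteq\Char P$, Sj\"ostrand's theorem applied iteratively to the bicharacteristics of the symbols $q_j$, and the observation that the local Nagano leaf is built from concatenated flows of the $Q_j$. Your derivation of the tangency of every $Q\in\mathcal{L}$ from the fact that $\varphi$ attains a minimum on the leaf at $x_0$ is in fact slightly more careful than the paper's argument, which verifies tangency only for the generators $Q_j$ and asserts it for the brackets.
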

\begin{proof}
By assumption all $Q_1,\dotsc,Q_d$ are tangent to $\{\varphi=0\}$ at $x_0$ and hence also all 
$Q\in\mathcal{L}$. From the remarks before Definition  \ref{sec:Uniq-def} and Theorem \ref{sec:Uniq-thm2}
 we see that all integral curves of the vector fields in $\mathcal{L}$ must be contained in $\partial\supp u$ 
 for a small neighbourhood of $x_0$. 
 Inspecting the construction of the representative of the local Nagano leaf in the proof of Theorem \ref{NaganoThm} we 
 see that $\gamma_{x_0}(\mathcal{L})\subseteq\supp u$ near $x_0$.
\end{proof}
In fact, we have the following global theorem (see for the analytic case Zachmanoglou \cite{MR0299925}, 
c.f.\ Bony \cite[Theorem 2.4.]{MR0474426})
\begin{Thm}\label{GlobalUniq}
Let $P$ an $\M$-admissable differential operator. 
If $u\in\D^\prime (\Omega)$ is a solution of $Pu=0$ and $p_0\notin\supp u$ then every integral curve of
the vector fields $Q_1,\dotsc, Q_d$ through $p_0$ stays in $\Omega\setminus\supp u$.
\end{Thm}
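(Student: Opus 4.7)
The plan is to argue by contradiction, reducing to a local application of Proposition \ref{sec:Uniq-prop2}. Suppose there exists an integral curve $\gamma\colon [0,T]\to\Omega$ of some $Q_{j_0}\in\{Q_1,\dotsc,Q_d\}$ with $\gamma(0)=p_0$ and $\gamma(t_1)\in\supp u$ for some $t_1>0$. Since $\Omega\setminus\supp u$ is open and $\gamma$ is continuous, the set $\{t\in[0,T]:\gamma(t)\notin\supp u\}$ is open in $[0,T]$ and contains $0$; its maximal subinterval starting at $0$ has the form $[0,t^*)$ with $0<t^*\leq t_1$, and I set $x_0:=\gamma(t^*)\in\partial\supp u$, so that $\gamma(t)\notin\supp u$ for all $0\leq t<t^*$.

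To invoke Proposition \ref{sec:Uniq-prop2} at $x_0$ I need a $\CC^1$ function $\varphi$, defined near $x_0$, satisfying $\varphi(x_0)=0$, $d\varphi(x_0)\neq 0$, $\supp u\subseteq\{\varphi\geq 0\}$ locally, and $(x_0,d\varphi(x_0))\in\Char P$. The characteristic-set condition is automatic: from $Pu=0$ the elliptic regularity theorem (Theorem \ref{elliptic-regThm}) yields $\WF_\M u\subseteq\Char P$, while Theorem \ref{sec:Uniq-thm2} gives $\overline{N(\supp u)}\subseteq\WF_\M u$. Hence any $\varphi$ making $(x_0,d\varphi(x_0))$ an exterior normal of $\supp u$ automatically fulfils $(x_0,d\varphi(x_0))\in\Char P$.

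Once such a $\varphi$ is produced, Proposition \ref{sec:Uniq-prop2} delivers $\gamma_{x_0}(\mathcal{L})\subseteq\supp u$ on a neighbourhood of $x_0$, where $\mathcal{L}$ is the Lie algebra generated by $Q_1,\dotsc,Q_d$. Because $Q_{j_0}\in\mathcal{L}$, the field $Q_{j_0}$ is tangent to $\gamma_{x_0}(\mathcal{L})$ throughout it, so a short backward arc $\gamma((t^*-\delta,t^*))$ remains inside $\gamma_{x_0}(\mathcal{L})$ and hence inside $\supp u$; this contradicts the choice of $t^*$.

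The main difficulty is the construction of the supporting function $\varphi$, since a general closed set need not possess a $\CC^1$ one-sided support at a boundary point. My plan is to work in coordinates that straighten $Q_{j_0}$ to $\partial/\partial x_n$ near $x_0$, so that $\gamma$ becomes the $x_n$-axis and the segment $\{0\}\times(-\delta,0)$ is known to lie outside $\supp u$; this already produces an admissible $\varphi$ in the generic situation where the tangent cone of $\supp u$ at $x_0$ is contained in a half-space. When it is not, I would fall back on the density of $\pi(N_e(\supp u))$ in $\partial\supp u$ (see \cite[Proposition 8.5.8]{MR1996773}) to pick boundary points $y_k\to x_0$ that do admit supporting functions, apply Proposition \ref{sec:Uniq-prop2} at each $y_k$, and pass to the limit: the inclusions $\gamma_{y_k}(\mathcal{L})\subseteq\supp u$ accumulate on a neighbourhood of $x_0$ within $\gamma_{x_0}(\mathcal{L})$, and in particular capture points of $\gamma((t^*-\delta,t^*))$, again contradicting the minimality of $t^*$.
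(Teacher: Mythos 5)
Your reduction to a first hitting point $x_0=\gamma(t^*)\in\partial\supp u$, the remark that $(x_0,d\varphi(x_0))\in\Char P$ is automatic for any supporting $\varphi$ (via Theorems \ref{elliptic-regThm} and \ref{sec:Uniq-thm2} and the homogeneity of $p_m$), and the final step that the backward arc $\gamma((t^*-\delta,t^*))$ stays in the local Nagano leaf through $x_0$ are all sound. The gap is exactly where you locate it, and neither fallback closes it. Proposition \ref{sec:Uniq-prop2} requires a $\CC^1$ function $\varphi$ with $d\varphi(x_0)\neq 0$ and $\supp u\subseteq\{\varphi\geq 0\}$ near $x_0$, and a closed set need not admit such a one-sided support at a prescribed boundary point (locally $\supp u$ could be a cusp or a union of two transversal hypersurfaces through $x_0$). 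Fallback (a) is insufficient even in the case it claims to cover: containment of the tangent cone in a closed half-space is necessary but not sufficient for the existence of a $\CC^1$ supporting function. Fallback (b) does not go through as stated: for $y_k\in\pi(N_e(\supp u))$ with $y_k\to x_0$, the sets $\gamma_{y_k}(\mathcal{L})$ are in general \emph{different} leaves of the Nagano foliation from $\gamma_{x_0}(\mathcal{L})$ (possibly of different dimension), they need not contain or accumulate on any point of the specific backward arc $\gamma((t^*-\delta,t^*))$ — they may all lie on the far side of $\supp u$ — and the neighbourhoods on which the local leaves are defined may shrink as $k\to\infty$. So no contradiction is reached at $x_0$.

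The paper's proof sidesteps the supporting-function problem entirely. It selects a point $q_0\in\partial\supp u$ on the global Nagano leaf through $p_0$ together with a vector field $X\in\mathcal{L}$ whose integral curve joins $q_0$ to a point $q_1\notin\supp u$, straightens $X$ to $\partial/\partial x_n$, and notes that the symbol of $X$ vanishes on $\Char P$, so that $p_m(x,\xi)\neq 0$ whenever $\xi_n\neq 0$. Then it runs Zachmanoglou's sweeping argument with the explicit real-analytic paraboloid $F(x)=(1+\delta)\lvert x'\rvert^2/r^2-\delta-x_n$: its level hypersurfaces are everywhere non-characteristic, the cap $U_1=\{F<-\delta/2\}$ meets $\supp u$ in a set relatively compact in $U$, and Corollary \ref{QuasiHolmgren2} forces $u=0$ on $U_1$, contradicting $q_0\in\partial\supp u$. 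This argument imposes no regularity on $\supp u$ whatsoever. If you wish to keep your curve-following setup, replace the appeal to Proposition \ref{sec:Uniq-prop2} at $x_0$ by this paraboloid sweep: your arc $\gamma$ already supplies both the direction to straighten and the nearby exterior point needed to position $F$.
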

\begin{proof}
Let $\Gamma=\Gamma_{p_0}(\mathcal{L})$ be the global Nagano leaf of $\mathcal{L}=\mathcal{L}(Q_1,\dotsc,Q_d)$ through $p_0$ 
and suppose that $\partial \supp u\cap \Gamma\neq\emptyset$.
Then there has to be a point $q_0\in\Gamma\cap\partial\supp u$ such that 
for all neighbourhoods $V\subseteq\Omega$ of $x_0$ we have
\begin{equation*}
\bigl(\Gamma\cap V\bigr)\cap \bigl(\Omega\setminus\supp u\bigr)\neq\emptyset.
\end{equation*}
Let $V$ be small enough such that $\Gamma\cap V$ is the representative of the local Nagano leaf of $\mathcal{L}$ 
at $q_0$
constructed in the proof of Theorem \ref{NaganoThm}. Then
\begin{equation*}
\Gamma\setminus\supp u\cap V\neq \emptyset.
\end{equation*}
Thence there is a vector field $X\in \mathcal{L}$ such that if $\gamma(t)=\exp tX$ is the integral curve 
of $X$ through $q_0$  then $\gamma(0)=q_0$ and $\gamma(1)=q_1\in V\!\setminus\!\supp u$.
Possibly shrinking $V$ and applying an ultradifferentiable coordinate change in $V$ we may assume that
 $q_0=0$, $q_1=(0,\dotsc,0,1)$ and 
\begin{equation*}
X=\frac{\partial}{\partial x_n}.
\end{equation*}
We note that in these new coordinates the assumption on $P$ can be stated in the following way.
Let $\xi\in\R^n$ with $\xi_n\neq 0$ then $p_m(x,\xi)\neq 0$ for all $x\in V$.
There is also a neighbourhood $V_1\subseteq V$ of $q_1$ such that $u$ vanishes on $V_1$.

We adapt the proof of Zachmanoglou \cite[Theorem 1]{MR0240442}. Let $r>0$ and $\delta>0$ be small enough so that
\begin{equation*}
U=\bigl\{x\in\R^n\mid \lvert x^\prime\rvert <r,\;-\delta<x_n<1\bigr\}
\end{equation*}
is contained in $V$ and 
\begin{equation*}
\bigl\{x\in\R^n\mid \lvert x^\prime\rvert<r,\;x_n=1\bigr\}\subseteq V_1.
\end{equation*}
We consider the real-analytic function
\begin{equation*}
F(x)=(1+\delta)\frac{\vert x^\prime\rvert^2}{r^2}- \delta -x_n.
\end{equation*}
The normals of the level hypersurfaces of $F$ are always nonzero in the direction of the $n$-th unit vector.
It follows that the level hypersurfaces are everywhere non-characteristic with respect to $P$ in $V$.
Set
\begin{equation*}
U_1=\biggl\{x\in U:\; F(x) < -\frac{\delta}{2}\biggr\}
\end{equation*}
and note that if $x\in U_1$ then $x_n>-\delta /2$. 
It is easy to see that
$U_1\cap\supp u$ is relatively compact in $U$. 
We conclude that $u=0$ in $U_1$ by Corollary \ref{QuasiHolmgren2}.
That is a contradiction to the assumption $q_0\in\partial\supp u$.
\end{proof}
If $Q_1,\dotsc,Q_d$ are real valued vector fields with $\E_\M$-coefficients, then the operators
\begin{align*}
P_0&=Q_1+iQ_2\\
P_k&=\sum_{j=1}^d Q_j^{2k}\qquad k\in\N
\end{align*}
are $\M$-admissible.

For our last result we need to recall the notion of finite type which was introduced by H{\"o}rmander \cite{Hoermander1967}. 
We say a collection of smooth real vector fields $X_1,\dotsc , X_d$ on $\Omega$ 
is of finite type (of length at most $r$)  if at any point $p\in\Omega $ the tangent space $T_p\Omega$
is generated by $X_j(p)$ and some iterated commutators 
$[X_{i_1},[X_{i_2},[\dotsc,[X_{i_{q-1}},X_{i_q}]\dotsc]]](p)$, where $q\leq r$.

A straightforward application of Theorem \ref{GlobalUniq} gives the following corollary.
\begin{Cor}
Let $\Omega$ be connected and assume that the real vector fields 
$X_1,\dotsc,X_d$ are of class $\{\M\}$ and of finite type and 
let $u\in\D^\prime(\Omega)$ be a solution of $P_ku=0$. If $u$ vanishes on an open subset of $\Omega$ then $u\equiv 0$ in $\Omega$.
\end{Cor}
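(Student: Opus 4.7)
The plan is to apply Theorem \ref{GlobalUniq} together with a standard accessibility argument. First I verify that $P_k$ is $\M$-admissible in the sense of Definition \ref{sec:Uniq-def}. Denoting by $q_j(x,\xi)$ the principal symbol of $X_j$, the principal symbol of $P_k=\sum_{j=1}^d X_j^{2k}$ is
\begin{equation*}
p_{2k}(x,\xi)=\sum_{j=1}^d q_j(x,\xi)^{2k}.
\end{equation*}
Since the $q_j$ are real-valued, $p_{2k}(x,\xi)=0$ if and only if $q_j(x,\xi)=0$ for every $j=1,\dotsc,d$. In particular each $q_j$ vanishes on $\Char P_k$, so one may take $Q_j=X_j$ in Definition \ref{sec:Uniq-def}; this makes $P_k$ an $\M$-admissible operator. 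Note that the statement of Theorem \ref{GlobalUniq} was already extended to vector-valued distributions in the remark preceding it, so the argument applies here.

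Next, let $\mathcal{U}=\Omega\setminus\supp u$. By hypothesis $u$ vanishes on some nonempty open set, so $\mathcal{U}\neq\emptyset$; pick $p_0\in\mathcal{U}$. Theorem \ref{GlobalUniq} applied to $P_k$ guarantees that every integral curve of each $X_j$ through a point of $\mathcal{U}$ remains inside $\mathcal{U}$. Iterating this statement along concatenations of integral curves of $X_1,\dotsc,X_d$, one sees that $\mathcal{U}$ contains the entire accessibility set of $p_0$ under the flows of these vector fields.

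The finite-type hypothesis is precisely H\"ormander's bracket-generating condition on $X_1,\dotsc,X_d$. By the classical Chow--Rashevskii theorem, the accessibility set of any point under such a family of vector fields coincides with the connected component of the ambient manifold containing that point. Since $\Omega$ is connected, this accessibility set is all of $\Omega$; hence $\mathcal{U}=\Omega$, and $u\equiv 0$ on $\Omega$.

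The only nontrivial ingredient beyond Theorem \ref{GlobalUniq} is the accessibility result; alternatively one can argue directly via Theorem \ref{globalNaganothm}. Indeed, if $\mathfrak{g}$ denotes the Lie subalgebra of $\mathfrak{X}_\M(\Omega)$ generated by $X_1,\dotsc,X_d$ together with all multiplications by elements of $\E_\M(\Omega)$, the finite-type assumption forces $\mathfrak{g}(p)=T_p\Omega$ at every $p\in\Omega$. Consequently the Nagano foliation of $\Omega$ provided by Theorem \ref{globalNaganothm} reduces to the single leaf $\Omega$, and every point of $\Omega$ is reached from $p_0$ by a concatenation of integral curves of the $X_j$. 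Either viewpoint yields the desired conclusion $u\equiv 0$.
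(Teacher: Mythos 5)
Your proof is correct and follows exactly the route the paper intends: the paper itself records (just before the corollary) that $P_k$ is $\M$-admissible, and then declares the corollary a ``straightforward application'' of Theorem \ref{GlobalUniq}, which is precisely your combination of that theorem with the finite-type/orbit argument (Chow--Rashevskii, or equivalently the Nagano foliation collapsing to the single leaf $\Omega$). Nothing is missing.
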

\bibliographystyle{plain}
\bibliography{UltraRef1}
\end{document}